\documentclass[a4,12pt]{amsart}
\usepackage{amsmath,amssymb,amsthm} 
\usepackage[all]{xy} 
\usepackage{enumerate} 
\usepackage{indentfirst} 
\usepackage{soul}
\numberwithin{equation}{section} 
\usepackage[pagebackref=true]{hyperref} 
\renewcommand*{\backref}[1]{} 
\renewcommand*{\backrefalt}[4]{
  \ifcase #1 
    (not cited)
  \or
    (cited on p.~#2)
  \else
    (cited on pp.~#2)
  \fi}
\def\Hom{\operatorname{Hom}} 
\def\End{\operatorname{End}} 
\def\soc{\operatorname{soc}} 
\def\top{\operatorname{top}} 
\def\Im{\operatorname{Im}} 
\def\GL{\operatorname{GL}} 
\def\Stab{\operatorname{Stab}} 
\newcommand{\Mat}{\operatorname{Mat}}
\def\add{\operatorname{\mathsf{add}}} 
\def\mod{\operatorname{\mathsf{mod}}} 
\def\Gr{\mathrm{Gr}} 
\def\Aut{\mathrm{Aut}} 
\newtheorem{theorem}{Theorem}[section] 
\newtheorem{theorem*}{Theorem} 
\newtheorem{corollary}[theorem]{Corollary} 
\newtheorem{corollary*}[theorem*]{Corollary} 
\newtheorem{lemma}[theorem]{Lemma} 
\newtheorem{proposition}[theorem]{Proposition} 
\theoremstyle{definition} 
\newtheorem{definition}[theorem]{Definition} 
\newtheorem{remark}[theorem]{Remark} 
\newtheorem*{question*}{Question} 
\newtheorem{conjecture}[theorem]{Conjecture} 
\newtheorem*{conjecture*}{Conjecture} 
\newtheorem{example}[theorem]{Example} 
\newtheorem*{notation*}{Notation} 
\newtheorem*{claim*}{Claim} 

\begin{document}
\setlength{\baselineskip}{17pt}

\title{On bricks of one-point extension algebras}
\begin{abstract}
Let $\Lambda=B[E]$ be the one-point extension algebra of $B$ by the extension module $E$. Using the standard triple description of $\Lambda$-modules, we characterize mixed bricks through an injectivity condition and a scalar-stabilizer condition. Under the assumption that $B$ is $E$-visible finite, we interpret mixed bricks as orbits in Grassmannians and derive a necessary and sufficient criterion for the brick-finiteness of $\Lambda$.
\end{abstract}

\author{Qi Wang}
\email{wang2025@dlut.edu.cn (cc:infinite-wang@outlook.com)}
\address{School of Mathematical Sciences, Dalian University of Technology, Dalian City, Liaoning Province 116024, China}

\thanks{2020 {\em Mathematics Subject Classification.}
Primary 16G20; Secondary 16G60, 14M15}

\keywords{One-point extensions, brick-finiteness, Grassmannians, brick--Brauer--Thrall conjecture}

\date{July 31, 2026.}
\maketitle

\section{Introduction}
Bricks, also known as Schur representations, have long occupied a fundamental position in the representation theory of finite-dimensional algebras. They may be regarded as relative analogues of simple modules: bricks occur as simple objects in wide subcategories, and filtrations by suitably ordered bricks provide a way of decomposing modules that is adapted to Hom-orthogonality and torsion-theoretic structures; see \cite{Asai-semibricks, BTZ-brick, Ringel-bricks, Ringel-brick-chain-filtrations}. Their role became particularly prominent with the development of $\tau$-tilting theory \cite{Adachi-Iyama-Reiten-2014}. A fundamental result in \cite{Demonet-Iyama-Jasso-2019} characterizes $\tau$-tilting finite algebras in terms of the finiteness of bricks and connects this condition with the structure of torsion classes in the module category. Hence, brick-finiteness is not merely a restricted form of representation-finiteness, but a natural finiteness property lying at the intersection of $\tau$-tilting theory, torsion theory, silting theory, etc. These developments have also led to brick analogues of the classical Brauer--Thrall problems; see \cite{Mousavand-minimal-tau-infinite, Mousavand-Paquette-survey, Schroll-Treffinger-first-BT, Schroll-Treffinger-Valdivieso-band}. Although brick-finiteness has been determined for various important classes of algebras, there is currently no general and computationally effective criterion for deciding whether an arbitrary finite-dimensional algebra is brick-finite.

In this paper, we study brick-finiteness for one-point extension algebras. Throughout, $\Bbbk$ denotes an algebraically closed field, and $\mod A$ denotes the category of finite-dimensional right $A$-modules for a finite-dimensional $\Bbbk$-algebra $A$. Let $B$ be a finite-dimensional $\Bbbk$-algebra and let $E={}_{\Bbbk}E_B$ be a finite-dimensional $\Bbbk$-$B$-bimodule. We denote by $\Lambda=B[E]$ the one-point extension of $B$ by $E$, and we write $1_{\Lambda}=e_B+e_0$, where
\[
B[E]:=
\begin{pmatrix}
B&0\\
E&\Bbbk
\end{pmatrix},
\quad
e_B:=
\begin{pmatrix}
1_B&0\\
0&0
\end{pmatrix},
\quad
e_0:=
\begin{pmatrix}
0&0\\
0&1
\end{pmatrix}.
\]
One-point extensions provide one of the most basic procedures for constructing new algebras from known ones: at the level of ordinary quivers, they amount to adjoining a new source together with arrows determined by the top of $E$. They have been extensively used in the study of representation type, Auslander--Reiten components, tilted and quasitilted algebras, and related enlargement procedures; see, for instance, \cite{de-la-Pena-one-point,Happel-Slungard-one-point,SS-Vol-3}. From a computational point of view, it is therefore natural to ask whether the brick-finiteness of $\Lambda$ can be detected from the representation theory of $B$ and from concrete linear-algebraic data attached to $E$. The purpose of this paper is to develop such a framework.

Every right $\Lambda$-module can be described by a triple $M=(N,W,\eta)$, where $N\in\mod B$, $W$ is a finite-dimensional $\Bbbk$-vector space, and $\eta\colon W\longrightarrow\Hom_B(E,N)$ is a $\Bbbk$-linear map. If $W=0$, then $(N,0,0)$ is a brick in $\mod \Lambda$ if and only if $N$ is a brick in $\mod B$. Besides, the only brick with $N=0$ is the simple module $(0,\Bbbk,0)$ supported at the extension vertex. Thus, the new bricks arising from the one-point extension are the \emph{mixed bricks}, namely, the bricks $(N,W,\eta)$ with $N\neq0$ and $W\neq0$. Our first result gives a concrete criterion for a mixed module to be a brick.

\begin{theorem*}[Proposition~\ref{prop::mixed-brick-criterion}]
Let $M=(N,W,\eta)\in\mod \Lambda$ with $N\neq0$ and $W\neq0$. Then, $M$ is a mixed brick if and only if $\eta$ is injective and
\[
\Stab_{\End_B(N)}(\Im\eta)
:=
\left\{
f\in\End_B(N)
\ \middle|\
f_*(\Im\eta)\subseteq\Im\eta
\right\}
=
\Bbbk\operatorname{id}_{N},
\]
where $f_*\colon\Hom_B(E,N)\longrightarrow\Hom_B(E,N)$ is the map induced by post-composition.
\end{theorem*}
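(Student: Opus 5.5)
We describe $\End_\Lambda(M)$ explicitly in terms of triple data and then read off both conditions. A morphism $(N,W,\eta)\to(N',W',\eta')$ is a pair $(f,g)$ with $f\in\Hom_B(N,N')$ and $g\colon W\to W'$ $\Bbbk$-linear such that $\eta'\circ g=f_*\circ\eta$. So we first identify
\[
\End_\Lambda(M)=\{(f,g)\in\End_B(N)\times\End_\Bbbk(W)\mid \eta\circ g=f_*\circ\eta\},
\]
with composition componentwise. We take this description as the standard one that the triple language encodes: right $\Lambda$-modules $M$ restrict to $N=Me_B$ and $W=Me_0$, and the $E$-action gives $W\otimes_\Bbbk E\to N$, adjoint to $\eta$. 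Naturality of the adjunction then translates $\Lambda$-linearity into the commuting square above.

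For necessity, assume $M$ is a brick with $N\neq0\neq W$. If $\eta$ were not injective, pick $0\neq w\in\ker\eta$ and a nonzero $g\colon W\to W$ with image in $\Bbbk w$, for instance a rank-one map $W\to\Bbbk w$. Then $(0,g)$ is an endomorphism, since $\eta g=0=0_*\eta$. It is nonzero and nilpotent (or at least not a scalar, because $N\neq0$ forces scalars to act on $N$ too), which is a contradiction. So $\eta$ is injective. Now let $f\in\Stab_{\End_B(N)}(\Im\eta)$. Since $\eta$ is an isomorphism onto $\Im\eta$ and $f_*$ preserves $\Im\eta$, define $g:=\eta^{-1}\circ f_*|_{\Im\eta}\circ\eta$. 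Then $(f,g)\in\End_\Lambda(M)$, so $(f,g)=\lambda(\mathrm{id},\mathrm{id})$ and $f=\lambda\,\mathrm{id}_N$. The reverse inclusion $\Bbbk\,\mathrm{id}_N\subseteq\Stab$ is trivial.

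For sufficiency, assume $\eta$ is injective and the stabilizer is $\Bbbk\,\mathrm{id}_N$. Take $(f,g)\in\End_\Lambda(M)$. Then $f_*(\Im\eta)=\Im(\eta g)\subseteq\Im\eta$, so $f=\lambda\,\mathrm{id}_N$. Hence $\eta g=\lambda\eta$, and injectivity of $\eta$ gives $g=\lambda\,\mathrm{id}_W$. Thus $\End_\Lambda(M)=\Bbbk$. Here we use $N\neq0$ so that $\lambda$ is well defined, and $M\neq0$.

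The only point requiring care is the identification of morphisms of triples with $\Lambda$-homomorphisms, in particular the direction and naturality of the adjunction $\Hom_\Bbbk(W\otimes E,N)\cong\Hom_\Bbbk(W,\Hom_B(E,N))$. I expect this to be routine given the paper's setup. Everything else is elementary linear algebra.
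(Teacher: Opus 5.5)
Your proof is correct and follows the paper's argument. The $(0,g)$ construction with $g$ of rank one into $\ker\eta$ is exactly Lemma~\ref{lem:eta-injective}, and your two directions unpack the paper's algebra isomorphism $\End_\Lambda(M)\cong\Stab_{\End_B(N)}(\Im\eta)$, valid once $\eta$ is injective. One small point: your parenthetical claim that $(0,g)$ is nilpotent can fail, for instance when $\dim_{\Bbbk}W=1$, but your fallback argument, that $(0,g)$ is not a scalar because $N\neq0$, is sufficient.
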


To make this criterion computationally effective, we call an indecomposable $B$-module $X$ \emph{$E$-visible} if $\Hom_B(E,X)\neq0$, and we say that $B$ is \emph{$E$-visible finite}\footnote{Several constructions developed in this paper can also be formulated when $B$ is not $E$-visible finite. However, in order to keep the notation uniform, we work throughout under the assumption that $B$ is $E$-visible finite.} if there are only finitely many isomorphism classes of $E$-visible indecomposable $B$-modules. Let $X_1,X_2,\ldots,X_t$ be a complete list of representatives of the isomorphism classes of $E$-visible indecomposable $B$-modules. It follows from Corollary~\ref{cor::mixed-brick-summand} that the $B$-module part $N$ of every mixed brick $M=(N,W,\eta)\in\mod \Lambda$ belongs to $\add(X_1\oplus X_2\oplus\cdots\oplus X_t)$. Hence, by the Krull--Schmidt theorem, 
\[
N\cong N_{\mathbf{m}} := \bigoplus_{i=1}^tX_i^{m_i}
\]
for some nonzero multiplicity vector $\mathbf{m}=(m_1,m_2,\ldots,m_t)\in\mathbb{Z}_{\geq 0}^t$. We define 
\[
h(\mathbf{m}) :=\dim_{\Bbbk}\Hom_B(E,N_{\mathbf{m}})\quad \text{and} \quad r(\mathbf{m}) := \dim_{\Bbbk}\End_B(N_{\mathbf{m}}).
\]

For a nonzero vector $\mathbf{m}\in\mathbb{Z}_{\geq 0}^t$ and an integer $1\leq\ell\leq h(\mathbf{m})$, consider the Grassmannian $\Gr_\ell\bigl(\Hom_B(E,N_{\mathbf{m}})\bigr)$. The algebraic group $\Aut_B(N_{\mathbf{m}})$ acts on this Grassmannian by post-composition. After identifying $W$ with $\Im\eta$, mixed bricks are controlled by pairs $(N,L)$ with $0\neq L\subseteq\Hom_B(E,N)$ such that the only endomorphisms of $N$ preserving $L$ are scalar multiples of the identity. We then define
\[
\mathcal{L}(\mathbf{m},\ell)
:=
\left\{
L\in\Gr_\ell\bigl(\Hom_B(E,N_{\mathbf{m}})\bigr)
\ \middle|\
\Stab_{\End_B(N_{\mathbf{m}})}(L)
=
\Bbbk\operatorname{id}_{N_{\mathbf{m}}}
\right\}.
\]
Our second result gives the geometric interpretation of mixed bricks.

\begin{theorem*}[Theorem~\ref{thm:dim-conditions}, Lemma~\ref{lem::Lml-open} and Proposition~\ref{prop::mixed-bricks-orbits}]
Suppose that $B$ is $E$-visible finite, $0\neq\mathbf{m}\in\mathbb{Z}_{\geq 0}^t$ and $1\leq\ell\leq h(\mathbf{m})$. Then, the following statements hold.
\begin{enumerate}

\item If $\mathcal{L}(\mathbf{m},\ell)\neq\varnothing$, then $r(\mathbf{m})-1
\leq
\ell\bigl(h(\mathbf{m})-\ell\bigr)$. 

\item The subset $\mathcal{L}(\mathbf{m},\ell) \subseteq \Gr_\ell\bigl(\Hom_B(E,N_{\mathbf{m}})\bigr)$ is Zariski-open.

\item The isomorphism classes of mixed bricks $M=(N,W,\eta)$ satisfying $N\cong N_{\mathbf{m}}$ and $\dim_{\Bbbk}W=\ell$ are in bijection with the $\Aut_B(N_{\mathbf{m}})$-orbits in $\mathcal{L}(\mathbf{m},\ell)$.
\end{enumerate}
\end{theorem*}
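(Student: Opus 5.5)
We take the first theorem above (Proposition~\ref{prop::mixed-brick-criterion}) as given and work with the group $G=\Aut_B(N_{\mathbf{m}})$ acting on $\mathbb{G}=\Gr_\ell\bigl(\Hom_B(E,N_{\mathbf{m}})\bigr)$ by $f\cdot L=f_*(L)$. The plan is to prove (3) first, since it fixes the dictionary, then (2), then (1). Part (1) is a dimension count using orbits, and part (2) is a semicontinuity argument.

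For (3), we send a mixed brick $M=(N,W,\eta)$ with $N\cong N_{\mathbf{m}}$ and $\dim W=\ell$ to $L=\phi_*(\Im\eta)$, where $\phi\colon N\to N_{\mathbf{m}}$ is a chosen isomorphism. By the criterion, $\eta$ is injective, so $\dim L=\ell$. Conjugating by $\phi$ shows $\Stab(L)=\Bbbk\operatorname{id}$, so $L\in\mathcal{L}(\mathbf{m},\ell)$. Changing $\phi$ moves $L$ within its $G$-orbit.

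A morphism of triples $(N,W,\eta)\to(N',W',\eta')$ is a pair $(f,g)$ with $f_*\eta=\eta' g$. Hence an isomorphism of mixed bricks induces an isomorphism $f$ of the $B$-parts with $f_*(\Im\eta)=\Im\eta'$. Conversely, suppose $f\in G$ carries $\Im\eta$ onto $\Im\eta'$. Injectivity of $\eta$ and $\eta'$ lets us define $g=\eta'^{-1}f_*\eta$, so $(f,g)$ is an isomorphism. The inverse map takes $L$ to the triple $(N_{\mathbf{m}},L,\mathrm{incl})$, which is a brick by the criterion. This gives a well-defined bijection between isomorphism classes of mixed bricks and orbits.

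For (2), consider the incidence set $Z=\{(f,L): f_*(L)\subseteq L\}\subseteq\End_B(N_{\mathbf{m}})\times\mathbb{G}$. It is closed, and each fibre $Z_L=\Stab(L)$ is a linear subspace containing $\Bbbk\operatorname{id}$. So $\mathcal{L}(\mathbf{m},\ell)=\{L:\dim\Stab(L)\le1\}$. To show this is open, work on a standard affine chart of $\mathbb{G}$, where $L$ is the row space of a matrix $[I\mid A]$. The condition $f_*(L)\subseteq L$ is then a system of linear equations in $f$ whose coefficients are polynomial in $A$. Explicitly, if $\pi_L$ is the projection onto the complement along the chart, the condition is $\pi_L\circ f_*|_L=0$. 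Thus $\dim\Stab(L)=r(\mathbf{m})-\operatorname{rank}(\text{a matrix polynomial in }A)$. The condition $\dim\Stab(L)\le1$ means that the rank is at least $r(\mathbf{m})-1$, which is the nonvanishing of some minor and hence an open condition. Openness is local, so this proves (2).

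For (1), take $L\in\mathcal{L}(\mathbf{m},\ell)$. The stabilizer of $L$ in $G$ is $\Stab(L)\cap G=\Bbbk^\times\operatorname{id}$. The group $G$ is open in the vector space $\End_B(N_{\mathbf{m}})$, so $\dim G=r(\mathbf{m})$. Therefore the orbit $G\cdot L$ has dimension $r(\mathbf{m})-1$, and it is a subvariety of $\mathbb{G}$, which has dimension $\ell(h(\mathbf{m})-\ell)$. The inequality follows. This can also be done by a tangent-space argument: the map $\End_B(N_{\mathbf{m}})\to T_L\mathbb{G}=\Hom(L,\Hom_B(E,N_{\mathbf{m}})/L)$ has kernel $\Stab(L)$.

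We expect the main obstacle to be (2), namely justifying that the fibre dimension of $Z$ is upper semicontinuous, carefully and in coordinates. We will handle it by the explicit chart computation described above rather than citing a general theorem.
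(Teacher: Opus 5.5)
Your proposal is correct and follows essentially the same route as the paper. Part (3) transports $\Im\eta$ along an isomorphism $N\cong N_{\mathbf{m}}$ and uses Proposition~\ref{prop::iso-orbit}. Part (2) uses the linear map $f\mapsto \pi\circ f_*|_L$ on a standard chart, which is the paper's $\Psi_a$, together with the rank condition of Example~\ref{ex::open-set}. Part (1) is the orbit--stabilizer dimension count with $\Stab_{\Aut_B(N_{\mathbf{m}})}(L)=\Bbbk^{\times}\operatorname{id}$. Your rank--nullity alternative for (1) is the paper's tangent-map remark, and it gives the inequality by linear algebra alone.
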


We call a pair $(\mathbf{m},\ell)$ \emph{admissible} if $r(\mathbf{m})-1 \leq \ell\bigl(h(\mathbf{m})-\ell\bigr)$. An admissible pair is called \emph{critical} if the equality holds and \emph{strict} otherwise. We say that $(\mathbf{m},\ell)$ is \emph{realized} if $\mathcal{L}(\mathbf{m},\ell)\neq\varnothing$. For every $L\in\mathcal{L}(\mathbf{m},\ell)$, the left-hand side of the inequality is the dimension of the orbit $\Aut_B(N_{\mathbf{m}})\cdot L$, whereas the right-hand side is the dimension of the Grassmannian. The critical and strict admissible pairs exhibit sharply different geometric
behavior.

\begin{theorem*}[Theorem~\ref{thm::critical-pair-one-orbit}, Propositions~\ref{prop:strict-infinite-bricks}
and~\ref{prop:strict-second-bbt}]
Suppose that $B$ is $E$-visible finite, and let $(\mathbf{m},\ell)$ be an admissible pair.
\begin{enumerate}
\item If $(\mathbf{m},\ell)$ is critical, then $\mathcal{L}(\mathbf{m},\ell)$ is either empty or a single $\Aut_B(N_{\mathbf{m}})$-orbit. Consequently, a critical admissible pair is realized by at most one isomorphism class of mixed bricks in $\mod \Lambda$.

\item If $(\mathbf{m},\ell)$ is strict and realized, then $\mathcal{L}(\mathbf{m},\ell)$ contains infinitely many $\Aut_B(N_{\mathbf{m}})$-orbits. Consequently, $\Lambda$ admits infinitely many pairwise nonisomorphic mixed bricks realizing $(\mathbf{m},\ell)$, all having the same dimension vector $\bigl(\underline{\dim}_B N_{\mathbf{m}},\ell\bigr)$. 
\end{enumerate}
\end{theorem*}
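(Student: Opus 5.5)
The plan is to work entirely inside the Grassmannian $G:=\Gr_\ell\bigl(\Hom_B(E,N_{\mathbf{m}})\bigr)$, which is smooth, irreducible and projective of dimension $\ell(h(\mathbf{m})-\ell)$. By part (2) of the preceding theorem (Lemma~\ref{lem::Lml-open}), $\mathcal{L}(\mathbf{m},\ell)$ is Zariski-open in $G$, so if it is nonempty it is dense and irreducible of dimension $\ell(h(\mathbf{m})-\ell)$. It is also $\Aut_B(N_{\mathbf{m}})$-stable: for $g\in\Aut_B(N_{\mathbf{m}})$ we have $\Stab(g_*L)=g\,\Stab(L)\,g^{-1}$, so the scalar-stabilizer condition is preserved. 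The first step is therefore to compute the orbit dimension of any $L\in\mathcal{L}(\mathbf{m},\ell)$.

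The stabilizer of $L$ in $\Aut_B(N_{\mathbf{m}})$ is $\Stab_{\End_B(N_{\mathbf{m}})}(L)\cap\Aut_B(N_{\mathbf{m}})=\Bbbk^\times\operatorname{id}$. Here $\Aut_B(N_{\mathbf{m}})$ is the open subset of units in the $r(\mathbf{m})$-dimensional algebra $\End_B(N_{\mathbf{m}})$, hence a connected algebraic group of dimension $r(\mathbf{m})$. Consequently
\[
\dim\bigl(\Aut_B(N_{\mathbf{m}})\cdot L\bigr)=r(\mathbf{m})-1 .
\]
This is the same computation that underlies part (1) of the preceding theorem (Theorem~\ref{thm:dim-conditions}). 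Every orbit in $\mathcal{L}(\mathbf{m},\ell)$ is irreducible, because the group is connected, and is locally closed.

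For (1), assume the pair is critical and $\mathcal{L}(\mathbf{m},\ell)\neq\varnothing$. Then each orbit $O\subseteq\mathcal{L}(\mathbf{m},\ell)$ has dimension $\ell(h(\mathbf{m})-\ell)=\dim G$. Since $O$ is irreducible, its closure $\overline{O}$ is an irreducible closed subset of $G$ of full dimension, so $\overline{O}=G$. An orbit is open in its closure, so $O$ is a nonempty open subset of $G$. Two distinct orbits would be disjoint nonempty open subsets of the irreducible space $G$, which is impossible. Hence $\mathcal{L}(\mathbf{m},\ell)$ is a single orbit. The consequence about mixed bricks then follows from the bijection in part (3) of the preceding theorem (Proposition~\ref{prop::mixed-bricks-orbits}).

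For (2), assume the pair is strict and realized. Then $\mathcal{L}(\mathbf{m},\ell)$ is irreducible of dimension $\ell(h(\mathbf{m})-\ell)$, while every orbit in it has dimension $r(\mathbf{m})-1<\ell(h(\mathbf{m})-\ell)$. If there were only finitely many orbits, $\mathcal{L}(\mathbf{m},\ell)$ would be a finite union of locally closed subsets, each of dimension strictly smaller than its own. This contradicts the fact that the dimension of a finite union is the maximum of the dimensions. So there are infinitely many orbits, and Proposition~\ref{prop::mixed-bricks-orbits} turns them into infinitely many pairwise nonisomorphic mixed bricks $(N_{\mathbf{m}},L,\text{incl})$. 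Their dimension vector is $(\underline{\dim}_BN_{\mathbf{m}},\ell)$, since $W\cong L$ has dimension $\ell$.

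The main point needing care is the orbit-dimension step: identifying the stabilizer of $L$ in the group with $\Bbbk^\times$, and using the connectedness of $\Aut_B(N_{\mathbf{m}})$ so that orbits are irreducible. Everything else is standard algebraic geometry of group actions on irreducible varieties.
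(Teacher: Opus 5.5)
Your proposal is correct and follows essentially the same route as the paper: you use the Zariski-open, hence irreducible, locus $\mathcal{L}(\mathbf{m},\ell)$ of dimension $\ell(h(\mathbf{m})-\ell)$, and the orbit–stabilizer computation $\dim(\Aut_B(N_{\mathbf{m}})\cdot L)=r(\mathbf{m})-1$. You then compare dimensions, getting a unique open orbit in the critical case and a finite union of lower-dimensional orbits that cannot cover $\mathcal{L}(\mathbf{m},\ell)$ in the strict case. You are also more explicit than the paper about why a full-dimensional orbit is open: its closure is all of the irreducible Grassmannian, a step the paper leaves implicit.
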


We obtain a necessary and sufficient condition for the finiteness of mixed bricks as below.

\begin{theorem*}[Theorem~\ref{thm:mixed-finite}]
Suppose that $B$ is $E$-visible finite. Then, the following statements are equivalent.
\begin{enumerate}
\item There are only finitely many isomorphism classes of mixed bricks in $\mod \Lambda$.

\item There are only finitely many realized admissible pairs, and all are critical.
\end{enumerate}
If these equivalent conditions hold, then $\Lambda$ is brick-finite if and only if $B$ is brick-finite.
\end{theorem*}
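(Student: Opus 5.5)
We combine the three preceding results. Every mixed brick $M=(N,W,\eta)$ has $N\cong N_{\mathbf m}$ for a unique nonzero $\mathbf m$ (Corollary~\ref{cor::mixed-brick-summand} and Krull--Schmidt), and $\eta$ is injective (Proposition~\ref{prop::mixed-brick-criterion}). Hence $\ell=\dim_\Bbbk W$ satisfies $1\le \ell\le h(\mathbf m)$. Moreover $L=\Im\eta$ lies in $\mathcal L(\mathbf m,\ell)$, so the pair $(\mathbf m,\ell)$ is realized and therefore admissible by Theorem~\ref{thm:dim-conditions}(1). This gives a decomposition
\[
\{\text{mixed bricks}\}/{\cong}\;=\;\bigsqcup_{(\mathbf m,\ell)\ \text{realized}}\{\text{mixed bricks of type }(\mathbf m,\ell)\}/{\cong},
\]
and by Proposition~\ref{prop::mixed-bricks-orbits} each piece is in bijection with the set of $\Aut_B(N_{\mathbf m})$-orbits in $\mathcal L(\mathbf m,\ell)$.

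For (2)$\Rightarrow$(1), note that finitely many realized pairs, all critical, contribute at most one isomorphism class each by Theorem~\ref{thm::critical-pair-one-orbit}. Hence there are only finitely many mixed bricks in total.

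For (1)$\Rightarrow$(2), argue contrapositively.
\begin{enumerate}
\item If some realized pair is strict, Proposition~\ref{prop:strict-infinite-bricks} produces infinitely many pairwise nonisomorphic mixed bricks.
\item If infinitely many pairs are realized, each contributes at least one mixed brick. Bricks of distinct types are nonisomorphic, because the type $(\mathbf m,\ell)$ is an isomorphism invariant. To see this, an isomorphism of triples induces $N\cong N'$, which fixes $\mathbf m$ by Krull--Schmidt, and $W\cong W'$, which fixes $\ell$.
\end{enumerate}
In either case (1) fails. Stating this invariance cleanly is the only point requiring care, and it follows from the description of morphisms of triples as pairs $(\phi_N,\phi_W)$ compatible with $\eta$.

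For the final assertion, recall from the introduction that every brick of $\Lambda$ is of one of three kinds: $(N,0,0)$ with $N$ a $B$-brick, the simple module $(0,\Bbbk,0)$, or a mixed brick. Under the equivalent conditions the mixed bricks are finite in number. So $\Lambda$ is brick-finite if and only if the set of bricks $(N,0,0)$ is finite. These correspond bijectively to bricks of $B$, since $(N,0,0)\cong(N',0,0)$ if and only if $N\cong N'$. Thus $\Lambda$ is brick-finite if and only if $B$ is.
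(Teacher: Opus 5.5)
Your proposal is correct and follows essentially the same route as the paper. You classify mixed bricks by their realized type $(\mathbf m,\ell)$ and the $\Aut_B(N_{\mathbf m})$-orbits in $\mathcal L(\mathbf m,\ell)$ (Proposition~\ref{prop::mixed-bricks-orbits}), use Theorem~\ref{thm::critical-pair-one-orbit} for (2)$\Rightarrow$(1) and Proposition~\ref{prop:strict-infinite-bricks} for (1)$\Rightarrow$(2), and finish with the three-way classification of bricks of $\Lambda$; your explicit check that the type $(\mathbf m,\ell)$ is an isomorphism invariant, which is needed to deduce from (1) that only finitely many pairs are realized, is a step the paper leaves implicit.
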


Now the problem of determining brick-finiteness for a one-point extension separates naturally into two parts: a numerical problem governed by the functions $h(\mathbf{m})$ and $r(\mathbf{m})$, and a geometric realization problem governed by the Zariski-open subsets $\mathcal{L}(\mathbf{m},\ell)$ of the corresponding Grassmannians. In particular, every realized strict admissible pair produces infinitely many pairwise nonisomorphic mixed bricks with one fixed dimension vector, as predicted by the second brick--Brauer--Thrall conjecture.

Geometric ideas of this kind have also appeared in several earlier approaches to the representation theory of finite-dimensional algebras. Framed representation moduli have been realized as Grassmannians of submodules \cite{FedotovFramedModuli}, while Grassmannian parameter spaces have been used more broadly in the classification of modules \cite{HuisgenZimmermannGrassmannians}. Particularly relevant is the study of moduli spaces of stable representations of one-point extensions \cite{DonmezReinekeOnePointModuli}, among other related works.

The paper is organized as follows. In Section~\ref{sec::preliminaries}, we recall the required facts on Zariski topology, projective algebraic varieties, Grassmannians, and one-point extension algebras. In Section~\ref{Section-3}, we introduce $E$-visible modules, admissible pairs, and the numerical invariants $h(\mathbf{m})$ and $r(\mathbf{m})$. We prove the dimension inequality for mixed bricks, study the geometry of the loci $\mathcal{L}(\mathbf{m},\ell)$, establish the critical--strict dichotomy, and characterize the finiteness of admissible pairs in terms of the positivity of a quadratic form. In Section~\ref{Section-4}, we apply these results to the brick-finiteness of one-point extensions. We give examples showing that strict admissible pairs need not be realized, derive structural restrictions on the extension module $E$, and discuss the connection with the second brick--Brauer--Thrall conjecture. In Section~\ref{Section-5}, we develop four applications of the general framework. We treat one-point extensions by projective modules and by simple modules, investigate the behavior of semibricks and epibricks, and conclude with an almost gentle example illustrating how our method can determine brick-finiteness beyond the scope of the available criteria for gentle and special biserial algebras.

\section{Preliminaries}\label{sec::preliminaries}
We work with a finite-dimensional basic $\Bbbk$-algebra $A$ over an algebraically closed field $\Bbbk$, and all modules are right modules unless otherwise stated. Let $\{e_i\mid 1\leq i\leq n\}$ be a complete set of primitive orthogonal idempotents of $A$, and set $P_i:=e_iA$. Thus, $P_i$ is the indecomposable projective right $A$-module associated with $e_i$. With this convention, we have a natural identification $\Hom_A(P_i,P_j)\cong e_jAe_i$. Consequently, an irreducible homomorphism $P_i\to P_j$ corresponds to an arrow $j\to i$ in the Gabriel quiver of $A$. More generally, homomorphisms in $\Hom_A(P_i,P_j)$ are represented by linear combinations of paths from $j$ to $i$, modulo the defining relations of $A$. If $f\colon P_i\to P_j$ and $g\colon P_j\to P_k$ correspond to paths from $j$ to $i$ and from $k$ to $j$, respectively, then $g\circ f\colon P_i\to P_k$ corresponds to the concatenated path from $k$ to $i$. On the other hand, when we view a right $A$-module $M$ as a representation of the quiver, an arrow $\alpha\colon i\to j$ acts in the usual direction by right multiplication $Me_i\to Me_j$. 

\subsection{Zariski topology}\label{subsec::Zariski-topology}
We refer to \cite[Chapter I]{Hartshorne-book-1977} for the standard background on affine algebraic sets and Zariski topology. A \emph{topology} on a set $\mathcal{X}$ may be defined by specifying a collection $\mathcal{C}$ of subsets of $\mathcal{X}$ satisfying the following conditions: 
\begin{enumerate}
\item $\varnothing$ and $\mathcal{X}$ belong to $\mathcal{C}$; 

\item if $\{\mathcal{X}_i\}_{i\in I}$ is any family of subsets in $\mathcal{C}$, then $\cap_{i\in I}\mathcal{X}_i\in\mathcal{C}$; 

\item if $\mathcal{X}_1,\mathcal{X}_2,\ldots,\mathcal{X}_s\in\mathcal{C}$, then $\mathcal{X}_1\cup \mathcal{X}_2\cup\cdots\cup \mathcal{X}_s\in\mathcal{C}$. 
\end{enumerate}
We call the pair $(\mathcal{X},\mathcal{C})$ a topological space. If the collection $\mathcal{C}$ is clear from the context, we simply refer to $\mathcal{X}$ as a topological space. Each subset of $\mathcal{X}$ belonging to $\mathcal{C}$ is called a \emph{closed set}, while a subset $\mathcal{U}\subseteq \mathcal{X}$ is called an \emph{open set} if its complement $\mathcal{X}\setminus \mathcal{U}$ is closed. A subset $\mathcal{Y}\subseteq \mathcal{X}$ is called \emph{locally closed} if
there exist an open subset $\mathcal{U}\subseteq \mathcal{X}$ and a closed subset $\mathcal{V}\subseteq \mathcal{X}$ such that $\mathcal{Y}=\mathcal{U}\cap \mathcal{V}$. One may equivalently define a topology by specifying its open subsets. In that case, the open subsets must contain $\varnothing$ and $\mathcal{X}$, must be closed under arbitrary unions, and must be closed under finite intersections. These two definitions are equivalent, because closed subsets are exactly complements of open subsets. 

A typical example of a topological space is $\mathbb{R}$ with its usual topology. In this case, a subset is closed if its complement is a union of open intervals. For instance, $(0,1)$ is open, while $[0,1]$ is closed since $\mathbb{R}\setminus[0,1]=(-\infty,0)\cup(1,\infty)$. The subset $(0,1]$ is neither open nor closed, while the subsets $\varnothing$ and $\mathbb{R}$ are both open and closed. 

A subset $\mathcal{U}$ of a topological space $\mathcal{X}$ is called \emph{dense} if every nonempty open subset of $\mathcal{X}$ has a nonempty intersection with $\mathcal{U}$. For example, $\mathbb{Q}$ is dense in $\mathbb{R}$ with respect to the usual topology. 
More generally, $\mathbb{Q}^m$ is dense in $\mathbb{R}^m$ for any positive integer $m$.

A topological space $\mathcal{X}$ is called \emph{compact} if every open cover of $\mathcal{X}$ has a finite subcover. In the usual topology on $\mathbb{R}^m$, the Heine--Borel Theorem (see \cite[Theorem 2.41]{Rudin-book-1976}) says that a subset of $\mathbb{R}^m$ is compact if and only if it is closed and bounded. For instance, a closed interval $[a,b]\subseteq\mathbb{R}$ is compact, while $(a,b)$ and $\mathbb{R}$ are not compact. We will use the Extreme Value Theorem: a continuous real-valued function on a compact set reaches its maximum and minimum, see \cite[Theorem~4.16]{Rudin-book-1976}. 

A topological space $\mathcal{X}$ is said to be \emph{irreducible} if $\mathcal{X}$ is nonempty and cannot be written as the union of two proper closed subsets. Equivalently, $\mathcal{X}$ is irreducible if any two nonempty open subsets of $\mathcal{X}$ have a nonempty intersection. It is clear that a nonempty open subset of an irreducible topological space is again irreducible. 

If $\mathcal{X}$ is a topological space, we define the \emph{dimension} of $\mathcal{X}$ to be the supremum of all integers $r\geq0$ for which there exists a strictly increasing chain of irreducible closed subsets $\mathcal{X}_0\subsetneq \mathcal{X}_1\subsetneq\cdots\subsetneq \mathcal{X}_r\subseteq \mathcal{X}$.

\begin{proposition}[{\cite{Hartshorne-book-1977}}]\label{prop::dimension-open-cover}
Let $\mathcal{X}$ be a Noetherian topological space, and suppose that $\mathcal{X}$ is covered by a family of open subsets $\{\mathcal{U}_i\}_{i\in I}$. Then, $\dim \mathcal{X}=\sup_{i\in I}\dim \mathcal{U}_i$, where each $\mathcal{U}_i$ is endowed with the induced topology from $\mathcal{X}$.
\end{proposition}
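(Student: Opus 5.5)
We have to prove the two inequalities $\dim\mathcal{X}\geq\sup_{i}\dim\mathcal{U}_i$ and $\dim\mathcal{X}\leq\sup_i\dim\mathcal{U}_i$ separately, and the tool in both directions is the correspondence between irreducible closed subsets of $\mathcal{X}$ meeting an open set $\mathcal{U}$ and irreducible closed subsets of $\mathcal{U}$.

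\textbf{The easy inequality.} Fix an index $i$ and a chain $\mathcal{Z}_0\subsetneq\cdots\subsetneq\mathcal{Z}_r$ of irreducible closed subsets of $\mathcal{U}_i$. Take closures $\overline{\mathcal{Z}_j}$ in $\mathcal{X}$.
\begin{enumerate}
\item The closure of an irreducible set is irreducible. If $\overline{\mathcal{Z}}=\mathcal{F}_1\cup\mathcal{F}_2$ with $\mathcal{F}_1,\mathcal{F}_2$ closed, then $\mathcal{Z}=(\mathcal{Z}\cap\mathcal{F}_1)\cup(\mathcal{Z}\cap\mathcal{F}_2)$. Irreducibility of $\mathcal{Z}$ gives, say, $\mathcal{Z}\subseteq\mathcal{F}_1$, hence $\overline{\mathcal{Z}}\subseteq\mathcal{F}_1$.
\item Since $\mathcal{Z}_j$ is closed in $\mathcal{U}_i$, we have $\overline{\mathcal{Z}_j}\cap\mathcal{U}_i=\mathcal{Z}_j$. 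So the closures still form a strictly increasing chain.
\end{enumerate}
Hence $\dim\mathcal{X}\geq\dim\mathcal{U}_i$ for every $i$.

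\textbf{The reverse inequality.} Take a chain $\mathcal{X}_0\subsetneq\cdots\subsetneq\mathcal{X}_r$ of irreducible closed subsets of $\mathcal{X}$.
\begin{enumerate}
\item Since $\mathcal{X}_0\neq\varnothing$ and the $\mathcal{U}_i$ cover $\mathcal{X}$, choose $i$ with $\mathcal{X}_0\cap\mathcal{U}_i\neq\varnothing$. Then every $\mathcal{X}_j$ meets $\mathcal{U}_i$.
\item Set $\mathcal{Y}_j:=\mathcal{X}_j\cap\mathcal{U}_i$. Each $\mathcal{Y}_j$ is closed in $\mathcal{U}_i$.
\item Each $\mathcal{Y}_j$ is irreducible, being a nonempty open subset of the irreducible space $\mathcal{X}_j$; this is the remark stated just before the definition of dimension.
\item Each $\mathcal{Y}_j$ is dense in $\mathcal{X}_j$, again because it is a nonempty open subset of an irreducible space. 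Hence $\overline{\mathcal{Y}_j}=\mathcal{X}_j$.
\item Strictness is preserved: if $\mathcal{Y}_j=\mathcal{Y}_{j+1}$, taking closures gives $\mathcal{X}_j=\mathcal{X}_{j+1}$, a contradiction.
\end{enumerate}
This yields a chain of length $r$ in $\mathcal{U}_i$, so $r\leq\sup_i\dim\mathcal{U}_i$, and taking the supremum over all chains gives $\dim\mathcal{X}\leq\sup_i\dim\mathcal{U}_i$.

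The main point needing care is the density step (4): a nonempty open subset of an irreducible space is dense. This holds because its complement together with its closure would otherwise be two proper closed sets covering the space. The argument does not actually use the Noetherian hypothesis; it applies verbatim to any topological space, allowing infinite values on both sides.
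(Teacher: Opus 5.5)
Your proof is correct and complete: the closure step for $\dim\mathcal{X}\geq\dim\mathcal{U}_i$ and the intersect-with-$\mathcal{U}_i$ step for the reverse inequality are exactly the standard argument. The paper gives no proof of its own and simply cites Hartshorne, where this is an exercise (Ch.~I, Ex.~1.10(b)) stated for arbitrary topological spaces, so you are also right that the Noetherian hypothesis is not needed.
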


We now recall the Zariski topology on affine space. Let 
\[
\mathbb{A}^n:=\Bbbk^n=\{(a_1,a_2,\ldots,a_n)\mid a_i\in\Bbbk\}
\]
be the affine $n$-space over $\Bbbk$. Given an ideal $\mathfrak{I}\subseteq \Bbbk[x_1,x_2,\ldots,x_n]$, let $f_1,f_2,\ldots,f_t$ be generators of $\mathfrak{I}$ (since $\Bbbk[x_1,x_2,\ldots,x_n]$ is a Noetherian ring, the ideal $\mathfrak{I}$ has a finite set of generators). We define
\[
\mathcal{Z}(\mathfrak{I}) := \left\{
(a_1,a_2,\ldots,a_n)\in\mathbb{A}^n
\ \middle|\
f_i(a_1,a_2,\ldots,a_n)=0,\ 1\leq i\leq t
\right\}.
\]
Equivalently,
\[
\mathcal{Z}(\mathfrak{I}) = \left\{
\mathbf{a}\in\mathbb{A}^n
\ \middle|\
f(\mathbf{a})=0 \text{ for all } f\in \mathfrak{I}
\right\}.
\]
Thus, $\mathcal{Z}(\mathfrak{I})$ is the set of common zeros of all polynomials in $\mathfrak{I}$. This set is called an \emph{affine algebraic subset} of $\mathbb{A}^n$. The \emph{Zariski topology} on $\mathbb{A}^n$ is defined as the topology whose closed sets are the affine algebraic subsets $\mathcal{Z}(\mathfrak{I})$, where $\mathfrak{I}$ runs through the ideals of $\Bbbk[x_1,x_2,\ldots,x_n]$. We give some examples. 

\begin{example}
(1) In $\mathbb{A}^1=\Bbbk$, we have $\mathcal{Z}(x)=\{0\}$. More generally, if $f(x)\in\Bbbk[x]$ is a nonzero polynomial, then $\mathcal{Z}(f(x))$ is the finite set of roots of $f(x)$. Hence, in the Zariski topology on $\mathbb{A}^1$, the proper closed subsets are finite sets. It follows that a nonempty open subset of $\mathbb{A}^1$ is obtained by removing finitely many points. 

(2) In $\mathbb{A}^2$, the closed subset $\mathcal{Z}(y) = \{(a,b)\in\mathbb{A}^2\mid b=0\}$ is the $x$-axis. Similarly, $\mathcal{Z}(x) = \{(a,b)\in\mathbb{A}^2\mid a=0\}$ is the $y$-axis. The closed subset $\mathcal{Z}(xy)$ is the union of the two coordinate axes, namely $\mathcal{Z}(xy)=\mathcal{Z}(x)\cup \mathcal{Z}(y)$. Hence, $\mathcal{Z}(xy)$ is reducible. 

(3) The subset $\mathcal{Z}(y-x^2)\subseteq \mathbb{A}^2$ is the affine parabola $\{(a,b)\in\mathbb{A}^2\mid b=a^2\}$. It is closed in the Zariski topology because it is defined by one polynomial equation.
\end{example}

Let $\mathcal{Z}$ be an affine algebraic subset of $\mathbb{A}^n$. We always endow $\mathcal{Z}$ with the Zariski topology induced from $\mathbb{A}^n$. It follows that a subset $\mathcal{U}\subseteq \mathcal{Z}$ is open in $\mathcal{Z}$ if and only if $\mathcal{U}= \mathcal{Z}\cap \mathcal{U}'$ for some Zariski-open subset $\mathcal{U}'\subseteq\mathbb{A}^n$. According to \cite[Chapter I, Proposition 1.5]{Hartshorne-book-1977}, $\mathcal{Z}$ is a finite union of irreducible closed subsets, and the maximal ones with respect to inclusion are called \emph{irreducible components} of $\mathcal{Z}$. An irreducible closed subset of $\mathbb{A}^n$, equipped with the induced Zariski topology, is called an \emph{affine algebraic variety}. 

Let $\mathcal{Z}$ be an affine algebraic subset with irreducible components $\mathcal{Z}_1,\mathcal{Z}_2,\ldots, \mathcal{Z}_s$. We have $\dim  \mathcal{Z}=\max\{\dim  \mathcal{Z}_i\mid 1\leq i\leq s\}$. In particular, $\dim\mathbb{A}^n=n$. 

If $f\in\Bbbk[x_1,x_2,\ldots,x_n]$, then the complement of $\mathcal{Z}(f)$ is denoted by
\[
\mathcal{D}(f):=\mathbb{A}^n\setminus \mathcal{Z}(f)
=
\{\mathbf{a}\in\mathbb{A}^n\mid f(\mathbf{a})\neq0\}.
\]
The subsets $\mathcal{D}(f)$ are called \emph{principal open subsets}. They are basic examples of Zariski-open subsets. We shall frequently use the following elementary principle: a condition given by the non-vanishing of a polynomial defines a Zariski-open subset. 

For later use, we record a standard example involving matrix ranks. 
\begin{example}\label{ex::open-set}
Let $\Mat_{m\times n}(\Bbbk)\cong \mathbb{A}^{mn}$ be the affine space of all $m\times n$ matrices over $\Bbbk$. For an integer $r\geq0$, the subset $\{M\in\Mat_{m\times n}(\Bbbk)\mid \operatorname{rank}M\leq r\}$ is Zariski-closed. Indeed, if $r<\min\{m,n\}$, the condition $\operatorname{rank}M\leq r$ is equivalent to saying that all $(r+1)\times(r+1)$ minors of $M$ are zero, and these minors are polynomial functions in the entries of $M$. If $r\geq\min\{m,n\}$, the subset is the whole space, and hence is also Zariski-closed. On the other hand, the subset $\{M\in\Mat_{m\times n}(\Bbbk)\mid \operatorname{rank}M\geq r\}$ is Zariski-open. Indeed, for $1\leq r\leq\min\{m,n\}$, this condition is equivalent to saying that at least one $r\times r$ minor of $M$ is nonzero. If $r=0$, the subset is the whole space, while if $r>\min\{m,n\}$, the subset is empty; in both cases, it is Zariski-open. 

We will also use the following equivalent form. Let $\mathcal{Z}$ be an affine algebraic subset, which we regard as a parameter space. For each point $\mathbf{a}\in  \mathcal{Z}$, suppose that we are given a linear map $\Phi_{\mathbf{a}}\colon \Bbbk^n\to\Bbbk^m$. Suppose that, after choosing bases, the entries of the matrix of $\Phi_{\mathbf{a}}$ are polynomial functions on $\mathcal{Z}$. Then, for every integer $d$, the subset
\[
 \mathcal{Z}_{\leq d} := \left\{
\mathbf{a}\in  \mathcal{Z}
\ \middle|\
\dim_{\Bbbk}\ker\Phi_{\mathbf{a}}\leq d
\right\}
\]
is Zariski-open in $\mathcal{Z}$, due to $\dim_{\Bbbk}\ker\Phi_{\mathbf{a}} = n-\operatorname{rank}\Phi_{\mathbf{a}}$. 
\end{example}

\subsection{Projective algebraic variety}
We briefly recall the projective setting. The \emph{projective $n$-space} over $\Bbbk$ is defined by
\[
\mathbb{P}^n:=(\mathbb{A}^{n+1}\setminus\{0\})/\Bbbk^{\times},
\]
where $\Bbbk^{\times}$ acts on $\mathbb{A}^{n+1}\setminus\{0\}$ by scalar multiplication. A point of $\mathbb{P}^n$ is written as $[a_0:a_1:\cdots:a_n]$ with $(a_0,a_1,\ldots,a_n)\neq0$, and $[a_0:a_1:\cdots:a_n] = [\lambda a_0:\lambda a_1:\cdots:\lambda a_n]$ 
for any $\lambda\in\Bbbk^{\times}$. A polynomial $f\in \Bbbk[x_0,x_1,\ldots,x_n]$ is called \emph{homogeneous} if all monomials appearing in $f$ have the same total degree. If $f$ is homogeneous, then the condition $f(a_0,a_1,\ldots,a_n)=0$ is independent of the chosen representative of the projective point $[a_0:a_1:\cdots:a_n]$.

Given a homogeneous ideal $\mathfrak{J}\subseteq \Bbbk[x_0,x_1,\ldots,x_n]$, we define
\[
\mathcal{Z}_{\mathbb{P}}(\mathfrak{J})
:=
\left\{
[a_0:a_1:\cdots:a_n]\in\mathbb{P}^n
\ \middle|\
f(a_0,a_1,\ldots,a_n)=0 \text{ for all homogeneous } f\in \mathfrak{J}
\right\},
\]
which is called a \emph{projective algebraic subset} of $\mathbb{P}^n$. The Zariski topology on $\mathbb{P}^n$ is defined by declaring $\mathcal{Z}_{\mathbb{P}}(\mathfrak{J})$ to be the closed subsets. An irreducible projective algebraic subset, endowed with the induced Zariski topology, is called a \emph{projective algebraic variety}.

If $f\in\Bbbk[x_0,x_1,\ldots,x_n]$ is homogeneous, then $\mathbb{P}^n\setminus \mathcal{Z}_{\mathbb{P}}(f)$ is Zariski-open in $\mathbb{P}^n$. It follows that a condition given by the non-vanishing of a homogeneous polynomial defines a Zariski-open subset in projective space.

We recall that $\mathbb{P}^n$ is covered by standard open subsets. For $0\leq i\leq n$, set
\[
\mathcal{U}_i:=\{[a_0:a_1:\cdots:a_n]\in\mathbb{P}^n\mid a_i\neq0\}.
\]
Then, $\mathcal{U}_i$ is Zariski-open in $\mathbb{P}^n$, and $\mathcal{U}_i\cong\mathbb{A}^n$. For example,
\[
\mathcal{U}_0\longrightarrow\mathbb{A}^n,\qquad
[a_0:a_1:\cdots:a_n]\longmapsto
\left(\frac{a_1}{a_0},\ldots,\frac{a_n}{a_0}\right)
\]
is an isomorphism, with inverse $\mathbb{A}^n\rightarrow \mathcal{U}_0$ given by $(a_1',\ldots,a_n')\mapsto [1:a_1':\cdots:a_n']$. Hence, $\mathbb{P}^n=\mathcal{U}_0\cup \mathcal{U}_1\cup\cdots\cup \mathcal{U}_n$. It follows from Proposition \ref{prop::dimension-open-cover} that $\dim \mathbb{P}^n=n$.

We end this subsection by recalling the following standard fact; for the affine case, see \cite[Chapter~I, Proposition~1.10]{Hartshorne-book-1977}. 

\begin{proposition}\label{prop:open-subset-dim}
Let $\mathcal{Z}$ be an irreducible affine or projective algebraic variety. Then, $\dim \mathcal{Z}=\dim \mathcal{U}$ for every nonempty open subset $\mathcal{U}$ of $\mathcal{Z}$.
\end{proposition}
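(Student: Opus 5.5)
The plan is to reduce to the affine case, where the statement is the cited result \cite[Chapter~I, Proposition~1.10]{Hartshorne-book-1977}, and then pass to the projective case through the standard affine cover. Throughout, ``irreducible'' is used in the sense of Subsection~\ref{subsec::Zariski-topology}.

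\textbf{Affine case.} Let $\mathcal{Z}\subseteq\mathbb{A}^n$ be irreducible and $\mathcal{U}\subseteq\mathcal{Z}$ nonempty open. First, $\dim\mathcal{U}\leq\dim\mathcal{Z}$. Indeed, a chain $\mathcal{X}_0\subsetneq\cdots\subsetneq\mathcal{X}_r$ of irreducible closed subsets of $\mathcal{U}$ yields, by taking closures $\overline{\mathcal{X}_i}$ in $\mathcal{Z}$, a chain of irreducible closed subsets of $\mathcal{Z}$. It stays strict because $\overline{\mathcal{X}_i}\cap\mathcal{U}=\mathcal{X}_i$.

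For the reverse inequality, use the coordinate ring $A(\mathcal{Z})$. Shrinking $\mathcal{U}$ to a principal open subset $\mathcal{D}(f)\cap\mathcal{Z}\neq\varnothing$ is harmless, since dimension only drops under passage to open subsets by the first step. This subset is affine with coordinate ring $A(\mathcal{Z})_f$. Both $A(\mathcal{Z})$ and $A(\mathcal{Z})_f$ are domains with the same fraction field. By Noether normalization, their Krull dimensions both equal the transcendence degree of that field, so $\dim(\mathcal{D}(f)\cap\mathcal{Z})=\dim\mathcal{Z}$. Combined with $\mathcal{D}(f)\cap\mathcal{Z}\subseteq\mathcal{U}\subseteq\mathcal{Z}$, this gives $\dim\mathcal{U}=\dim\mathcal{Z}$.

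\textbf{Projective case.} Let $\mathcal{Z}\subseteq\mathbb{P}^n$ be irreducible and cover it by the sets $\mathcal{Z}\cap\mathcal{U}_i$, where the $\mathcal{U}_i\cong\mathbb{A}^n$ are the standard open subsets. Each nonempty $\mathcal{Z}\cap\mathcal{U}_i$ is an irreducible closed subset of $\mathcal{U}_i\cong\mathbb{A}^n$, hence an affine variety. Any two of these pieces meet, because $\mathcal{Z}$ is irreducible. Proposition~\ref{prop::dimension-open-cover} gives $\dim\mathcal{Z}=\max_i\dim(\mathcal{Z}\cap\mathcal{U}_i)$.

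Any two nonempty pieces $\mathcal{Z}\cap\mathcal{U}_i$ and $\mathcal{Z}\cap\mathcal{U}_j$ share the nonempty open set $\mathcal{Z}\cap\mathcal{U}_i\cap\mathcal{U}_j$. By the affine case applied in each piece, all nonempty pieces therefore have the same dimension, namely $\dim\mathcal{Z}$. Now let $\mathcal{U}\subseteq\mathcal{Z}$ be nonempty open and pick $i$ with $\mathcal{U}\cap\mathcal{U}_i\neq\varnothing$. Applying the affine case to $\mathcal{U}\cap\mathcal{U}_i$ inside $\mathcal{Z}\cap\mathcal{U}_i$ gives $\dim(\mathcal{U}\cap\mathcal{U}_i)=\dim\mathcal{Z}$. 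Then $\dim\mathcal{U}\leq\dim\mathcal{Z}$ by the closure argument above, and $\dim\mathcal{U}\geq\dim(\mathcal{U}\cap\mathcal{U}_i)$, so equality holds.

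\textbf{Main obstacle.} The hard step is the affine inequality $\dim\mathcal{U}\geq\dim\mathcal{Z}$. It genuinely needs commutative algebra: Noether normalization, or the equality of Krull dimension and transcendence degree for affine domains. I would simply invoke \cite{Hartshorne-book-1977} for it rather than reprove it.
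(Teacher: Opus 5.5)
Your proof is correct. It also does more than the paper, which gives no proof: it only cites \cite[Chapter~I, Proposition~1.10]{Hartshorne-book-1977} for the affine case and leaves the projective case implicit.

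\textbf{Affine case.} Hartshorne's own argument differs from yours. He takes a chain of maximal length $\dim\mathcal{U}$ in $\mathcal{U}$. It must start at a point, so its closure is a maximal chain in $\mathcal{Z}$. This shows that the corresponding maximal ideal $\mathfrak{m}$ of $A(\mathcal{Z})$ has height $\dim\mathcal{U}$. He then concludes with the formula $\operatorname{ht}\mathfrak{m}+\dim A(\mathcal{Z})/\mathfrak{m}=\dim A(\mathcal{Z})$. Your route through a nonempty principal open subset needs only the weaker input that Krull dimension equals transcendence degree for affine domains. The price is one standard fact you use without stating: $\mathcal{D}(f)\cap\mathcal{Z}$, with the topology induced from $\mathbb{A}^n$, is homeomorphic to the closed set $\{(x,t)\mid x\in\mathcal{Z},\ tf(x)=1\}\subseteq\mathbb{A}^{n+1}$, whose coordinate ring is $A(\mathcal{Z})_f$. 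You should state this explicitly, because the dimension in the paper is the topological one, and you also need it to read off the Krull dimension.

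\textbf{Projective case.} The usual textbook route is Hartshorne's Exercises I.2.6--2.7. There one shows that all nonempty pieces $\mathcal{Z}\cap\mathcal{U}_i$ have the same dimension by identifying their coordinate rings with degree-zero parts of localizations of the homogeneous coordinate ring. That route also gives $\dim S(\mathcal{Z})=\dim\mathcal{Z}+1$. Your overlap argument reaches the same equality purely topologically, from the affine case and the irreducibility of $\mathcal{Z}$. This is shorter and is all the paper needs.

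Your closure step is valid for any subspace of any topological space. So reusing it in the projective case, and for $\mathcal{U}\cap\mathcal{U}_i\subseteq\mathcal{U}$, is legitimate.
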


\subsection{Grassmannian}
We recall some basic definitions and properties of Grassmannians from \cite[Chapter 5]{LB-Grassmannian-book}. Let $V$ be an $n$-dimensional $\Bbbk$-vector space with $n\geq 1$. For an integer $0\leq \ell\leq n$, the \emph{Grassmannian}, or Grassmann variety, is defined by
\[
\Gr_\ell(V) := \bigl\{ L\subseteq V \;\big|\; \dim_{\Bbbk} L = \ell \bigr\},
\]
namely, $\Gr_\ell(V)$ is the set of all $\ell$-dimensional linear subspaces of $V$.

\begin{example}
Let us recall some basic examples of Grassmannians. 
\begin{enumerate}
\item If $\ell=0$, then $\Gr_0(V)=\{0\}$ is a single point. 

\item If $\ell=1$, then $\Gr_1(V)=\mathbb{P}(V)$ is the projective space associated with $V$. In particular, $\Gr_1(\Bbbk^2)=\mathbb{P}^1$ is a projective line.

\item For any $\ell$, there is a canonical isomorphism $\Gr_\ell(V)\cong\Gr_{n-\ell}(V^*)$ given by sending a subspace $L\subseteq V$ to its annihilator $L^\perp = \{ f\in V^* \mid f(L)=0 \}$. 
\end{enumerate}
\end{example}

The Grassmannian $\Gr_\ell(V)$ has the structure of a projective algebraic variety via the \emph{Pl\"ucker embedding}
\[
\Gr_\ell(V) \hookrightarrow \mathbb{P}\Bigl(\bigwedge^{\ell} V\Bigr),\quad
L \longmapsto [v_1\wedge\cdots\wedge v_{\ell}],
\]
where $\{v_1,\dots,v_{\ell}\}$ is a basis of $L$. This map is well-defined, because replacing the basis of $L$ changes $v_1\wedge\cdots\wedge v_{\ell}$ only by a nonzero scalar. Its image is a closed subvariety of $\mathbb{P}(\wedge^{\ell}V)$ defined by the quadratic Pl\"ucker relations; see \cite[Theorem 5.2.3]{LB-Grassmannian-book}. We always endow $\Gr_\ell(V)$ with the Zariski topology induced by this embedding. The general linear group $\GL(V)$ acts transitively on $\Gr_\ell(V)$, since any two $\ell$-dimensional subspaces can be mapped to each other by an invertible linear transformation. Fix a subspace $L_0\in\Gr_\ell(V)$ and let
\[
\Stab_{\GL(V)}(L_0) := \{ g\in\GL(V) \mid gL_0 = L_0 \}
\]
be its stabilizer. Then, $\Stab_{\GL(V)}(L_0)$ is a parabolic subgroup of $\GL(V)$. The map
\[
\GL(V)/\Stab_{\GL(V)}(L_0) \longrightarrow \Gr_\ell(V),\qquad g\Stab_{\GL(V)}(L_0) \longmapsto gL_0
\]
is an isomorphism of algebraic varieties. It follows that $\Gr_\ell(V)$ is irreducible and homogeneous. Hence, $\Gr_\ell(V)$ is a projective algebraic variety. 

We recall the well-known dimension formula for Grassmannians; see, for instance, \cite[Corollary 5.3.8]{LB-Grassmannian-book}. For the sake of completeness, we include a proof. 

\begin{proposition}\label{prop::grassmannian-dimension}
Suppose that $n\geq 1$ and $0\leq \ell\leq n$. Then, $\dim \Gr_\ell(V)=\ell(n-\ell)$.
\end{proposition}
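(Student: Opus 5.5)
The plan is to cover $\Gr_\ell(V)$ by standard affine charts, each isomorphic to $\mathbb{A}^{\ell(n-\ell)}$, and then apply Proposition~\ref{prop::dimension-open-cover}. The cases $\ell=0$ and $\ell=n$ are trivial: the Grassmannian is a point, of dimension $0=\ell(n-\ell)$. So assume $1\leq \ell\leq n-1$.

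Fix a basis $e_1,\dots,e_n$ of $V$. For each subset $I\subseteq\{1,\dots,n\}$ with $|I|=\ell$, let $V_I$ be the span of $\{e_j\}_{j\notin I}$. Define
\[
\mathcal{U}_I:=\{L\in\Gr_\ell(V)\mid L\cap V_I=0\}.
\]
In Plücker coordinates, $\mathcal{U}_I$ is exactly the locus where the Plücker coordinate $p_I$ is nonzero. Indeed, write a basis of $L$ as the rows of an $\ell\times n$ matrix $A$; then $p_I$ is the $I$-minor of $A$, and $L\cap V_I=0$ holds precisely when that minor is invertible. Hence $\mathcal{U}_I$ is the intersection of $\Gr_\ell(V)$ with a standard open subset of $\mathbb{P}(\bigwedge^\ell V)$, so it is Zariski-open. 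Every $L$ has an $\ell\times n$ basis matrix of rank $\ell$, which has some nonzero maximal minor. Therefore the sets $\mathcal{U}_I$ cover $\Gr_\ell(V)$.

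Next, I identify each chart with affine space. For $L\in\mathcal{U}_I$, row reduction gives a unique basis matrix whose $I$-columns form the identity. The remaining $\ell(n-\ell)$ entries are then arbitrary, and this gives a bijection $\mathbb{A}^{\ell(n-\ell)}\to\mathcal{U}_I$. To check that this is an isomorphism of varieties, note two things. In one direction, the Plücker coordinates of the normalized matrix are minors, hence polynomial in its entries. In the other direction, each free entry equals a ratio $\pm p_{J}/p_I$, where $J$ is obtained from $I$ by swapping one index (Cramer's rule). This ratio is a regular function on $\mathcal{U}_I$. So $\mathcal{U}_I\cong\mathbb{A}^{\ell(n-\ell)}$, which has dimension $\ell(n-\ell)$.

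Finally, $\Gr_\ell(V)$ is a closed subset of a projective space, hence Noetherian. Proposition~\ref{prop::dimension-open-cover} then gives $\dim\Gr_\ell(V)=\sup_I\dim\mathcal{U}_I=\ell(n-\ell)$. (Alternatively, since $\Gr_\ell(V)$ is irreducible, Proposition~\ref{prop:open-subset-dim} applied to a single chart suffices.)

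The main obstacle is the middle step: verifying that the chart bijection is a genuine isomorphism of varieties for the topology induced by the Plücker embedding, not merely a bijection. Dimension is a topological invariant, so it would actually suffice to show the bijection is a homeomorphism. Continuity in both directions follows from the two polynomial and ratio-of-coordinates descriptions given above, so I would argue via those explicit formulas.
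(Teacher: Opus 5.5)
Your proposal is correct and follows essentially the same route as the paper. Both cover $\Gr_\ell(V)$ by the Pl\"ucker charts where $p_I\neq 0$, identify each chart with $\mathbb{A}^{\ell(n-\ell)}$, and conclude with Proposition~\ref{prop::dimension-open-cover}; your row-reduced normal form with the $I$-columns equal to the identity is exactly the paper's graph $\Gamma(a)$ of a linear map $a\colon V'\to V''$. Your Cramer's-rule description of the inverse via the ratios $\pm p_J/p_I$ makes explicit the step the paper only asserts, namely that the chart map and its inverse are regular.
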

\begin{proof}
If $\ell=0$ or $\ell=n$, then $\Gr_\ell(V)$ consists of a single point. In both cases, $\ell(n-\ell)=0$, so the formula holds. We treat the case $0<\ell<n$ in the following. Fix a decomposition $V=V'\oplus V''$ with $\dim_{\Bbbk}V' =\ell$ and $\dim_{\Bbbk} V''=n-\ell$. Consider the subset
\[
\mathcal{L} =\left\{
L\in \Gr_\ell(V)
\ \middle|\
\pi_{V'}|_L\colon L\to V'\text{ is an isomorphism}
\right\},
\]
where $\pi_{V'}\colon V\to V'$ is the projection onto $V'$. After choosing bases of $V'$ and $V''$, a subspace $L$ may be represented by an $n\times \ell$ matrix. The map $\pi_{V'}|_L$ is an isomorphism if and only if the $\ell\times\ell$ minor given by the $V'$-coordinate rows is nonzero. Hence, $\mathcal{L}$ is Zariski-open in $\Gr_\ell(V)$. 

We now identify $\mathcal{L}$ with an affine space. Let $L\in \mathcal{L}$. Since $\pi_{V'}|_L\colon L\to V'$ is an isomorphism, for each $u\in V'$ there exists a unique vector $x_u\in L$ such that $\pi_{V'}(x_u)=u$. Write $x_u=u+a(u)$ with $a(u)\in V''$. Then, the assignment $u\mapsto a(u)$ gives a $\Bbbk$-linear map $a\colon V' \to V''$. Moreover, $L=\{u+a(u)\mid u\in V'\}$. Conversely, every linear map $a\colon V' \to V''$ defines a subspace
\[
\Gamma(a)=\{u+a(u)\mid u\in V'\}\subseteq V.
\]
If $u+a(u)=0$, then $u=0$ by the direct sum decomposition $V=V'\oplus V''$. Hence, $\dim_{\Bbbk} \Gamma(a)=\dim_{\Bbbk} V'=\ell$. The restriction $\pi_{V'}|_{\Gamma(a)}\colon \Gamma(a)\to V'$ is an isomorphism, and therefore $\Gamma(a)\in \mathcal{L}$. Thus, we obtain a bijection $\Hom_{\Bbbk}(V',V'') \rightarrow \mathcal{L}$ given by $a\mapsto \Gamma(a)$. In coordinates, this map and its inverse are given by regular maps. Therefore, $\mathcal{L}\cong \Hom_{\Bbbk}(V',V'')$ as algebraic varieties. Since $\Hom_{\Bbbk}(V',V'')\cong \mathbb{A}^{\ell(n-\ell)}$, we have $\dim\mathcal{L}=\ell(n-\ell)$. 

It remains to explain why these open subsets determine the dimension of the whole Grassmannian. Choose a basis of $V$. For each $\ell$-element subset $I\subseteq \{1,\ldots,n\}$, let $\mathcal{L}_I\subseteq \Gr_\ell(V)$ be the set of $\ell$-dimensional subspaces whose Pl\"ucker coordinate corresponding to $I$ is nonzero. Equivalently, $\mathcal{L}_I$ is the set of subspaces whose projection onto the coordinate subspace spanned by the basis vectors indexed by $I$ is an isomorphism. Each $\mathcal{L}_I$ is a Zariski-open subset of $\Gr_\ell(V)$, and $\mathcal{L}_I\cong \mathbb{A}^{\ell(n-\ell)}$. We have
\[
\Gr_\ell(V)=\bigcup_{|I|=\ell}\mathcal{L}_I.
\]
Using Proposition \ref{prop::dimension-open-cover}, the dimension of $\Gr_\ell(V)$ is $\ell(n-\ell)$. 
\end{proof}

\subsection{One-point extension}
We recall the basic construction of one-point extension algebras; for more details, see \cite[Chapter~IV]{SS-Vol-3}. Let $\Lambda=B[E]$. Every right $\Lambda$-module can be described as a triple $(N,W,\eta)$, where $N\in\mod B$, $W$ is a finite-dimensional $\Bbbk$-vector space, and $\eta\colon W\to \Hom_B(E,N)$ is a $\Bbbk$-linear map. 
\begin{itemize}
\item[($\Rightarrow$)]
Let $M$ be a right $\Lambda$-module, and set $N:=Me_B$ and $W:=Me_0$. Then $N$ is naturally a right $B$-module, while $W$ is naturally a finite-dimensional $\Bbbk$-vector space. For $w\in W$ and $x\in E$, we rewrite $x$ as an element in $e_0\Lambda e_B$ and define
\[
wx := w
\begin{pmatrix}
0&0\\
x&0
\end{pmatrix}
\in (Me_0)(e_0\Lambda e_B)\subseteq Me_B =N.
\] 
This gives a $\Bbbk$-bilinear map $W\times E\to N$, $(w,x)\mapsto wx$. Equivalently, we obtain a right $B$-module homomorphism $\theta\colon W\otimes_{\Bbbk}E\to N$, given by $\theta(w\otimes x)=wx$. 

\item[($\Leftarrow$)] Let $N\in\mod B$, $W$ be a finite-dimensional $\Bbbk$-vector space, and $\theta\colon W\otimes_{\Bbbk}E\to N$ be a right $B$-module homomorphism. Then, $N\oplus W$ becomes a right $\Lambda$-module by the action
\[
(n,w)
\begin{pmatrix}
b&0\\
x&\lambda
\end{pmatrix}
=
\bigl(nb+\theta(w\otimes x),\,w\lambda\bigr),
\]
for all $n\in N$, $w\in W$, $b\in B$, $x\in E$, and $\lambda\in\Bbbk$. Here, the $\Bbbk$-linearity and right $B$-linearity of $\theta$ ensure that this action is associative.
\end{itemize}
By the tensor--Hom adjunction, we have a natural isomorphism
\[
\Hom_B(W\otimes_{\Bbbk}E,N)
\cong
\Hom_{\Bbbk}\bigl(W,\Hom_B(E,N)\bigr).
\] 
Hence, the map $\theta$ above is equivalently given by a $\Bbbk$-linear map $\eta\colon W\to \Hom_B(E,N)$. Explicitly, the map $\eta(w)\colon E\to N$ is given by $\eta(w)(x)=\theta(w\otimes x)$ for any $w\in W$, $x\in E$. Therefore, right $\Lambda$-modules are equivalently described by triples $(N,W,\eta)$ as above.

We then recall the following lemma from \cite[Chapter~IV, Lemma~1.8]{SS-Vol-3}. 
\begin{lemma}\label{lem::hom-non-zero}
Let $M=(N,W,\eta)$ be an indecomposable $\Lambda$-module with $\eta\neq 0$. Then, $\Hom_B(E,X)\neq 0$ for every nonzero indecomposable direct summand $X$ of $N$ in $\mod B$. 
\end{lemma}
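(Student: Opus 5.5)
The idea is to reduce to the case where $N$ has a single indecomposable summand that is invisible to $E$, and then build a nonzero non-isomorphism between $M$ and the $\Lambda$-module coming from that summand. We need the description of $\Lambda$-morphisms in the triple language. A morphism $(N,W,\eta)\to(N',W',\eta')$ is a pair $(f,g)$, where $f\colon N\to N'$ is $B$-linear and $g\colon W\to W'$ is $\Bbbk$-linear, satisfying $f_*\circ\eta=\eta'\circ g$. This compatibility is exactly what it means for the map $N\oplus W\to N'\oplus W'$ to commute with the action of $E$ given by $\theta$, through the adjunction recalled above.

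Suppose $N=X\oplus N'$, where $X$ is indecomposable and $\Hom_B(E,X)=0$; we will derive a contradiction.

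\begin{enumerate}
\item Since $\Hom_B(E,N)=\Hom_B(E,X)\oplus\Hom_B(E,N')=0\oplus\Hom_B(E,N')$, the image of $\eta$ lies in $\Hom_B(E,N')$. Write $\eta'\colon W\to\Hom_B(E,N')$ for the corestriction.

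\item The identification from step 1 gives a decomposition of $\Lambda$-modules
\[
M\cong (X,0,0)\oplus (N',W,\eta').
\]
To check this, take the projection $p\colon N\to X$ and the inclusion $i\colon X\to N$. The pairs $(p,0)\colon M\to(X,0,0)$ and $(i,0)\colon(X,0,0)\to M$ are morphisms. For $(p,0)$ the compatibility reads $p_*\eta=0$, which holds because $p_*\eta$ lands in $\Hom_B(E,X)=0$. For $(i,0)$ there is nothing to check, since $W=0$ on the source.

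\item Since $M$ is indecomposable and $X\neq0$, the second summand must vanish: $N'=0$ and $W=0$. Then $\eta=0$, contradicting the hypothesis $\eta\neq0$.
\end{enumerate}

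If one prefers to avoid the explicit decomposition in step 2, the same conclusion follows from endomorphisms. The composite $e=(ip,0)$ is an idempotent endomorphism of $M$, and it is neither $0$ nor $1$: it is nonzero because $X\neq0$, and it differs from $1$ because $W\neq0$, which follows from $\eta\neq0$. This contradicts indecomposability.

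The main care point is the morphism description in the triple language. Everything else is routine: Krull--Schmidt lets us split off the indecomposable summand $X$, and additivity of $\Hom_B(E,-)$ gives step 1.
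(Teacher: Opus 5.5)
Your proof is correct and complete. The paper gives no argument of its own here; it simply quotes the statement from \cite[Chapter~IV, Lemma~1.8]{SS-Vol-3}. Your splitting argument, or equivalently the nontrivial idempotent $(ip,0)\in\End_\Lambda(M)$, is the standard self-contained proof of that fact.

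In step 2 you only checked the maps for the summand $(X,0,0)$. Write $p'\colon N\to N'$ and $i'\colon N'\to N$ for the other projection and inclusion. To complete the decomposition you also need $(p',\operatorname{id}_W)$ and $(i',\operatorname{id}_W)$ to be morphisms. Their compatibility conditions are $p'_*\circ\eta=\eta'$ and $i'_*\circ\eta'=\eta$, and both follow at once from step 1. Your idempotent version avoids this check altogether.
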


A right $\Lambda$-module $M=(N,W,\eta)$ falls into one of the following two cases. If $\eta=0$, then $M\simeq (N,0,0)\oplus(0,W,0)$. In particular, if $M$ is indecomposable, then either $M\simeq (N,0,0)$ with $N$ indecomposable in $\mod B$, or $M\simeq (0,\Bbbk,0)$. If $\eta\neq0$, then necessarily $N\neq0$ and $W\neq0$. Moreover, if the indecomposable $B$-modules satisfying $\Hom_B(E,X)\neq0$ are precisely $X_1,\ldots,X_t$, and if $M$ is indecomposable, then Lemma~\ref{lem::hom-non-zero} implies that $N\in\add(X_1\oplus\cdots\oplus X_t)$. We call a module $M=(N,W,\eta)$ \emph{mixed} if $N\neq0$ and $W\neq0$. Thus, for an indecomposable module, being mixed is equivalent to $\eta\neq0$. In particular, a \emph{mixed brick} is a brick $M=(N,W,\eta)$ with $N\neq0$ and $W\neq0$, equivalently, a brick with $\eta\neq0$.

This motivates us to introduce the following definition.
\begin{definition}
Let $E\in\mod B$. We say that an indecomposable $B$-module $X$ is \emph{$E$-visible} if $\Hom_B(E,X)\neq0$, and that $B$ is \emph{$E$-visible finite} if there are only finitely many isomorphism classes of $E$-visible indecomposable $B$-modules. Otherwise, $B$ is said to be \emph{$E$-visible infinite}.
\end{definition}

It follows that $B$ is representation-finite if and only if $B$ is $S_i$-visible finite for all $1\leq i\leq m$, where $S_1,\ldots,S_m$ form a complete list of simple $B$-modules. In general, for a fixed choice of $E$, representation-finite always implies $E$-visible finite, but the converse does not hold. For example, let $B=\Bbbk(0\longrightarrow 1 \rightrightarrows 2)$ and let $E$ be the simple $B$-module corresponding to the vertex $0$. Then, the only indecomposable $B$-module visible from $E$ is $E$ itself, but $B$ is representation-infinite. The relation between $E$-visible finite and brick-finite is weaker: the above example also shows that $E$-visible finite does not imply brick-finite; every brick-finite but representation-infinite algebra gives, for a suitable simple module $E$, an example showing that brick-finite does not imply $E$-visible finite.

It is worth mentioning that $E$-visible finiteness of $B$, together with brick-finiteness of $B$, does not imply brick-finiteness of $\Lambda=B[E]$. For instance, if $B=\Bbbk$ and $E=\Bbbk^2$, then $B$ is $E$-visible finite and brick-finite, but $\Lambda$ is the Kronecker algebra, hence brick-infinite. 

Let $M=(N,W,\eta)$ and $M'=(N',W',\eta')$ be two right $\Lambda$-modules. A morphism $M\to M'$ is given by a pair $(f,g)$, where $f\colon N\to N'$ is a right $B$-module homomorphism and $g\colon W\to W'$ is a $\Bbbk$-linear map, satisfying $f_*\circ\eta=\eta'\circ g$. Here, $f_*\colon \Hom_B(E,N)\to\Hom_B(E,N')$ is the $\Bbbk$-linear map induced by post-composition, namely $f_*(\varphi)=f\circ\varphi$, for $\varphi\in \Hom_B(E,N)$. 

Suppose $h:=\dim_{\Bbbk} \Hom_B(E,N)$. Since invertibility is determined by the non-vanishing of the determinant, the automorphism group $\Aut_B(N)$ is a Zariski-open subset of $\End_B(N)$, and then $\Aut_B(N)$ is an algebraic group. We note that $\Aut_B(N)$ acts on $\Hom_B(E,N)$ by post-composition, and hence acts on the Grassmannian
\[
\Gr_{\ell}(\Hom_B(E,N)) =\bigl\{
L\subseteq \Hom_B(E,N) \mid \dim_{\Bbbk}L=\ell\bigr\},
\quad 1\leq \ell\leq h,
\]
by $f\cdot L:=f_*(L)$. We show how this action controls isomorphism classes of $\Lambda$-modules with a fixed $B$-module part.

\begin{proposition}\label{prop::iso-orbit}
Let $M=(N,W,\eta)$ and $M'=(N,W',\eta')$ be two $\Lambda$-modules such that $\eta$, $\eta'$ are injective and $\dim_{\Bbbk}W=\dim_{\Bbbk}W'=\ell$. Then, $M$ and $M'$ are isomorphic as $\Lambda$-modules if and only if $\Im\eta$ and $\Im\eta'$ lie in the same $\Aut_B(N)$-orbit in $\Gr_{\ell}(\Hom_B(E,N))$.
\end{proposition}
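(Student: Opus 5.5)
The plan is to prove both implications directly from the description of morphisms between triples recalled just before the statement. A morphism $M\to M'$ is a pair $(f,g)$ with $f\in\End_B(N)$, $g\colon W\to W'$ linear, and $f_*\circ\eta=\eta'\circ g$. Such a morphism is an isomorphism of $\Lambda$-modules exactly when both $f$ and $g$ are bijective. This holds because $M=N\oplus W$ as vector spaces and the underlying linear map of $(f,g)$ is $f\oplus g$. The inverse of an isomorphism $(f,g)$ is then $(f^{-1},g^{-1})$, and it satisfies the compatibility condition automatically.

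For ($\Rightarrow$), suppose $(f,g)\colon M\to M'$ is an isomorphism. Then $f\in\Aut_B(N)$ and $g$ is bijective. Applying the compatibility condition to all of $W$ gives
\[
f_*(\Im\eta)=f_*\eta(W)=\eta'(g(W))=\eta'(W')=\Im\eta'.
\]
Since $\eta$ and $\eta'$ are injective, both images are $\ell$-dimensional. Hence they are points of $\Gr_\ell(\Hom_B(E,N))$, and $f\cdot\Im\eta=\Im\eta'$, so they lie in the same orbit.

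For ($\Leftarrow$), suppose $f\in\Aut_B(N)$ satisfies $f_*(\Im\eta)=\Im\eta'$. Since $\eta'$ is injective, it gives a linear isomorphism $\eta'\colon W'\to\Im\eta'$; write $(\eta')^{-1}$ for its inverse on $\Im\eta'$. Define
\[
g:=(\eta')^{-1}\circ f_*\circ\eta\colon W\to W'.
\]
This is well defined because $f_*\eta(W)\subseteq\Im\eta'$. It is a composition of the three isomorphisms
\[
W\xrightarrow{\ \eta\ }\Im\eta\xrightarrow{\ f_*\ }\Im\eta'\xrightarrow{\ (\eta')^{-1}\ }W'.
\]
The first map is an isomorphism by injectivity of $\eta$. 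The second is an isomorphism because $f_*$ is invertible on $\Hom_B(E,N)$, with inverse $(f^{-1})_*$. By construction $\eta'\circ g=f_*\circ\eta$, so $(f,g)$ is a morphism, and it is an isomorphism by the first paragraph.

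There is no serious obstacle here. The one point needing care is the justification that a bijective $(f,g)$ is a $\Lambda$-isomorphism, which is exactly the remark about $f\oplus g$ and the compatibility of the inverse. The injectivity hypotheses are used in two places: to place $\Im\eta$ and $\Im\eta'$ in the same Grassmannian $\Gr_\ell$, and to invert $\eta'$ when defining $g$.
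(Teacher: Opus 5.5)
Your proof is correct and follows essentially the same argument as the paper. The forward direction reads off $f_*(\Im\eta)=\Im\eta'$ from the compatibility condition $f_*\circ\eta=\eta'\circ g$, and the converse defines $g=(\eta')^{-1}\circ f_*|_{\Im\eta}\circ\eta$ exactly as the paper does. Your added remark that $(f,g)$ with both components bijective is a $\Lambda$-isomorphism (because its underlying linear map is $f\oplus g$) only makes explicit a step the paper leaves implicit.
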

\begin{proof}
Suppose that $(f,g)\colon M\to M'$ is an isomorphism. Then, $f\in\Aut_B(N)$ and $g\colon W\to W'$ is a $\Bbbk$-linear isomorphism.
The compatibility condition $f_*\circ\eta=\eta'\circ g$ implies $f_*(\Im\eta)=\Im\eta'$. Thus, $\Im\eta$ and $\Im\eta'$ lie in the same $\Aut_B(N)$-orbit.

Conversely, suppose that $f\in\Aut_B(N)$ satisfies $f_*(\Im\eta)=\Im\eta'$. Since $\eta$ and $\eta'$ are injective, the map
\[
g:=(\eta')^{-1}\circ f_*|_{\Im\eta}\circ\eta
\colon W\to W'
\]
is a well-defined $\Bbbk$-linear isomorphism. Moreover, it satisfies $f_*\circ\eta=\eta'\circ g$. Hence $(f,g)$ is an isomorphism $M\to M'$.
\end{proof}

\section{Mixed bricks}\label{Section-3}
We start with the following easy observation.
\begin{lemma}\label{lem:eta-injective}
If $M=(N,W,\eta)$ is a mixed brick in $\mod \Lambda$, then the map $\eta\colon W\to \Hom_B(E,N)$ is injective. 
\end{lemma}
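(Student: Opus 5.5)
The plan is to argue by contradiction: assuming $\ker\eta\neq0$, I will build an endomorphism of $M$ that is neither zero nor invertible. Such a map cannot exist in a brick, since its endomorphism ring is $\Bbbk$ and every nonzero element is invertible.

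Suppose $K:=\ker\eta\neq0$. Pick a nonzero vector $w_0\in K$ and a linear functional $\phi\colon W\to\Bbbk$ with $\phi(w_0)=1$. Let $p\colon W\to W$ be the rank-one map $w\mapsto\phi(w)w_0$. I would consider the pair $(0,p)\colon M\to M$, where the $B$-component is the zero map $N\to N$. The compatibility condition required of morphisms (recalled just before Proposition~\ref{prop::iso-orbit}) is $0_*\circ\eta=\eta\circ p$. The left side is zero. The right side sends $w$ to $\phi(w)\eta(w_0)=0$, because $w_0\in\ker\eta$. Hence $(0,p)$ is a $\Lambda$-endomorphism of $M$.

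This endomorphism is nonzero, since $p(w_0)=w_0\neq0$. It is not invertible, since its $N$-component is zero and $N\neq0$, which holds because $M$ is mixed. This contradicts $\End_\Lambda(M)=\Bbbk$, so $\eta$ is injective.

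An equivalent way to see the same thing: the endomorphism $(0,p)$ is not of the form $\lambda\operatorname{id}_M=(\lambda\operatorname{id}_N,\lambda\operatorname{id}_W)$. Matching $N$-components forces $\lambda=0$, while $p\neq0$.

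There is no real obstacle here. The only point that needs care is checking that a pair with zero $N$-component satisfies the morphism compatibility condition, and that check is exactly where $w_0\in\ker\eta$ is used.
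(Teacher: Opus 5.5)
Your proposal is correct and is essentially the paper's proof: both use the rank-one map $w\mapsto\phi(w)w_0$ with $w_0\in\ker\eta$, paired with the zero map on $N$. This gives a nonzero, non-invertible endomorphism of $M$. Your normalization $\phi(w_0)=1$ is a harmless refinement that makes nonvanishing immediate; the paper only takes $\phi\neq0$.
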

\begin{proof}
Suppose, for a contradiction, that $\ker\eta\neq 0$. Choose a nonzero element $w_0\in\ker\eta$ and a nonzero $\Bbbk$-linear functional $\lambda\colon W\to\Bbbk$. We define a $\Bbbk$-linear map $g\colon W\to W$ by $g(w)=\lambda(w)w_0$ for any $w\in W$. Since $\lambda\neq 0$ and $w_0\neq 0$, the map $g$ is nonzero. Moreover, $\Im g\subseteq \Bbbk w_0\subseteq\ker\eta$, and hence $\eta\circ g=0$.
 
Let $f=0\colon N\to N$ be the zero $B$-module homomorphism. Then, $f_*\circ\eta=0=\eta\circ g$, namely, $(f,g)$ defines an endomorphism of $M$. This endomorphism is nonzero because $g\neq 0$. On the other hand, it cannot be an automorphism, since its $N$-component is the zero map and $N\neq 0$. This contradicts the assumption that $M$ is a brick.
\end{proof}

We obtain a useful restriction on the $B$-module part of a mixed brick. 
\begin{corollary}\label{cor::mixed-brick-summand}
Let $M=(N,W,\eta)\in\mod \Lambda$ be a mixed brick. Then, every nonzero indecomposable direct summand $X$ of $N$ satisfies $\Hom_B(E,X)\neq 0$. 
\end{corollary}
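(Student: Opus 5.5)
The plan is to reduce directly to Lemma~\ref{lem::hom-non-zero}, which already gives the conclusion for indecomposable $\Lambda$-modules with $\eta\neq 0$. So it suffices to check that a mixed brick $M=(N,W,\eta)$ is indecomposable and satisfies $\eta\neq 0$.

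For indecomposability, I would use the standard argument that a brick has local endomorphism ring. Since $\End_\Lambda(M)=\Bbbk$ and $M\neq 0$, there are no nontrivial idempotents in $\End_\Lambda(M)$. Hence $M$ admits no nontrivial direct sum decomposition.

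For $\eta\neq 0$, I would invoke Lemma~\ref{lem:eta-injective}: because $M$ is a mixed brick, $\eta\colon W\to\Hom_B(E,N)$ is injective. Since $W\neq 0$ by the definition of a mixed module, it follows that $\eta\neq 0$. (Alternatively, one could note that $\eta=0$ gives the splitting $M\simeq (N,0,0)\oplus(0,W,0)$ with both summands nonzero, contradicting indecomposability.)

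With both hypotheses verified, Lemma~\ref{lem::hom-non-zero} applies and yields $\Hom_B(E,X)\neq 0$ for every nonzero indecomposable direct summand $X$ of $N$. I expect no genuine obstacle here. The only point needing care is that the hypotheses of Lemma~\ref{lem::hom-non-zero} (indecomposability and $\eta\neq0$) are exactly what the brick and mixed conditions supply.
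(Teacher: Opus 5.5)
Your proof is correct and follows the same approach as the paper: a brick is indecomposable, Lemma~\ref{lem:eta-injective} together with $W\neq 0$ gives $\eta\neq 0$, and then Lemma~\ref{lem::hom-non-zero} applies directly.
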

\begin{proof}
Since $M$ is a mixed brick, it is indecomposable. Using Lemma \ref{lem:eta-injective}, $\eta$ is injective and hence nonzero. Then, the statement follows immediately from Lemma \ref{lem::hom-non-zero}.
\end{proof}

Using Lemma~\ref{lem:eta-injective}, we obtain the following criterion. 
\begin{proposition}\label{prop::mixed-brick-criterion}
Let $M=(N,W,\eta)\in\mod \Lambda$ with $N\neq 0$ and $W\neq 0$. Then, $M$ is a mixed brick if and only if $\eta$ is injective and
\begin{equation}\label{equ::mixed-brick-criterion}
\Stab_{\End_B(N)}(\Im\eta):=\left\{
f\in\End_B(N)\ \middle|\ f_*(\Im\eta)\subseteq \Im\eta
\right\}
=
\Bbbk\operatorname{id}_{N} .
\end{equation}
\end{proposition}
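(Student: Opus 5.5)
The proof will be a direct comparison of $\End_\Lambda(M)$ with the stabilizer. The key observation is that, once $\eta$ is injective, an endomorphism of $M$ is completely determined by its $N$-component. Recall that an endomorphism of $M$ is a pair $(f,g)$ with $f\in\End_B(N)$, $g\in\End_\Bbbk(W)$ and $f_*\circ\eta=\eta\circ g$. Consider the projection
\[
\pi\colon \End_\Lambda(M)\to\End_B(N),\qquad (f,g)\mapsto f .
\]

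\emph{Step 1: when $\eta$ is injective, $\pi$ is injective with image $\Stab_{\End_B(N)}(\Im\eta)$.}

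\begin{itemize}
\item \emph{The image lies in the stabilizer.} If $(f,g)$ is an endomorphism, then $f_*(\Im\eta)=\eta(g(W))\subseteq\Im\eta$.
\item \emph{$\pi$ is injective.} If $f=0$, then $\eta\circ g=0$, and injectivity of $\eta$ forces $g=0$.
\item \emph{Every stabilizing $f$ lifts.} Given $f$ with $f_*(\Im\eta)\subseteq\Im\eta$, set $g:=\eta^{-1}\circ f_*\circ\eta$. This is well defined because $\eta\colon W\to\Im\eta$ is an isomorphism, in the same spirit as the proof of Proposition~\ref{prop::iso-orbit}. It satisfies $f_*\eta=\eta g$, so $(f,g)$ is an endomorphism.
\end{itemize}

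Hence $\End_\Lambda(M)\cong\Stab_{\End_B(N)}(\Im\eta)$ as $\Bbbk$-algebras, and the identity corresponds to $\operatorname{id}_N$.

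\emph{Step 2: the forward direction.} Suppose $M$ is a mixed brick. Lemma~\ref{lem:eta-injective} gives that $\eta$ is injective. Then Step 1 identifies $\Bbbk\cong\End_\Lambda(M)$ with the stabilizer, and since $\operatorname{id}_N$ lies in the stabilizer, the stabilizer is $\Bbbk\operatorname{id}_N$.

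\emph{Step 3: the converse.} Suppose $\eta$ is injective and the stabilizer equals $\Bbbk\operatorname{id}_N$. By Step 1, $\End_\Lambda(M)$ is one-dimensional, spanned by $(\operatorname{id}_N,\operatorname{id}_W)$. So $M$ is a brick, and it is mixed because $N\neq 0$ and $W\neq 0$.

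I do not expect a real obstacle. The only point needing care is that, in Step 1, the lift $g$ is uniquely determined by $f$, which is exactly where injectivity of $\eta$ enters. It also matters that the bijection $\pi$ respects composition, which is immediate since composition in $\End_\Lambda(M)$ is componentwise.
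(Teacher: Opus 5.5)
Your proposal is correct and follows essentially the same route as the paper. Both arguments identify $\End_\Lambda(M)$ with $\Stab_{\End_B(N)}(\Im\eta)$ as algebras once $\eta$ is injective: the paper first replaces $W$ by $\Im\eta$, while you lift directly via $g=\eta^{-1}\circ f_*\circ\eta$. Both then invoke Lemma~\ref{lem:eta-injective} for the forward direction.
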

\begin{proof}
Suppose that $\eta$ is injective. Via the isomorphism $\eta\colon W\xrightarrow{\sim}\Im\eta\subseteq \Hom_B(E,N)$, the module $M$ is isomorphic to  $(N,\Im\eta)$ with the natural inclusion $\Im\eta\hookrightarrow \Hom_B(E,N)$. Equivalently, $(N,\Im\eta)$ has the following right $\Lambda$-module structure:
\[
(n,\varphi)
\begin{pmatrix}
b&0\\
x&\lambda
\end{pmatrix}
=
\bigl(nb+\varphi(x),\,\varphi \lambda\bigr),
\qquad
n\in N,\ \varphi\in \Im\eta,\ b\in B,\ x\in E,\ \lambda\in\Bbbk .
\]
Let $(f,g)$ be an endomorphism of $(N,\Im\eta)$, where $f\colon N\to N$ and $g\colon \Im\eta\to \Im\eta$ are $\Bbbk$-linear maps. By the commutativity with the $\Lambda$-action, we obtain $f(nb)=f(n)b$, and hence $f\in\End_B(N)$. Moreover, we obtain $f(\varphi(x))=g(\varphi)(x)$, equivalently, $f\circ \varphi=g(\varphi)$. It follows that every endomorphism of $M$ is uniquely determined by an element $f\in\End_B(N)$ satisfying $f_*(\Im\eta)\subseteq\Im\eta$, where $f_*$ is defined on $\Hom_B(E,N)$ and $g=f_*|_{\Im\eta}$. Conversely, every $f\in\End_B(N)$ satisfying $f_*(\Im\eta)\subseteq\Im\eta$ determines an endomorphism of $(N,\Im\eta)$ by taking $g=f_*|_{\Im\eta}$. Hence, there is an algebra isomorphism
\[
\End_{\Lambda}(M)\cong \End_{\Lambda}(N,\Im\eta)
\cong
\left\{
f\in\End_B(N)\ \middle|\ f_*(\Im\eta)\subseteq \Im\eta
\right\}.
\] 

If $M$ is a mixed brick, the map $\eta$ is injective by Lemma~\ref{lem:eta-injective}. Using the algebra isomorphism above, the condition that $M$ is a brick is therefore equivalent to \eqref{equ::mixed-brick-criterion}. Conversely, if $\eta$ is injective and \eqref{equ::mixed-brick-criterion} holds, then the algebra isomorphism above gives $\End_{\Lambda}(M)=\Bbbk\operatorname{id}_M$, and hence $M$ is a mixed brick. This proves the assertion.
\end{proof}

Up to replacing $W$ by $\Im\eta$, mixed bricks in $\mod \Lambda$ are controlled by pairs $(N,L)$ with $N\in\mod B$ and $0\neq L\subseteq\Hom_B(E,N)$ such that $\Stab_{\End_B(N)}(L) = \Bbbk\operatorname{id}_{N}$. Moreover, by Proposition~\ref{prop::iso-orbit}, we have $(N,L) \cong (N',L')$ if and only if there exists $f\colon N \xrightarrow{\sim} N'$ satisfying $f_*(L) = L'$.

\subsection{Necessary conditions for mixed bricks}
Recall that $\Lambda=B[E]$. In this subsection, we derive some necessary conditions for mixed bricks in $\mod \Lambda$.

\begin{lemma}\label{lem:zero-action}
Suppose that $\Hom_B(E,N)\neq0$. If there exists a nonzero endomorphism $f\in\End_B(N)$ which acts as zero on $\Hom_B(E,N)$ by post-composition, then no mixed brick in $\mod \Lambda$ has $B$-module part isomorphic to $N$.
\end{lemma}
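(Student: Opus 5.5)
The plan is to argue by contradiction using Proposition~\ref{prop::mixed-brick-criterion}, after reducing to the case of equality $N'=N$ via transport of structure. Suppose $M'=(N',W,\eta)$ is a mixed brick with an isomorphism $u\colon N\xrightarrow{\sim}N'$.

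First, I replace $M'$ by a module whose $B$-part is literally $N$. Set $\eta_1:=(u^{-1})_*\circ\eta\colon W\to\Hom_B(E,N)$. Then $(u^{-1},\operatorname{id}_W)$ is an isomorphism $(N',W,\eta)\to(N,W,\eta_1)$, since the compatibility $(u^{-1})_*\circ\eta=\eta_1\circ\operatorname{id}_W$ holds by construction. So $M:=(N,W,\eta_1)$ is again a mixed brick, and it has $N\neq0$ and $W\neq0$.

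Next, I apply Proposition~\ref{prop::mixed-brick-criterion} to $M$. It tells us that $\eta_1$ is injective and that $\Stab_{\End_B(N)}(\Im\eta_1)=\Bbbk\operatorname{id}_N$. Now take the nonzero $f\in\End_B(N)$ with $f_*=0$ on $\Hom_B(E,N)$. Then $f_*(\Im\eta_1)=0\subseteq\Im\eta_1$, so $f$ lies in the stabilizer. Hence $f=\lambda\operatorname{id}_N$ for some $\lambda\in\Bbbk$, and $\lambda\neq0$ because $f\neq0$.

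Finally, I derive the contradiction. Since $W\neq0$ and $\eta_1$ is injective, $\Im\eta_1\neq0$, so there is a nonzero $\varphi\in\Hom_B(E,N)$. The hypothesis $\Hom_B(E,N)\neq0$ also guarantees such a $\varphi$ directly. Then $f_*(\varphi)=\lambda\varphi\neq0$, which contradicts $f_*=0$. Therefore no mixed brick has $B$-module part isomorphic to $N$.

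The only step needing any care is the transport along $u$, which is the routine check of the compatibility condition for morphisms of triples done above. Alternatively, one can bypass the transport: the endomorphism $(f,0)$ of $M'$ is nonzero and nilpotent-free issues aside, it is not a scalar multiple of the identity because $f_*\circ\eta=0$ forces the $W$-component of any scalar to vanish. This also works, but the Proposition~\ref{prop::mixed-brick-criterion} route is cleaner.
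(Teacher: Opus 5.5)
Your main argument is correct and is essentially the paper's proof: you put $f$ into $\Stab_{\End_B(N)}(\Im\eta)$ via Proposition~\ref{prop::mixed-brick-criterion}, then rule out a nonzero scalar because it would act nontrivially on the nonzero space $\Hom_B(E,N)$. Your transport along $u$ only makes explicit the identification $N'=N$ that the paper takes for granted, and your alternative via $(f,0)$ also works provided you use $(ufu^{-1},0)$ as the endomorphism of $M'$ (or $(f,0)$ on the transported module $M$).
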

\begin{proof}
Suppose, for a contradiction, that $M=(N,W,\eta)$ is a mixed brick. By Proposition~\ref{prop::mixed-brick-criterion}, the map $\eta$ is injective and $\Stab_{\End_B(N)}(\Im\eta)=\Bbbk\operatorname{id}_{N}$. Since $f$ acts as zero on $\Hom_B(E,N)$, we have $f_*(\Im\eta)=0\subseteq \Im\eta$. 
Hence, $f\in\Stab_{\End_B(N)}(\Im\eta)$. 
Moreover, $f$ cannot be a nonzero scalar multiple of $\operatorname{id}_{N}$, because $\Hom_B(E,N)\neq0$ and every nonzero scalar acts nontrivially on $\Hom_B(E,N)$. 
Thus, $f$ is a nonzero non-scalar element of $\Stab_{\End_B(N)}(\Im\eta)$, a contradiction. 
\end{proof}

It is then reasonable to introduce the following definition.
\begin{definition}\label{def::E-faithful-module}
Let $N\in \mod B$. We say that $N$ is \emph{$E$-faithful} if the natural representation
\[
\begin{aligned}
\rho_N\colon \End_B(N)&\longrightarrow \End_{\Bbbk}\Hom_B(E,N),\\
f&\longmapsto f_*,
\qquad
f_*(\varphi):=f\circ\varphi,\; \forall \; \varphi\in\Hom_B(E,N)
\end{aligned}
\]
is injective.
\end{definition}

\begin{proposition}
Let $M=(N,W,\eta)$ be a mixed brick in $\mod \Lambda$. Then, $N$ is $E$-faithful. 
\end{proposition}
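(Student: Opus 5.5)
The plan is to argue by contradiction, reducing directly to Lemma~\ref{lem:zero-action}. Let $M=(N,W,\eta)$ be a mixed brick. We will show that the representation $\rho_N$ of Definition~\ref{def::E-faithful-module} is injective, that is, that $\ker\rho_N=0$.

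First, we record that $\Hom_B(E,N)\neq0$. Since $M$ is mixed, $W\neq0$, and Lemma~\ref{lem:eta-injective} shows that $\eta\colon W\to\Hom_B(E,N)$ is injective. Hence $\Hom_B(E,N)\supseteq\Im\eta\neq0$. This is exactly the standing hypothesis of Lemma~\ref{lem:zero-action}.

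Next, suppose $\rho_N$ is not injective, and pick a nonzero $f\in\ker\rho_N$. Then $f\in\End_B(N)$ is nonzero and $f_*=0$ on $\Hom_B(E,N)$, so $f$ acts as zero by post-composition. Lemma~\ref{lem:zero-action} now says that no mixed brick has $B$-module part isomorphic to $N$. This contradicts the fact that $M$ itself is such a brick. Therefore $\ker\rho_N=0$, and $N$ is $E$-faithful.

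We could also argue directly via Proposition~\ref{prop::mixed-brick-criterion}. Such an $f$ satisfies $f_*(\Im\eta)=0\subseteq\Im\eta$, so $f$ lies in the stabilizer, which equals $\Bbbk\operatorname{id}_N$. A nonzero scalar multiple of $\operatorname{id}_N$ acts nontrivially on the nonzero space $\Hom_B(E,N)$, which is impossible for $f$. There is no genuine obstacle here; the only point needing care is to confirm $\Hom_B(E,N)\neq0$ before invoking the lemma, and injectivity of $\eta$ supplies this.
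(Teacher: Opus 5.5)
Your proposal is correct and takes essentially the same approach as the paper. The paper inlines exactly your alternative route: injectivity of $\eta$ from Lemma~\ref{lem:eta-injective}, then any $f\in\ker\rho_N$ stabilizes $\Im\eta$, so $f=\lambda\operatorname{id}_N$ by Proposition~\ref{prop::mixed-brick-criterion}, and $\lambda=0$ because $f_*=0$ on the nonzero space $\Hom_B(E,N)$. Your main route via Lemma~\ref{lem:zero-action} packages the same computation, and you correctly verify its hypothesis $\Hom_B(E,N)\neq0$.
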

\begin{proof}
By Lemma~\ref{lem:eta-injective}, the map $\eta$ is injective, and then $0\neq \Im\eta\subseteq \Hom_B(E,N)$. By Proposition~\ref{prop::mixed-brick-criterion}, we have $\Stab_{\End_B(N)}(\Im\eta)=\Bbbk\operatorname{id}_{N}$. Let $f\in\ker\rho_N$. Then, $f_*$ acts as zero on $\Hom_B(E,N)$, and hence $f_*(\Im\eta)=0\subseteq \Im\eta$. It follows that $f\in\Stab_{\End_B(N)}(\Im\eta)$, i.e., $f=\lambda\operatorname{id}_{N}$ for some $\lambda\in\Bbbk$. Since $f_*$ acts as zero on the nonzero vector space $\Hom_B(E,N)$, we must have $\lambda=0$. Hence, $\ker\rho_N=0$, and $N$ is $E$-faithful.
\end{proof}

In the following, we assume that $B$ is $E$-visible finite and derive some necessary dimension conditions for mixed bricks in $\mod \Lambda$. Let $X_1,X_2,\ldots,X_t$ be a complete list of indecomposable $E$-visible $B$-modules. If no such indecomposable modules exist, then Corollary~\ref{cor::mixed-brick-summand} implies that there are no mixed bricks in $\mod \Lambda$; hence, in the following, we assume that $t\geq1$. By the Krull--Schmidt theorem and Corollary~\ref{cor::mixed-brick-summand}, if $(N,W,\eta)\in \mod \Lambda$ is a mixed brick, then
\[
N\cong N_{\mathbf{m}}:= \bigoplus_{i=1}^t X_i^{m_i}
\]
for some nonzero multiplicity vector $\mathbf{m}:=(m_1,\ldots,m_t)\in\mathbb{Z}_{\geq 0}^t$. 

We set $d_i:=\dim_{\Bbbk}\Hom_B(E,X_i)$ for each $1\leq i\leq t$. Since $\Hom_B(E,N_{\mathbf{m}}) \cong \bigoplus_{i=1}^t \Hom_B(E,X_i)^{m_i}$, we define
\[
h(\mathbf{m})
:=
\dim_{\Bbbk}\Hom_B(E,N_{\mathbf{m}})
=
\sum_{i=1}^t m_i d_i .
\] 

We define the Hom matrix $\mathsf{C}=(c_{ij})_{1\leq i,j\leq t}$ by $c_{ij}:=\dim_{\Bbbk}\Hom_B(X_i,X_j)$. As a $\Bbbk$-vector space, we have 
\[
\End_B(N_{\mathbf{m}})
\cong
\bigoplus_{1\leq i,j\leq t}
\Hom_B(X_i^{m_i},X_j^{m_j})\cong
\bigoplus_{1\leq i,j\leq t}
\Hom_B(X_i,X_j)^{m_im_j},  
\]
and then set
\[
r(\mathbf{m}) := \dim_{\Bbbk}\End_B(N_{\mathbf{m}})
= \sum_{1\leq i,j\leq t} m_i m_j c_{ij}
= \mathbf{m}^{\mathsf{T}}\mathsf{C}\mathbf{m}.
\]

\begin{theorem}\label{thm:dim-conditions}
Let $M=(N,W,\eta)$ be a mixed brick in $\mod \Lambda$ with $N\cong N_{\mathbf{m}}$ and 
$\ell=\dim_{\Bbbk}W$. Then, 
\[
r(\mathbf{m})-1 \leq \ell\bigl(h(\mathbf{m})-\ell\bigr).
\]
In particular,
\[
r(\mathbf{m})-1
\leq
\left\lfloor \frac{h(\mathbf{m})^2}{4}\right\rfloor.
\]
\end{theorem}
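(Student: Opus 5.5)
We use an orbit–stabilizer dimension count. Replace $N$ by $N_{\mathbf{m}}$ (isomorphic modules give isomorphic triples) and set $V:=\Hom_B(E,N_{\mathbf{m}})$, so $\dim_{\Bbbk}V=h(\mathbf{m})$. By Lemma~\ref{lem:eta-injective}, $L:=\Im\eta$ is a point of $\Gr_\ell(V)$. By Proposition~\ref{prop::mixed-brick-criterion}, $\Stab_{\End_B(N_{\mathbf{m}})}(L)=\Bbbk\operatorname{id}$.

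The plan is to show that the orbit $\Aut_B(N_{\mathbf{m}})\cdot L$ has dimension $r(\mathbf{m})-1$. Since this orbit lies in $\Gr_\ell(V)$, Proposition~\ref{prop::grassmannian-dimension} then gives the bound $r(\mathbf{m})-1\le \ell(h(\mathbf{m})-\ell)$.

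\textbf{Key steps.}
\begin{enumerate}
\item The stabilizer of $L$ in the group $G:=\Aut_B(N_{\mathbf{m}})$ is the set of invertible elements of $\Stab_{\End_B(N_{\mathbf{m}})}(L)=\Bbbk\operatorname{id}$, namely $\Bbbk^\times\operatorname{id}$. It is one-dimensional.
\item $G$ is a nonempty Zariski-open subset of the affine space $\End_B(N_{\mathbf{m}})\cong\mathbb{A}^{r(\mathbf{m})}$. It is therefore irreducible of dimension $r(\mathbf{m})$ by Proposition~\ref{prop:open-subset-dim}.
\item The orbit map $G\to\Gr_\ell(V)$, $f\mapsto f_*(L)$, is a morphism. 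In Plücker coordinates it is $f\mapsto[f_*v_1\wedge\cdots\wedge f_*v_\ell]$, which is polynomial in $f$. Its fibres are cosets $f\cdot\Bbbk^\times\operatorname{id}$, each of dimension $1$.
\item By the fibre dimension theorem, or equivalently the standard formula $\dim G\cdot L=\dim G-\dim\Stab_G(L)$ for algebraic group actions, the orbit has dimension $r(\mathbf{m})-1$.
\item The orbit is locally closed in the irreducible variety $\Gr_\ell(V)$, and its closure is an irreducible closed subset. Hence $\dim(G\cdot L)\le\dim\Gr_\ell(V)=\ell(h(\mathbf{m})-\ell)$.
\end{enumerate}

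The second inequality is then elementary. The quantity $\ell(h-\ell)$, viewed as a function of the integer $\ell$, is maximized at $\ell=\lfloor h/2\rfloor$, where it equals $\lfloor h^2/4\rfloor$.

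\textbf{Main obstacle.} The delicate step is step 4, the orbit dimension formula. The paper has so far only recalled elementary topological dimension theory. I would either cite the standard result (Humphreys or Borel: orbits are locally closed and $\dim G\cdot x=\dim G-\dim G_x$), or argue more elementarily via tangent spaces.

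For the tangent-space argument, the differential of the orbit map at $\operatorname{id}$ is
\[
\End_B(N_{\mathbf{m}})\to\Hom_{\Bbbk}(L,V/L),\qquad f\mapsto\bigl(\varphi\mapsto f\circ\varphi+L\bigr).
\]
Its kernel is exactly $\Stab_{\End_B(N_{\mathbf{m}})}(L)=\Bbbk\operatorname{id}$, so its rank is $r(\mathbf{m})-1$. The target $\Hom_{\Bbbk}(L,V/L)$ is the tangent space of the Grassmannian, of dimension $\ell(h-\ell)$. An injective linear map $\End_B(N_{\mathbf{m}})/\Bbbk\operatorname{id}\hookrightarrow\Hom_{\Bbbk}(L,V/L)$ already yields the inequality directly, with no geometry needed.

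In fact this linear-algebra version is the cleanest route, and I would present it as the main proof: the map $f\mapsto(\varphi\mapsto f\varphi+L)$ has kernel $\Stab(L)$, so $r(\mathbf{m})-1\le\ell(h(\mathbf{m})-\ell)$.
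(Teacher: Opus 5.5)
Your proposal is correct. Steps 1--5 match the paper's proof. There, the stabilizer of $\Im\eta$ in $\Aut_B(N)$ is $\Bbbk^{\times}\operatorname{id}_N$, and $\Aut_B(N)$ is a nonempty open subset of $\End_B(N)$, so it has dimension $r(\mathbf{m})$. The orbit is locally closed, and the orbit--stabilizer formula (cited from Humphreys and Brion) gives an orbit of dimension $r(\mathbf{m})-1$ inside $\Gr_\ell(\Hom_B(E,N))$.

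The argument you promote to your main proof is a genuinely different route. You show directly that the map $\End_B(N)/\Bbbk\operatorname{id}_N\to\Hom_{\Bbbk}(L,\Hom_B(E,N)/L)$, $f\mapsto(\varphi\mapsto f\circ\varphi+L)$, is injective. Its kernel is $\Stab_{\End_B(N)}(L)/\Bbbk\operatorname{id}_N=0$ by Proposition~\ref{prop::mixed-brick-criterion}, and comparing dimensions gives the inequality. The paper introduces this map, $\overline{\partial}_L$, only afterwards in the tangent-map subsection of Section~\ref{Section-3}, as an explanation of the inequality rather than as its proof.

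Your route is shorter and self-contained. It uses no algebraic-group facts (local closedness of orbits, orbit--stabilizer, the dimension formula for Grassmannians). It does not need to identify the target with a tangent space of the Grassmannian, and the inequality follows over any field.

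What the paper's geometric route buys is the sharper fact that every $\Aut_B(N_{\mathbf{m}})$-orbit in $\mathcal{L}(\mathbf{m},\ell)$ has dimension exactly $r(\mathbf{m})-1$. This is reused in Theorem~\ref{thm::critical-pair-one-orbit}, where in the critical case the orbit is open in $\mathcal{L}(\mathbf{m},\ell)$. It is also reused in Proposition~\ref{prop:strict-infinite-bricks}, where in the strict case finitely many orbits cannot cover $\mathcal{L}(\mathbf{m},\ell)$. So if you prove the theorem by linear algebra, you would still need your steps 1--4 later.
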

\begin{proof}
By Lemma~\ref{lem:eta-injective}, $\eta$ is injective.
Hence, $0\neq \Im\eta\in \Gr_{\ell}(\Hom_B(E,N))$. Recall from Proposition~\ref{prop::mixed-brick-criterion} that $\Stab_{\End_B(N)}(\Im\eta)=\Bbbk\operatorname{id}_{N}$. Now consider the stabilizer of $\Im\eta$ inside the automorphism group:
\[
\Stab_{\Aut_B(N)}(\Im\eta):=
\left\{
f\in\Aut_B(N) \ \middle|\ f_*(\Im\eta)=\Im\eta
\right\}.
\]
Since $f\in\Aut_B(N)$ is invertible, $f_*(\Im\eta)=\Im\eta$ is equivalent to $f_*(\Im\eta)\subseteq\Im\eta$ (the two subspaces have the same dimension). Therefore, any such $f$ must be a non-zero scalar multiple of $\operatorname{id}_{N}$. Consequently, $\Stab_{\Aut_B(N)}(\Im\eta)=\Bbbk^{\times}\operatorname{id}_{N}$. Since $\Bbbk^{\times}$ is isomorphic to $\mathbb{A}^1\setminus\{0\}$, we have $\dim\Stab_{\Aut_B(N)}(\Im\eta)=1$.

The automorphism group $\Aut_B(N)$ is the subset of $\End_B(N)$ consisting of those $B$-endomorphisms which are invertible as $B$-module endomorphisms. Hence, it is Zariski-open (see Example \ref{ex::open-set}). Since $\End_B(N)$ is an affine space of dimension $r(\mathbf{m})$, it is irreducible. Since $\Aut_B(N)$ is a nonempty Zariski-open subset of $\End_B(N)$, Proposition~\ref{prop:open-subset-dim} gives $\dim\Aut_B(N)=\dim\End_B(N)=r(\mathbf{m})$. Recall that the algebraic group $\Aut_B(N)$ acts on the Grassmannian $\Gr_{\ell}(\Hom_B(E,N))$ by $f\cdot L=f_*(L)$. For the subspace $\Im\eta$, the orbit $\Aut_B(N)\cdot \Im\eta$ is a locally closed subset by \cite[II. Proposition 8.3]{Humphreys-book-1975}. Then, the Orbit–Stabilizer Theorem (see, for example, \cite[Proposition 1.11]{Brion-actions-2010}) for algebraic group actions gives
\[
\dim\bigl(\Aut_B(N)\cdot\Im\eta\bigr)
= \dim\Aut_B(N)-\dim\Stab_{\Aut_B(N)}(\Im\eta)
= r(\mathbf{m})-1.
\]
Since the orbit is contained in $\Gr_{\ell}(\Hom_B(E,N))$, by Proposition \ref{prop::grassmannian-dimension}, we have
\[
r(\mathbf{m})-1 \leq \dim\Gr_{\ell}(\Hom_B(E,N))=\ell\bigl(h(\mathbf{m})-\ell\bigr).
\]
This yields the first inequality. The second inequality follows from the elementary bound $\ell(h-\ell)\leq h^2/4$ for any integer $h$.
\end{proof}

The dimension inequality in Theorem~\ref{thm:dim-conditions} records only the data $\ell$, $\mathbf{m}$, $h(\mathbf{m})$, and $r(\mathbf{m})$, but not the map $\eta\colon W\to\Hom_B(E,N)$, nor the position of $\Im\eta$ inside $\Hom_B(E,N)$. However, by Proposition~\ref{prop::mixed-brick-criterion}, the mixed brick condition depends precisely on these finer data: $\eta$ must be injective, and the stabilizer of $\Im\eta$ in $\End_B(N)$ must be exactly $\Bbbk\operatorname{id}_{N}$. This motivates us to separate the numerical obstruction from the geometric realization problem. We therefore introduce the following terminology.

\begin{definition}
Suppose $0\neq \mathbf{m} \in\mathbb{Z}_{\geq 0}^t$ and $1\leq\ell\leq h(\mathbf{m})$. A pair $(\mathbf{m},\ell)$ is called \emph{admissible} if $r(\mathbf{m})-1\leq \ell\bigl(h(\mathbf{m})-\ell\bigr)$. An admissible pair is called \emph{strict} if the inequality is strict, and \emph{critical} if equality holds. We say that $(\mathbf{m},\ell)$ is \emph{realized by a mixed brick}, or \emph{realizable}, if there exists a mixed brick $M=(N,W,\eta)$ with $N\cong N_{\mathbf{m}}$ and $\dim_{\Bbbk}W=\ell$.
\end{definition}

The following example verifies that admissibility is only a numerical condition: even for fixed $(\mathbf{m},\ell)$, different choices of $\eta$ may lead to different brick-theoretic behavior.
\begin{example}\label{ex::same-data-different-eta}
Let $B=\Bbbk$ and $E=\Bbbk^2$.
Then, $\Lambda=B[E]$ is the Kronecker algebra. There is only one indecomposable $B$-module up to isomorphism, namely $X=\Bbbk$.
Take $N=\Bbbk$ and $W=\Bbbk$. Then, $\ell=1$, $\mathbf{m}=(1)$, $h(\mathbf{m})=\dim_{\Bbbk}\Hom_{\Bbbk}(\Bbbk^2,\Bbbk)=2$ and $r(\mathbf{m})=1$, so that
\[
r(\mathbf{m})-1=0\leq1=\ell\bigl(h(\mathbf{m})-\ell\bigr).
\]
If $\eta_0\colon \Bbbk\to\Hom_B(\Bbbk^2,\Bbbk)$ is the zero map, then $\eta_0$ is not injective, so $M_0=(\Bbbk,\Bbbk,\eta_0)$ is not a mixed brick by Proposition~\ref{prop::mixed-brick-criterion}.
On the other hand, choose a nonzero map $p\colon \Bbbk^2\to\Bbbk$ and define $\eta_1\colon \Bbbk\to\Hom_B(\Bbbk^2,\Bbbk)$ by $\eta_1(1)=p$.
Then, $\eta_1$ is injective. Since $\End_B(\Bbbk)=\Bbbk\operatorname{id}_{N}$, the stabilizer condition in Proposition~\ref{prop::mixed-brick-criterion} holds automatically, and hence $M_1=(\Bbbk,\Bbbk,\eta_1)$ is a mixed brick. 
\end{example}

We then focus on the boundary case, where the above dimension inequality becomes an equality. The next two examples show that critical admissible pairs are restricted but not automatically realized.

\begin{example}\label{ex:critical-unrealized}
(A critical admissible pair may not be realized by a mixed brick.)
Let $B=\Bbbk[\varepsilon]/(\varepsilon^3)$ and $S$ be the simple $B$-module. Set $E=S^{\oplus 3}$ and take $\mathbf{m}=(\cdots, 0, 1,0,\cdots)$ with $N_{\mathbf{m}}=B$. Then, $r(\mathbf{m})=3$.
Moreover, $\Hom_B(S,N)\cong\soc N$ is one-dimensional, since $\soc N=\varepsilon^2B$.
Hence, $h(\mathbf{m})=3$ by $\Hom_B(E,N)\cong\Hom_B(S,N)^3$. For $\ell=1$, we have
\[
r(\mathbf{m})-1=2=\ell\bigl(h(\mathbf{m})-\ell\bigr).
\]
Thus, $(\mathbf{m},1)$ is a critical admissible pair. We claim that $(\mathbf{m},1)$ is not realized by any mixed brick. To see this, let $f\colon N\to N$ be the endomorphism given by multiplication by $\varepsilon$. Then, $f$ is nonzero and is not a scalar multiple of the identity. For every $\varphi\in\Hom_B(E,N)$, the image of $\varphi$ is contained in $\soc N=\varepsilon^2B$, and hence $f\circ\varphi=0$.
Namely, $f_*$ acts as zero on $\Hom_B(E,N)$.
By Lemma \ref{lem:zero-action}, no such module can be a mixed brick.
\end{example}

\begin{example}
(A critical admissible pair may be realized by a unique mixed brick.)
Let $B=E=N=W=\Bbbk$. Then, $h(\mathbf{m})=r(\mathbf{m})=\ell=1$, so $r(\mathbf{m})-1=0=\ell\bigl(h(\mathbf{m})-\ell\bigr)$. For any nonzero map $\eta\colon W\to\Hom_B(E,N)$, the module $M=(N,W,\eta)$ is a mixed brick. In this case, $\Gr_1(\Hom_B(E,N))$ consists of a single point, and therefore there is only one orbit and one isomorphism class.
\end{example}

At the end of this subsection, we mention the boundary case. If $(\mathbf{m},h(\mathbf{m}))$ is admissible, then $r(\mathbf{m})-1\leq0$, hence $r(\mathbf{m})=1$. Thus, $N_{\mathbf{m}}$ is a brick in $\mod B$, and the pair is critical.
Moreover, in this case, the only possible subspace is $L=\Hom_B(E,N_{\mathbf{m}})$, and $\Stab_{\End_B(N_{\mathbf{m}})}(L)=\End_B(N_{\mathbf{m}})$. Hence, $(\mathbf{m},h(\mathbf{m}))$ is realized by a mixed brick. In particular, strict admissible pairs can occur only for $1\leq\ell\leq h(\mathbf{m})-1$.

\subsection{Critical admissible pairs}
The two examples at the end of the last subsection suggest that critical admissible pairs exhibit a certain rigidity. Although such a pair need not be realized by a mixed brick, once it is realized, the corresponding mixed brick is expected to be unique up to isomorphism. This expectation is related to the critical equality, which suggests that the locus in the Grassmannian cut out by the mixed-brick criterion should contain at most one $\Aut_B(N_{\mathbf{m}})$-orbit. We will discuss this phenomenon more systematically in this subsection.

We continue to assume that $B$ is $E$-visible finite. For a nonzero vector $\mathbf{m}\in\mathbb{Z}_{\geq 0}^t$ and an integer $1\leq \ell\leq h(\mathbf{m})$, we define
\[
\mathcal{L}(\mathbf{m},\ell)
:=
\left\{
L\in \Gr_{\ell}\bigl(\Hom_B(E,N_{\mathbf{m}})\bigr)
\ \middle|\
\Stab_{\End_B(N_{\mathbf{m}})}(L)
=
\Bbbk\operatorname{id}_{N_{\mathbf{m}}}
\right\}.
\]
The group $\Aut_B(N_{\mathbf{m}})$ acts on $\mathcal{L}(\mathbf{m},\ell)$ by $f\cdot L:=f_*(L)$. Indeed, the stabilizer condition is preserved under the natural action of $\Aut_B(N_{\mathbf{m}})$ on $\Hom_B(E,N_{\mathbf{m}})$. The first point is that $\mathcal{L}(\mathbf{m},\ell)$ is not only set-theoretic, but is in fact an open subset of the Grassmannian.

\begin{lemma}\label{lem::Lml-open}
Let $0\neq \mathbf{m}\in\mathbb{Z}_{\geq 0}^t$ and $1\leq\ell\leq h(\mathbf{m})$. Then, the subset
\[
\mathcal{L}(\mathbf{m},\ell)
\subseteq
\Gr_{\ell}\bigl(\Hom_B(E,N_{\mathbf{m}})\bigr)
\]
is Zariski-open.
\end{lemma}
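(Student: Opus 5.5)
The plan is to express the stabilizer as the kernel of a linear map whose matrix depends polynomially on local coordinates of the Grassmannian. Openness then follows from the rank semicontinuity recorded in Example~\ref{ex::open-set}.

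Write $V:=\Hom_B(E,N_{\mathbf{m}})$, $h:=h(\mathbf{m})$, $r:=r(\mathbf{m})$, and fix a basis $f_1,\ldots,f_r$ of $\End_B(N_{\mathbf{m}})$.

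\emph{Step 1: reduction to a rank condition.} For every $L\in\Gr_\ell(V)$, the stabilizer $\Stab_{\End_B(N_{\mathbf{m}})}(L)$ is a linear subspace of $\End_B(N_{\mathbf{m}})$. It always contains $\Bbbk\operatorname{id}_{N_{\mathbf{m}}}$, and this line is one-dimensional since $N_{\mathbf{m}}\neq 0$. Hence
\[
L\in\mathcal{L}(\mathbf{m},\ell)\iff \dim_{\Bbbk}\Stab_{\End_B(N_{\mathbf{m}})}(L)\leq 1.
\]
So it suffices to show that $\{L\mid \dim\Stab(L)\leq 1\}$ is open.

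\emph{Step 2: local description of the stabilizer.} Cover $\Gr_\ell(V)$ by the standard affine charts $\mathcal{L}_I\cong\mathbb{A}^{\ell(h-\ell)}$ from the proof of Proposition~\ref{prop::grassmannian-dimension}. A subset of $\Gr_\ell(V)$ is open exactly when its intersection with each chart is open, so it suffices to work on one chart $\mathcal{L}_I$. On $\mathcal{L}_I$ the subspace $L=L_{\mathbf{a}}$ is the column space of an $h\times\ell$ matrix $A(\mathbf{a})$. This matrix has an identity block in the rows indexed by $I$, and its other entries are the coordinates $\mathbf{a}$. In the same chart, $V/L_{\mathbf{a}}$ is identified with the complementary coordinate subspace, indexed by the complement $I^c$ of $I$. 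The projection $q_{\mathbf{a}}\colon V\to \Bbbk^{h-\ell}$ with kernel $L_{\mathbf{a}}$ is given by the matrix $\bigl(-A''(\mathbf{a})\ \ \mathrm{id}\bigr)$ after reordering coordinates. Here $A''(\mathbf{a})$ denotes the rows of $A(\mathbf{a})$ indexed by $I^c$. All entries are polynomial in $\mathbf{a}$. Define
\[
\Phi_{\mathbf{a}}\colon \Bbbk^r\longrightarrow \Hom_{\Bbbk}(\Bbbk^\ell,\Bbbk^{h-\ell}),\qquad (\lambda_1,\ldots,\lambda_r)\longmapsto q_{\mathbf{a}}\circ\Bigl(\sum_j\lambda_j\rho(f_j)\Bigr)\circ A(\mathbf{a}),
\]
where $\rho(f_j)=(f_j)_*$ is a fixed $h\times h$ matrix. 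Then $f=\sum\lambda_jf_j$ satisfies $f_*(L_{\mathbf{a}})\subseteq L_{\mathbf{a}}$ exactly when $\Phi_{\mathbf{a}}(\lambda)=0$. So $\Stab(L_{\mathbf{a}})\cong\ker\Phi_{\mathbf{a}}$, and the matrix entries of $\Phi_{\mathbf{a}}$ are polynomial in $\mathbf{a}$.

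\emph{Step 3: conclusion.} By the parametrized form of Example~\ref{ex::open-set} with $d=1$, the set $\{\mathbf{a}\mid\dim\ker\Phi_{\mathbf{a}}\leq1\}$ is Zariski-open in $\mathcal{L}_I$. Equivalently, it is the locus where some $(r-1)\times(r-1)$ minor of $\Phi_{\mathbf{a}}$ is nonzero. Taking the union over all charts $I$ shows that $\mathcal{L}(\mathbf{m},\ell)$ is open.

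The main obstacle is Step 2: verifying that the chart coordinates really make both $L_{\mathbf{a}}$ and the quotient map $q_{\mathbf{a}}$ polynomial, and that the charts cover the Grassmannian compatibly with its Pl\"ucker topology. As a fallback, I would avoid charts entirely. The condition $f_*(L)\subseteq L$ is equivalent to $f_*v_i\wedge v_1\wedge\cdots\wedge v_\ell=0$ for $i=1,\ldots,\ell$, where $v_1,\ldots,v_\ell$ is a basis of $L$. These conditions are linear in $\lambda$ with coefficients that are bihomogeneous in the Pl\"ucker coordinates. The nonvanishing of suitable minors would then give homogeneous conditions defining the open set.
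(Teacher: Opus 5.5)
Your proposal is correct and follows essentially the same route as the paper: on each standard affine chart you realize $\Stab_{\End_B(N_{\mathbf{m}})}(L)$ as the kernel of a linear map whose entries are polynomial in the chart coordinates. Your $q_{\mathbf{a}}\circ\bigl(\sum_j\lambda_j\rho(f_j)\bigr)\circ A(\mathbf{a})$ expands to exactly the paper's $\Psi_a(f)=F_{21}+F_{22}a-a(F_{11}+F_{12}a)$, and, as in the paper, you conclude from Example~\ref{ex::open-set} using that $\Bbbk\operatorname{id}_{N_{\mathbf{m}}}$ always lies in the stabilizer, so the Pl\"ucker fallback is not needed.
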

\begin{proof} 
We prove the assertion on each standard affine chart (see $\mathcal{L}_I$ in Proposition \ref{prop::grassmannian-dimension}), denoted by $\Omega$, of the Grassmannian $\Gr_\ell(\Hom_B(E,N_{\mathbf{m}}))$. Let $\Hom_B(E,N_{\mathbf{m}})=H'\oplus H''$ be a decomposition with $\dim_{\Bbbk} H'=\ell$. Then, each point $L\in\Omega$, viewed as an $\ell$-dimensional subspace of
$\Hom_B(E,N_{\mathbf{m}})$, can be uniquely written as $L=\Gamma(a):=\{u+a(u)\mid u\in H'\}$ for some linear map $a\colon H'\to H''$. For any $f\in\End_B(N_{\mathbf{m}})$, we write
\[ 
f_*= \begin{pmatrix} F_{11}&F_{12}\\ F_{21}&F_{22} \end{pmatrix}\colon  
\begin{pmatrix} H'\\ H'' \end{pmatrix}\longrightarrow 
\begin{pmatrix} H'\\ H'' \end{pmatrix},
\]
for the linear map induced by post-composition. Then, for any $u\in H'$, we have 
\[
f_*(u+a(u)) = \bigl(F_{11}(u)+F_{12}(a(u)),\, F_{21}(u)+F_{22}(a(u))\bigr), 
\]
and this vector belongs to $\Gamma(a)$ if and only if its second component is obtained by applying $a$ to its first component. More precisely, the condition $f_*(\Gamma(a))\subseteq \Gamma(a)$ is equivalent to $F_{21}+F_{22}\circ a-a\circ (F_{11}+F_{12}\circ a)=0$. Hence, for each $a\in\Hom_{\Bbbk}(H',H'')$, we obtain a linear map 
\[
\begin{aligned}
\Psi_a\colon \End_B(N_{\mathbf{m}}) &\longrightarrow \Hom_{\Bbbk}(H',H''),\\
f &\longmapsto F_{21}+F_{22}\circ a-a\circ (F_{11}+F_{12}\circ a).
\end{aligned}
\]
By our construction, $\Stab_{\End_B(N_{\mathbf{m}})}(\Gamma(a))=\ker\Psi_a$. Since the identity endomorphism $\operatorname{id}_{N_{\mathbf{m}}}$ preserves $\Gamma(a)$, $\operatorname{id}_{N_{\mathbf{m}}}\in\ker\Psi_a$ and $\dim_{\Bbbk}\ker\Psi_a\geq1$ for all $a$.
 
After choosing bases of $\End_B(N_{\mathbf{m}})$, $H'$ and $H''$, the matrix entries of $\Psi_a$ are polynomial functions in the coordinates of $a$. By the rank condition recorded in Example~\ref{ex::open-set}, the subset $\left\{ a\in \Hom_{\Bbbk}(H',H'') \ \middle|\ \dim_{\Bbbk}\ker\Psi_a\leq 1 \right\} $ is Zariski-open in the affine chart $\Omega$. In particular, the conditions $\dim_{\Bbbk}\ker\Psi_a\leq1$ and $\dim_{\Bbbk}\ker\Psi_a=1$ are equivalent. Moreover, $\Stab_{\End_B(N_{\mathbf{m}})}(L)=\Bbbk\operatorname{id}_{N_{\mathbf{m}}}$ if and only if its dimension is $1$. It follows that 
\[
\mathcal{L}(\mathbf{m},\ell)\cap\Omega
=
\left\{
\Gamma(a)\in\Omega
\ \middle|\
\dim_{\Bbbk}\ker\Psi_a=1
\right\}
=
\left\{
\Gamma(a)\in\Omega
\ \middle|\
\dim_{\Bbbk}\ker\Psi_a\leq1
\right\}
\]
is Zariski-open in $\Omega$. Since the standard affine charts cover $\Gr_\ell\bigl(\Hom_B(E,N_{\mathbf{m}})\bigr)$, we conclude that $\mathcal{L}(\mathbf{m},\ell)$ is Zariski-open in $\Gr_\ell(\Hom_B(E,N_{\mathbf{m}}))$. 
\end{proof}

By Proposition~\ref{prop::mixed-brick-criterion}, each subspace $L\in\mathcal{L}(\mathbf{m},\ell)$ gives a mixed brick $(N_{\mathbf{m}},L,\iota)$, where $\iota\colon L\hookrightarrow\Hom_B(E,N_{\mathbf{m}})$ is the natural inclusion. The remaining question is how to identify isomorphic mixed bricks among these subspaces; to answer this, we give a refined version of Proposition~\ref{prop::iso-orbit} as follows.

\begin{proposition}\label{prop::mixed-bricks-orbits} 
Let $0\neq \mathbf{m}\in\mathbb{Z}_{\geq 0}^t$ and $1\leq\ell\leq h(\mathbf{m})$. Then, the isomorphism classes of mixed bricks $M=(N,W,\eta)\in \mod \Lambda$ satisfying $N\cong N_{\mathbf{m}}$ and $\dim_{\Bbbk} W=\ell$ are in bijection with the $\Aut_B(N_{\mathbf{m}})$-orbits in $\mathcal{L}(\mathbf{m},\ell)$.
\end{proposition}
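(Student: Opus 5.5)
The plan is to define an explicit map from mixed bricks to orbits and check that it is well defined, injective and surjective, using Proposition~\ref{prop::mixed-brick-criterion} and Proposition~\ref{prop::iso-orbit}.

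\emph{Construction of the map.} Let $M=(N,W,\eta)$ be a mixed brick with $N\cong N_{\mathbf{m}}$ and $\dim_{\Bbbk}W=\ell$. Choose a $B$-isomorphism $\sigma\colon N\xrightarrow{\sim}N_{\mathbf{m}}$. Transport $M$ along $\sigma$ to $M^\sigma:=(N_{\mathbf{m}},W,\sigma_*\circ\eta)$. The pair $(\sigma,\operatorname{id}_W)$ is an isomorphism $M\to M^\sigma$, since $\sigma_*\circ\eta=(\sigma_*\eta)\circ\operatorname{id}_W$. Hence $M^\sigma$ is again a mixed brick. By Lemma~\ref{lem:eta-injective}, $\sigma_*\eta$ is injective, so $L_\sigma:=\Im(\sigma_*\eta)$ is $\ell$-dimensional. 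By Proposition~\ref{prop::mixed-brick-criterion}, its stabilizer in $\End_B(N_{\mathbf{m}})$ is $\Bbbk\operatorname{id}$, so $L_\sigma\in\mathcal{L}(\mathbf{m},\ell)$. We send $[M]$ to the orbit $\Aut_B(N_{\mathbf{m}})\cdot L_\sigma$.

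\emph{Well-definedness.} Two things must be checked.
\begin{enumerate}
\item Independence of $\sigma$: another choice is $\sigma'=g\sigma$ with $g\in\Aut_B(N_{\mathbf{m}})$. Then $L_{\sigma'}=g_*(L_\sigma)$, which lies in the same orbit.
\item Independence of the representative $M$: if $M\cong M'$ via $(f,g)$, then $f_*\circ\eta=\eta'\circ g$. Composing a chosen $\sigma'$ for $M'$ with $f$ gives a valid choice $\sigma'f$ for $M$ with the same image subspace, since $\Im(\sigma'_*f_*\eta)=\sigma'_*\Im(\eta' g)=\sigma'_*\Im\eta'$.
\end{enumerate}

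\emph{Injectivity.} If $M$ and $M'$ give the same orbit, then $M^\sigma$ and $M'^{\sigma'}$ are modules with the same $B$-part $N_{\mathbf{m}}$ and injective structure maps. Their images lie in one $\Aut_B(N_{\mathbf{m}})$-orbit, so Proposition~\ref{prop::iso-orbit} gives $M^\sigma\cong M'^{\sigma'}$. Hence $M\cong M'$.

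\emph{Surjectivity.} Given $L\in\mathcal{L}(\mathbf{m},\ell)$, take $M_L=(N_{\mathbf{m}},L,\iota)$ with $\iota$ the inclusion. It is a mixed brick by Proposition~\ref{prop::mixed-brick-criterion}, since $L\neq0$ because $\ell\ge1$ and $N_{\mathbf{m}}\ne0$. With $\sigma=\operatorname{id}$ it maps to the orbit of $L$.

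Also note that $\mathcal{L}(\mathbf{m},\ell)$ is $\Aut_B(N_{\mathbf{m}})$-stable: for $g$ invertible, $\Stab(g_*L)=g\,\Stab(L)\,g^{-1}$, so orbits inside $\mathcal{L}(\mathbf{m},\ell)$ make sense.

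There is no real obstacle here. The only point requiring care is the bookkeeping in the well-definedness step, namely transporting along the non-canonical identification $N\cong N_{\mathbf{m}}$, since Proposition~\ref{prop::iso-orbit} is stated only for a fixed $B$-module part.
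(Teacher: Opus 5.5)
Your proof is correct and follows essentially the same route as the paper: transport along a chosen isomorphism $N\cong N_{\mathbf{m}}$, check independence of both the choice and the representative, use Proposition~\ref{prop::mixed-brick-criterion} for surjectivity, and use Proposition~\ref{prop::iso-orbit} for injectivity. One small gain: the paper only compares the canonical representatives $(N_{\mathbf{m}},L,\iota)$ and leaves the identification $M\cong(N_{\mathbf{m}},L,\iota)$ implicit, whereas your transported module $M^\sigma$ makes that step explicit.
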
 
\begin{proof} 
Let $M=(N,W,\eta)$ be a mixed brick such that $N\cong N_{\mathbf{m}}$ and $\dim_{\Bbbk} W=\ell$. By Lemma~\ref{lem:eta-injective}, the map $\eta$ is injective. Choose an isomorphism $\alpha\colon N\to N_{\mathbf{m}}$. Then, $\alpha$ induces a linear isomorphism $\alpha_*\colon \Hom_B(E,N)\to \Hom_B(E,N_{\mathbf{m}})$ by post-composition. Set $L:=\alpha_*(\Im\eta)\subseteq \Hom_B(E,N_{\mathbf{m}})$. Then, $L$ is an $\ell$-dimensional subspace. Since $M$ is a mixed brick, Proposition~\ref{prop::mixed-brick-criterion} gives $\Stab_{\End_B(N)}(\Im\eta)=\Bbbk\operatorname{id}_{N}$. Conjugating by the isomorphism $\alpha$, we obtain
\[
\Stab_{\End_B(N_{\mathbf{m}})}(L) = \alpha\,\Stab_{\End_B(N)}(\Im\eta)\,\alpha^{-1} = \Bbbk\operatorname{id}_{N_{\mathbf{m}}}.
\]
Hence, $L\in\mathcal{L}(\mathbf{m},\ell)$. If $\alpha'\colon N\to N_{\mathbf{m}}$ is another isomorphism, then $\alpha'=f\circ \alpha$ for some $f\in\Aut_B(N_{\mathbf{m}})$, and hence $\alpha'_*(\Im\eta)=f_*\alpha_*(\Im\eta)=f_*(L)$. It follows that every mixed brick of the prescribed type determines a well-defined $\Aut_B(N_{\mathbf{m}})$-orbit in $\mathcal{L}(\mathbf{m},\ell)$. 

Let $M'=(N',W',\eta')$ be another mixed brick of the prescribed type, and suppose that $(f,g)\colon M\to M'$ is an isomorphism. Then, $f\colon N\to N'$ is an isomorphism, and the compatibility condition gives $f_*(\Im\eta)=\Im\eta'$. If $\beta\colon N'\to N_{\mathbf{m}}$ is an isomorphism and $L':=\beta_*(\Im\eta')$, then $L'=\beta_*f_*(\Im\eta)=(\beta f\alpha^{-1})_*(L)$. Since $\beta f\alpha^{-1}\in\Aut_B(N_{\mathbf{m}})$, the subspaces $L$ and $L'$ lie in the same $\Aut_B(N_{\mathbf{m}})$-orbit.

Let $L\in\mathcal{L}(\mathbf{m},\ell)$ and $\iota\colon L\hookrightarrow \Hom_B(E,N_{\mathbf{m}})$ be the natural inclusion. Then,  Proposition~\ref{prop::mixed-brick-criterion} implies that $(N_{\mathbf{m}},L,\iota)$ is a mixed brick. It remains to identify when two subspaces give isomorphic mixed bricks. Let $L,L'\in\mathcal{L}(\mathbf{m},\ell)$, and consider the mixed bricks $(N_{\mathbf{m}},L,\iota)$ and $(N_{\mathbf{m}},L',\iota')$, where $\iota$ and $\iota'$ are the natural inclusions. By Proposition~\ref{prop::iso-orbit}, these two mixed bricks are isomorphic if and only if $L$ and $L'$ lie in the same $\Aut_B(N_{\mathbf{m}})$-orbit in $\Gr_\ell(\Hom_B(E,N_{\mathbf{m}}))$, which is equivalent to saying that they lie in the same $\Aut_B(N_{\mathbf{m}})$-orbit in $\mathcal{L}(\mathbf{m},\ell)$. 
\end{proof}

We combine the openness of $\mathcal{L}(\mathbf{m},\ell)$ with the orbit description to obtain the following result.

\begin{theorem}\label{thm::critical-pair-one-orbit}
Let $(\mathbf{m},\ell)$ be a critical admissible pair. Then,
$\mathcal{L}(\mathbf{m},\ell)$ is either empty or a single
$\Aut_B(N_{\mathbf{m}})$-orbit, and the mixed bricks realizing $(\mathbf{m},\ell)$ form at most one isomorphism class.
\end{theorem}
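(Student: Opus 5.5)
Assume $\mathcal{L}(\mathbf{m},\ell)\neq\varnothing$ and fix some $L\in\mathcal{L}(\mathbf{m},\ell)$. The plan is to show that the orbit $\Aut_B(N_{\mathbf{m}})\cdot L$ is open in the Grassmannian $\mathcal{G}:=\Gr_\ell\bigl(\Hom_B(E,N_{\mathbf{m}})\bigr)$. Irreducibility of $\mathcal{G}$ will then force any two such orbits to meet, so they coincide.

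\emph{Step 1: dimension of an orbit.} We repeat the computation from the proof of Theorem~\ref{thm:dim-conditions}. Since $L\in\mathcal{L}(\mathbf{m},\ell)$, any $f\in\Aut_B(N_{\mathbf{m}})$ with $f_*(L)=L$ is a scalar, so $\Stab_{\Aut_B(N_{\mathbf{m}})}(L)=\Bbbk^\times\operatorname{id}$. Moreover $\dim\Aut_B(N_{\mathbf{m}})=r(\mathbf{m})$. The orbit-stabilizer theorem gives $\dim(\Aut_B(N_{\mathbf{m}})\cdot L)=r(\mathbf{m})-1$, and by criticality this equals $\ell(h(\mathbf{m})-\ell)=\dim\mathcal{G}$ (Proposition~\ref{prop::grassmannian-dimension}).

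\emph{Step 2: a full-dimensional orbit is open.} The orbit $\mathcal{O}_L:=\Aut_B(N_{\mathbf{m}})\cdot L$ is locally closed by \cite[II. Proposition 8.3]{Humphreys-book-1975}. It is also irreducible, being the image of the irreducible group $\Aut_B(N_{\mathbf{m}})$, which is a nonempty open subset of an affine space. Hence its closure $\overline{\mathcal{O}_L}$ is an irreducible closed subset of $\mathcal{G}$ of dimension $\dim\mathcal{G}$. Since $\mathcal{G}$ is irreducible of finite dimension, this forces $\overline{\mathcal{O}_L}=\mathcal{G}$: a proper irreducible closed subset would give a strictly longer chain, so its dimension would be strictly smaller. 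Local closedness means $\mathcal{O}_L$ is open in its closure, so $\mathcal{O}_L$ is a nonempty open subset of $\mathcal{G}$. This is the step I expect to need the most care. The points to justify are that the orbit is irreducible and that the dimension of the orbit equals the dimension of its closure. For the latter, I would use Proposition~\ref{prop:open-subset-dim} applied to $\mathcal{O}_L$, which is open in the irreducible variety $\overline{\mathcal{O}_L}$.

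\emph{Step 3: uniqueness.} Let $L'\in\mathcal{L}(\mathbf{m},\ell)$ be arbitrary. By Steps 1 and 2, $\mathcal{O}_{L'}$ is also a nonempty open subset of the irreducible space $\mathcal{G}$. Two nonempty open subsets of an irreducible space intersect, so $\mathcal{O}_L\cap\mathcal{O}_{L'}\neq\varnothing$. Distinct orbits are disjoint, hence $\mathcal{O}_L=\mathcal{O}_{L'}$. Thus $\mathcal{L}(\mathbf{m},\ell)$ is a single orbit. We only need $\mathcal{L}(\mathbf{m},\ell)$ to be $\Aut_B(N_{\mathbf{m}})$-stable, which was noted before Lemma~\ref{lem::Lml-open}. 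The final claim, that the mixed bricks realizing $(\mathbf{m},\ell)$ form at most one isomorphism class, then follows directly from the bijection in Proposition~\ref{prop::mixed-bricks-orbits}.
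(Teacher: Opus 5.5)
Your proof is correct and follows essentially the paper's argument: criticality gives each orbit of a point of $\mathcal{L}(\mathbf{m},\ell)$ dimension $r(\mathbf{m})-1=\ell\bigl(h(\mathbf{m})-\ell\bigr)$, so the orbit is open, and irreducibility forces any two such orbits to meet. The differences are minor: you work in the whole Grassmannian rather than inside $\mathcal{L}(\mathbf{m},\ell)$, so you never need Lemma~\ref{lem::Lml-open}, and you justify explicitly (via irreducibility of the orbit and the chain argument) the density step that the paper compresses into ``the orbit is open in its closure.''
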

\begin{proof} 
By Proposition~\ref{prop::mixed-bricks-orbits}, it is enough to prove that $\mathcal{L}(\mathbf{m},\ell)$ contains at most one $\Aut_B(N_{\mathbf{m}})$-orbit. If $\mathcal{L}(\mathbf{m},\ell)=\varnothing$, there is nothing to prove. We then assume that $\mathcal{L}(\mathbf{m},\ell)\neq\varnothing$. By Lemma~\ref{lem::Lml-open}, $\mathcal{L}(\mathbf{m},\ell)$ is a nonempty Zariski-open subset of $\Gr_\ell(\Hom_B(E,N_{\mathbf{m}}))$. Since the Grassmannian is irreducible, $\mathcal{L}(\mathbf{m},\ell)$ is also irreducible. Moreover, by Propositions~\ref{prop:open-subset-dim} and \ref{prop::grassmannian-dimension}, we have $\dim \mathcal{L}(\mathbf{m},\ell) = \dim \Gr_\ell(\Hom_B(E,N_{\mathbf{m}})) = \ell\bigl(h(\mathbf{m})-\ell\bigr)$. Let $L\in\mathcal{L}(\mathbf{m},\ell)$. Similar to the proof of Theorem \ref{thm:dim-conditions}, the orbit $\Aut_B(N_{\mathbf{m}})\cdot L$ is locally closed in $\Gr_\ell(\Hom_B(E,N_{\mathbf{m}}))$ and 
\[ 
\dim\bigl(\Aut_B(N_{\mathbf{m}})\cdot L\bigr) = \dim\Aut_B(N_{\mathbf{m}}) - \dim\Stab_{\Aut_B(N_{\mathbf{m}})}(L) = r(\mathbf{m})-1. 
\] 
Since $(\mathbf{m},\ell)$ is critical, we have $r(\mathbf{m})-1=\ell(h(\mathbf{m})-\ell)$, and  $\dim\bigl(\Aut_B(N_{\mathbf{m}})\cdot L\bigr) = \dim\mathcal{L}(\mathbf{m},\ell)$. Since the orbit is open in its closure, $\Aut_B(N_{\mathbf{m}})\cdot L$ is a nonempty Zariski-open subset of $\mathcal{L}(\mathbf{m},\ell)$.

Suppose that $\mathcal{L}(\mathbf{m},\ell)$ contains two distinct $\Aut_B(N_{\mathbf{m}})$-orbits. By the previous paragraph, both orbits are nonempty Zariski-open subsets of the irreducible space $\mathcal{L}(\mathbf{m},\ell)$. Hence, they must intersect. This contradicts the fact that distinct orbits are disjoint. Thus, $\mathcal{L}(\mathbf{m},\ell)$ contains at most one $\Aut_B(N_{\mathbf{m}})$-orbit.
\end{proof}

\subsection{Strict admissible pairs} 
We now turn to strict admissible pairs. In the critical case, the dimension count is exactly balanced, which leads to the uniqueness result for realized critical pairs. The strict case is different: the strict inequality creates a positive numerical gap, and one may expect this gap to be related to infinite phenomena. The goal of this subsection is to make this expectation precise at the numerical level. We show that the existence of one strict admissible pair forces the existence of infinitely many (strict) admissible pairs. Conversely, if there are infinitely many admissible pairs, then strict admissible pairs must occur. 

Before stating the result precisely, we extend $h$ and $r$ from $\mathbb{Z}_{\geq 0}^t$ to $\mathbb{R}_{\geq 0}^t$. For any $\mathbf{x}\in \mathbb{R}_{\geq 0}^t$, we define $h(\mathbf{x}):=\mathbf{d}\cdot\mathbf{x}$ and $r(\mathbf{x}):=\mathbf{x}^{\mathsf{T}}\mathsf{G}\mathbf{x}$, where
\[
\mathbf{d}:=(d_1,d_2,\ldots,d_t) \; \in \mathbb{Z}_{\geq 0}^t
\qquad\text{and}\qquad
\mathsf{G}:=\frac{\mathsf{C}+\mathsf{C}^{\mathsf{T}}}{2}.
\] 
We then define a quadratic form on $\mathbb{R}_{\geq 0}^t$ by $Q(\mathbf{x}):=4r(\mathbf{x})-h(\mathbf{x})^2$. Since $\mathbf{x}^{\mathsf{T}}\mathsf{C}\mathbf{x}
=
\mathbf{x}^{\mathsf{T}}\mathsf{G}\mathbf{x}$ for every
$\mathbf{x}\in\mathbb{R}^t$, this definition agrees with the previous one on $\mathbb{Z}_{\geq 0}^t$. In particular, for any $\mathbf{m}\in\mathbb{Z}_{\geq 0}^t$, we have $Q(\mathbf{m})=4r(\mathbf{m})-h(\mathbf{m})^2$. 

The next lemma gives the basic positivity consequence of the absence of strict admissible pairs. It is the key estimate that allows us to pass from the discrete set of multiplicity vectors to the positive cone.

\begin{lemma}\label{lem::no-strict-positive-Q}
Suppose that $B$ is $E$-visible finite and there is no strict admissible pair. Then, $Q(\mathbf{x})>0$ for every nonzero vector $\mathbf{x}\in\mathbb{R}_{\geq 0}^t$. 
\end{lemma}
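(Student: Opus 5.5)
The plan is to turn the hypothesis into a quantitative lower bound for $Q$ on integer vectors, and then transfer it to the real cone. Two ingredients do the transfer: homogeneity of $Q$ for negative values, and a first-order (minimiser) argument combined with Dirichlet approximation for the boundary case $Q=0$.

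\emph{Step 1: integer vectors.} Let $0\neq\mathbf{m}\in\mathbb{Z}_{\geq 0}^t$. Every $X_i$ is $E$-visible, so $d_i\geq 1$, and hence $h:=h(\mathbf{m})\geq 1$. Take $\ell:=\lfloor h/2\rfloor$ if $h\geq 2$, and $\ell=1$ if $h=1$; in both cases $1\leq \ell\leq h$.
\begin{itemize}
\item If $(\mathbf{m},\ell)$ is not admissible, then $r(\mathbf{m})-1>\ell(h-\ell)$.
\item If it is admissible, then it is critical, since no strict pairs exist, so $r(\mathbf{m})-1=\ell(h-\ell)$.
\end{itemize}
Either way $r(\mathbf{m})-1\geq \lfloor h^2/4\rfloor\geq (h^2-1)/4$. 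This gives $Q(\mathbf{m})=4r(\mathbf{m})-h^2\geq 3$ for all $0\neq\mathbf{m}\in\mathbb{Z}_{\geq 0}^t$. By homogeneity, $Q(\mathbf{x})>0$ for every nonzero rational $\mathbf{x}\geq 0$. By density of $\mathbb{Q}^t_{\geq0}$ and continuity of $Q$, it follows that $Q\geq 0$ on all of $\mathbb{R}^t_{\geq 0}$.

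\emph{Step 2: excluding $Q(\mathbf{x}_0)=0$.} Suppose $0\neq\mathbf{x}_0\geq 0$ satisfies $Q(\mathbf{x}_0)=0$. Since $Q\geq 0$ on the cone, $\mathbf{x}_0$ is a global minimiser of $Q$ on $\mathbb{R}^t_{\geq0}$.

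Let $S=\operatorname{supp}(\mathbf{x}_0)$, and let $\mathbf{y}$ be any real vector supported on $S$. For small $|s|$, the point $\mathbf{x}_0+s\mathbf{y}$ stays in the cone, because the coordinates of $\mathbf{x}_0$ on $S$ are positive. Writing $\beta(\mathbf{x},\mathbf{y}):=4\mathbf{x}^{\mathsf T}\mathsf G\mathbf{y}-h(\mathbf{x})h(\mathbf{y})$ for the symmetric bilinear form with $Q(\mathbf{x})=\beta(\mathbf{x},\mathbf{x})$, we get
\[
Q(\mathbf{x}_0+s\mathbf{y})=2s\,\beta(\mathbf{x}_0,\mathbf{y})+s^2Q(\mathbf{y}),
\]
since $Q(\mathbf{x}_0)=0$. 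For this to be $\geq 0$ for small $s$ of both signs, we need $\beta(\mathbf{x}_0,\mathbf{y})=0$ for all $\mathbf{y}$ supported on $S$.

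By Dirichlet's simultaneous approximation theorem applied to the coordinates of $\mathbf{x}_0$ on $S$, there are infinitely many integers $k\geq1$ and integer vectors $\mathbf{m}_k$ supported on $S$ such that $\mathbf{e}_k:=\mathbf{m}_k-k\mathbf{x}_0$ satisfies $\|\mathbf{e}_k\|_\infty\leq k^{-1/|S|}\to 0$. For $k$ large, $\mathbf{m}_k$ is nonnegative and nonzero, because $k\mathbf{x}_0$ has coordinates tending to $\infty$ on $S$. Now
\[
Q(\mathbf{m}_k)=k^2Q(\mathbf{x}_0)+2k\,\beta(\mathbf{x}_0,\mathbf{e}_k)+Q(\mathbf{e}_k)=Q(\mathbf{e}_k)\longrightarrow 0,
\]
which contradicts $Q(\mathbf{m}_k)\geq 3$ from Step 1. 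Hence $Q(\mathbf{x})>0$ for all nonzero $\mathbf{x}\in\mathbb{R}^t_{\geq0}$.

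The main obstacle is Step 2. Density and continuity alone only yield $Q\geq 0$, and naive rounding of $k\mathbf{x}_0$ leaves a cross term $2k\,\beta(\mathbf{x}_0,\mathbf{e}_k)$ of order $k$ that cannot be controlled. The idea that resolves this is the vanishing of the cross term along the face containing $\mathbf{x}_0$, which comes from the minimiser property. That vanishing lets the uniform gap $Q(\mathbf{m})\geq 3$ from Step 1 be used directly against the approximations $\mathbf{m}_k$.
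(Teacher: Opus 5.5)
Your proof is correct. Up to the first-order condition it runs parallel to the paper's argument:
\begin{itemize}
\item positivity on nonzero integer vectors, where your choice $\ell=\lfloor h/2\rfloor$ gives the sharper uniform bound $Q(\mathbf m)\geq 3$ (this also follows from $Q(\mathbf m)\in\mathbb Z$ and $Q(\mathbf m)\equiv -h(\mathbf m)^2 \pmod 4$);
\item homogeneity and density to get $Q\geq 0$ on the cone;
\item the observation that a zero $\mathbf x_0$ forces $\beta(\mathbf x_0,\mathbf y)=0$ for all $\mathbf y$ supported on $S$, i.e.\ $\mathsf A_S\mathbf x_{0,S}=0$ with $\mathsf A=4\mathsf G-\mathbf d\mathbf d^{\mathsf T}$.
\end{itemize}

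The two routes diverge only at the final step. The paper uses that $\mathsf A_S$ has rational entries, so $\ker\mathsf A_S$ has a rational basis. Perturbing the coefficients of $\mathbf x_{0,S}$ in that basis yields a rational $\mathbf y\geq 0$, positive on $S$, with $Q(\mathbf y)=\mathbf y_S^{\mathsf T}\mathsf A_S\mathbf y_S=0$. This contradicts positivity on nonzero rational vectors. You instead approximate the ray through $\mathbf x_0$ by lattice points via Dirichlet's theorem. The first-order condition annihilates the order-$k$ cross term, and $Q(\mathbf m_k)=Q(\mathbf e_k)\to 0$ then contradicts the uniform gap.

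The paper's finish is pure linear algebra over $\mathbb Q$ and needs only strict positivity at rational points, but it depends on the coefficients of $Q$ being rational. Yours needs a uniform positive lower bound at nonzero lattice points plus a Diophantine input, but never uses rationality of $\mathsf A$. It would therefore apply verbatim to a real quadratic form bounded away from $0$ on $\mathbb Z_{\geq 0}^t\setminus\{0\}$. In the present setting both hypotheses hold, so either finish works.
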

\begin{proof}
We first show that $Q(\mathbf{m})>0$ for every nonzero $\mathbf{m}\in\mathbb{Z}_{\geq 0}^t$. If $Q(\mathbf{m})<0$, then $4r(\mathbf{m})<h(\mathbf{m})^2$. There exists an integer $\ell$ with $1\leq \ell\leq h(\mathbf{m})$ such that
\[
r(\mathbf{m})-1
<
\left\lfloor \frac{h(\mathbf{m})^2}{4}\right\rfloor
=
\ell\bigl(h(\mathbf{m})-\ell\bigr),
\]
and then $(\mathbf{m},\ell)$ is a strict admissible pair, a contradiction. If $Q(\mathbf{m})=0$, then $4r(\mathbf{m})=h(\mathbf{m})^2$. Since $r(\mathbf{m})$ is an integer, $h(\mathbf{m})$ is even. Taking $\ell=h(\mathbf{m})/2$, we obtain
\[
r(\mathbf{m})-1
=
\frac{h(\mathbf{m})^2}{4}-1
<
\frac{h(\mathbf{m})^2}{4}
=
\ell\bigl(h(\mathbf{m})-\ell\bigr),
\]
such that $(\mathbf{m},\ell)$ is again a strict admissible pair, a contradiction.

Let $0\neq \mathbf{q}\in\mathbb{Q}_{\geq 0}^t$. Choose a positive integer
$s$ such that $s\mathbf{q}\in\mathbb{Z}_{\geq 0}^t$. Since $s\mathbf{q}\neq0$, the previous paragraph gives $Q(s\mathbf{q})>0$. On the other hand, $Q$ is homogeneous of degree $2$, so $Q(s\mathbf{q})=s^2Q(\mathbf{q})$. Hence, $Q(\mathbf{q})>0$ for every nonzero $\mathbf{q}\in\mathbb{Q}_{\geq 0}^t$. As we mentioned in Subsection \ref{subsec::Zariski-topology}, $\mathbb{Q}^t$ is dense in $\mathbb{R}^t$. For every $\mathbf{x}\in\mathbb{R}_{\geq 0}^t$, we can choose a sequence
$\mathbf{q}_n\in\mathbb{Q}_{\geq 0}^t$ such that $\mathbf{q}_n\to\mathbf{x}$ in the usual Euclidean topology. Since $Q$ is a polynomial function, it is continuous. Hence, $Q(\mathbf{x})=\lim_{n\to\infty}Q(\mathbf{q}_n)\geq0$. 

Suppose that $Q(\mathbf{x})=0$ for some nonzero $\mathbf{x}\in\mathbb{R}_{\geq 0}^t$. Let $I:=\{i\mid x_i>0\}$ be the support of $\mathbf{x}$. Then, $x_j=0$ for every $j\notin I$. If $\mathbf{u}$ is a vector supported on $I$, then there exists $\delta>0$ such that $\mathbf{x}+s\mathbf{u}$ belongs to $\mathbb{R}_{\geq 0}^t$ for every real number $s$ with $|s|<\delta$. Since we have already shown that $Q$ is nonnegative on $\mathbb{R}_{\geq 0}^t$ and $Q(\mathbf{x})=0$, the one-variable polynomial $f(s):=Q(\mathbf{x}+s\mathbf{u})$ satisfies $f(s)\geq0$ for all real numbers $s$ with $|s|<\delta$, and $f(0)=0$. Thus, $s=0$ is a local minimum of $f$, and hence $f'(0)=0$.

Set $\mathsf{A}:=4\mathsf{G}-\mathbf{d}\mathbf{d}^{\mathsf{T}}$. Then, $\mathsf{A}$ is the symmetric matrix with rational entries such that $Q(\mathbf{z})=\mathbf{z}^{\mathsf{T}}\mathsf{A}\mathbf{z}$, for any $\mathbf{z}\in\mathbb{R}^t$. We have
\[
f(s)=Q(\mathbf{x}+s\mathbf{u})
=
Q(\mathbf{x})+2s\,\mathbf{u}^{\mathsf{T}}\mathsf{A}\mathbf{x}+s^2Q(\mathbf{u}).
\]
Therefore, $f'(0)=2\mathbf{u}^{\mathsf{T}}\mathsf{A}\mathbf{x}$. Since $f'(0)=0$, we obtain $\mathbf{u}^{\mathsf{T}}\mathsf{A}\mathbf{x}=0$. Let $\mathsf{A}_I$ be the submatrix of $\mathsf{A}$ whose rows and columns are indexed by $I$. Since both $\mathbf{x}$ and $\mathbf{u}$ are supported on $I$, $\mathbf{u}_I^{\mathsf{T}}\mathsf{A}_I\mathbf{x}_I=0$, for every $\mathbf{u}_I\in\mathbb{R}^I$. Taking $\mathbf{u}_I=\mathsf{A}_I\mathbf{x}_I$, we obtain $(\mathsf{A}_I\mathbf{x}_I)^{\mathsf{T}}(\mathsf{A}_I\mathbf{x}_I)=0$, and hence $\mathsf{A}_I\mathbf{x}_I=0$.

Since $\mathsf{A}_I$ has rational entries, the solution space of the linear system $\mathsf{A}_I\mathbf{z}=0$ has a basis consisting of rational vectors, say $\mathbf{v}_1,\mathbf{v}_2,\ldots,\mathbf{v}_m$. Since $\mathbf{x}_I\in\ker\mathsf{A}_I$, we can write $\mathbf{x}_I=a_1\mathbf{v}_1+\cdots+a_m\mathbf{v}_m$ with $a_1,\ldots,a_m\in\mathbb{R}$. Choose rational numbers $b_1,\ldots,b_m$ sufficiently close to $a_1,\ldots,a_m$, and set $\mathbf{y}_I=b_1\mathbf{v}_1+\cdots+b_m\mathbf{v}_m$. Then, $\mathbf{y}_I\in\ker\mathsf{A}_I\cap\mathbb{Q}^I$ and $\mathbf{y}_I$ is sufficiently close to $\mathbf{x}_I$. Since all entries of $\mathbf{x}_I$ are strictly positive, we may choose the $b_j$ so close to the $a_j$ that all entries of $\mathbf{y}_I$ are still strictly positive. Thus, $\mathbf{y}_I\in\mathbb{Q}_{>0}^I$. Extending $\mathbf{y}_I$ by zero outside $I$, we obtain a nonzero vector $\mathbf{y}\in\mathbb{Q}_{\geq 0}^t$. Since $\mathsf{A}_I\mathbf{y}_I=0$, we have $Q(\mathbf{y})=\mathbf{y}^{\mathsf{T}}\mathsf{A}\mathbf{y}
=
\mathbf{y}_I^{\mathsf{T}}\mathsf{A}_I\mathbf{y}_I
=0$. This contradicts the positivity of $Q$ on nonzero rational vectors. Therefore, $Q(\mathbf{x})>0$ for every nonzero $\mathbf{x}\in\mathbb{R}_{\geq 0}^t$. 
\end{proof}

The above lemma gives one direction of the relation between strictness and positivity of $Q(\mathbf{x})$. The following theorem shows that this positivity condition is in fact equivalent to the finiteness of admissible pairs.

\begin{theorem}\label{thm:strict-pairs}
Suppose that $B$ is $E$-visible finite. The following statements are equivalent.
\begin{enumerate}
\item There exists no strict admissible pair.
\item There are finitely many admissible pairs. 
\item $Q(\mathbf{x})>0$ for every nonzero vector $\mathbf{x}\in\mathbb{R}_{\geq 0}^t$. 
\item $Q(\mathbf{m})>0$ for every nonzero vector $\mathbf{m}\in\mathbb{Z}_{\geq 0}^t$. 
\end{enumerate}
\end{theorem}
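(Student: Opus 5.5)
I would prove the cycle (1)$\Rightarrow$(3)$\Rightarrow$(2)$\Rightarrow$(1), and handle (4) separately via (3)$\Rightarrow$(4)$\Rightarrow$(1).

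\textbf{(1)$\Rightarrow$(3).} This is exactly Lemma~\ref{lem::no-strict-positive-Q}.

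\textbf{(3)$\Rightarrow$(4).} This is trivial, since every nonzero $\mathbf{m}\in\mathbb{Z}_{\geq0}^t$ is a nonzero vector of $\mathbb{R}_{\geq0}^t$.

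\textbf{(4)$\Rightarrow$(1).} Argue by contraposition. Suppose $(\mathbf{m},\ell)$ is a strict admissible pair. Then
\[
r(\mathbf{m})-1<\ell\bigl(h(\mathbf{m})-\ell\bigr)\leq h(\mathbf{m})^2/4 .
\]
Since both sides of the first inequality are integers, $r(\mathbf{m})\leq\ell(h(\mathbf{m})-\ell)\leq h(\mathbf{m})^2/4$. Hence $Q(\mathbf{m})=4r(\mathbf{m})-h(\mathbf{m})^2\leq0$, contradicting (4).

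\textbf{(3)$\Rightarrow$(2).} The simplex $\Delta=\{\mathbf{x}\in\mathbb{R}_{\geq0}^t\mid \sum x_i=1\}$ is compact and $Q$ is continuous, so the Extreme Value Theorem gives a constant $c>0$ with $Q\geq c$ on $\Delta$. By homogeneity, $Q(\mathbf{x})\geq c\,|\mathbf{x}|_1^2$ on $\mathbb{R}_{\geq0}^t$. Every admissible pair satisfies $r(\mathbf{m})-1\leq h(\mathbf{m})^2/4$, that is, $Q(\mathbf{m})\leq4$. Hence $c|\mathbf{m}|_1^2\leq 4$, which bounds $|\mathbf{m}|_1$. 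There are therefore finitely many possible $\mathbf{m}$, and for each one finitely many $\ell\in[1,h(\mathbf{m})]$.

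\textbf{(2)$\Rightarrow$(1).} Argue by contraposition. Given a strict admissible pair $(\mathbf{m},\ell)$, the integer argument above gives $r(\mathbf{m})\leq\ell(h-\ell)$, where $h=h(\mathbf{m})$. Now scale. For $s\geq1$ we have $r(s\mathbf{m})=s^2r(\mathbf{m})$, $h(s\mathbf{m})=sh$, and
\[
s\ell\bigl(sh-s\ell\bigr)=s^2\ell(h-\ell)\geq s^2r(\mathbf{m})>s^2r(\mathbf{m})-1 .
\]
Thus $(s\mathbf{m},s\ell)$ is a (strict) admissible pair for every $s\geq1$, and these pairs are pairwise distinct because $\mathbf{m}\neq0$. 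This gives infinitely many admissible pairs, contradicting (2).

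I expect no serious obstacle, since the analytic content is already in Lemma~\ref{lem::no-strict-positive-Q}. The only care needed is the integer rounding: strictness gives $r\leq\ell(h-\ell)$, not just $r-1<\ell(h-\ell)$. This is what is used both in (4)$\Rightarrow$(1) and in the scaling step, where it guarantees that admissibility survives multiplying by $s^2$.
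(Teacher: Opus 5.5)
Your proof is correct and follows essentially the same route as the paper: Lemma~\ref{lem::no-strict-positive-Q} for (1)$\Rightarrow$(3), a compactness-plus-homogeneity lower bound on $Q$ combined with $Q(\mathbf{m})\leq 4$ for finiteness, and the scaling $(s\mathbf{m},s\ell)$ with integer rounding for (2)$\Rightarrow$(1). The differences are cosmetic. You normalize on $\sum x_i=1$ rather than $\mathbf{d}\cdot\mathbf{x}=1$, and you prove (4)$\Rightarrow$(1) by applying the paper's (3)$\Rightarrow$(1) computation at integer points, instead of extracting (4)$\Rightarrow$(3) from the proof of the lemma.
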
 
\begin{proof} 
We first prove that (2) implies (1). Suppose that $(\mathbf{m},\ell)$ is a strict admissible pair. We have $r(\mathbf{m})\leq\ell(h(\mathbf{m})-\ell)$ since both sides are integers. For every positive integer $z$, set $\mathbf{n}=z\mathbf{m}$ and $q=z\ell$. Then, $1\leq q\leq h(z\mathbf{m})$, $h(z\mathbf{m})=z h(\mathbf{m})$ and $r(z\mathbf{m})=z^2 r(\mathbf{m})$. Therefore,
\[
r(z\mathbf{m})-1 = z^2r(\mathbf{m})-1 < z^2\ell\bigl(h(\mathbf{m})-\ell\bigr) = q\bigl(h(z\mathbf{m})-q\bigr).
\]
It follows that $(z\mathbf{m},z\ell)$ is a strict admissible pair for every $z\geq1$, and these pairs are pairwise distinct.
Thus, there are infinitely many admissible pairs. 

We next prove that (1) implies (2). Suppose that there is no strict admissible pair. Recall $\mathbf{d}=(d_1,d_2,\ldots,d_t) \; \in \mathbb{Z}_{\geq 0}^t$. We consider $\Delta:=\left\{
\mathbf{x}\in\mathbb{R}_{\geq 0}^t
\ \middle|\
\mathbf{d}\cdot \mathbf{x}=1
\right\}$. Since $d_i>0$, every $\mathbf{x}\in\Delta$ satisfies $0\leq x_i\leq 1/d_i$. Both the condition $\mathbf{d}\cdot\mathbf{x}=1$ and $x_i\geq0$ define closed subsets of $\mathbb{R}^t$. Hence, $\Delta$ is closed and bounded in $\mathbb{R}^t$, and therefore compact by the Heine--Borel theorem.
By Lemma~\ref{lem::no-strict-positive-Q}, $Q(\mathbf{x})>0$ for any $\mathbf{x}\in\Delta$. Since $Q$ is a polynomial function, it is continuous on $\Delta$. Then, $Q$ has a positive minimum on $\Delta$ by the Extreme Value Theorem. There exists a constant $\delta>0$ such that $Q(\mathbf{x})\geq\delta$ for all $\mathbf{x}\in\Delta$.

Let $0\neq\mathbf{m}\in\mathbb{Z}_{\geq 0}^t$. We have $h(\mathbf{m})=\mathbf{d}\cdot \mathbf{m}>0$, and therefore $\mathbf{m}/h(\mathbf{m})\in\Delta$. Using the homogeneity of $Q$, we obtain
\[
Q(\mathbf{m}) = h(\mathbf{m})^2 Q\left(\frac{\mathbf{m}}{h(\mathbf{m})}\right)
\geq \delta h(\mathbf{m})^2.
\]
Let $(\mathbf{m},\ell)$ be an admissible pair. It follows that $Q(\mathbf{m}) = 4r(\mathbf{m})-h(\mathbf{m})^2 \leq 4$. Combining this with the lower bound above gives $\delta h(\mathbf{m})^2\leq 4$. Hence, $h(\mathbf{m})$ is bounded and all $m_i$ are bounded, so only finitely many vectors $\mathbf{m}$ can occur. For each fixed $\mathbf{m}$, there are only finitely many possible integers $\ell$ satisfying $1\leq\ell\leq h(\mathbf{m})$. Therefore, there are only finitely many admissible pairs.

By Lemma~\ref{lem::no-strict-positive-Q}, (1) implies (3). It remains to prove that (3) implies (1). Suppose that $Q(\mathbf{x})>0$ for every nonzero vector $\mathbf{x}\in\mathbb{R}_{\geq 0}^t$. If there is a strict admissible pair $(\mathbf{m},\ell)$, then
\[
r(\mathbf{m})
\leq
\ell\bigl(h(\mathbf{m})-\ell\bigr)
\leq
\frac{h(\mathbf{m})^2}{4}.
\]
Hence $Q(\mathbf{m}) = 4r(\mathbf{m})-h(\mathbf{m})^2 \leq 0$, which contradicts the positivity of $Q(\mathbf{x})$ on $\mathbb{R}_{\geq 0}^t\setminus\{0\}$.
Therefore, there is no strict admissible pair.

Lastly, (4) $\Rightarrow$ (3) is given in the proof of Lemma~\ref{lem::no-strict-positive-Q}, and the converse is obvious.
\end{proof}

We emphasize that Theorem~\ref{thm:strict-pairs} is purely numerical: the infinitely many strict admissible pairs obtained there may all fail to be realized by mixed bricks. 

\begin{example}
Let $B=\Bbbk[\varepsilon]/(\varepsilon^4)$ and $S$ be the simple $B$-module. Set $E=S^{\oplus 4}$ and take $\mathbf{m}=(\cdots, 0, 1,0,\cdots)$ with $N_{\mathbf{m}}=B$. Then, $r(\mathbf{m})=4$. Moreover, $\Hom_B(S,N)\cong\soc N=\varepsilon^3B$ is one-dimensional. Hence, $h(\mathbf{m})=4$ by $\Hom_B(E,N)\cong\Hom_B(S,N)^4$. Taking $\ell=2$, we obtain $r(\mathbf{m})-1=3<4=\ell\bigl(h(\mathbf{m})-\ell\bigr)$. Thus, $(\mathbf{m},2)$ is a strict admissible pair. Similar to Example \ref{ex:critical-unrealized}, $(\mathbf{m},2)$ is not realized by any mixed brick. By Theorem~\ref{thm:strict-pairs}, the admissible pairs $(z\mathbf{m},2z), z\geq1$ are all strict, but no module $M=(B^{\oplus z},W,\eta)$ with $\dim_{\Bbbk}W=2z$ can be a mixed brick. 
\end{example}

However, once a strict admissible pair is realized by a mixed brick, we expect the strict inequality to lead to many more mixed bricks. We will show this phenomenon in the next section. Here, we illustrate the phenomenon in the simplest example. 

\begin{example}
(A strict admissible pair may lead to infinitely many mixed bricks.) Let $B=\Bbbk$, $E=\Bbbk^2$, $N=\Bbbk$, and $W=\Bbbk$ as in Example~\ref{ex::same-data-different-eta}. For any one-dimensional subspace $L\in\Gr_1(\Hom_B(E,N))$, we choose a nonzero element $\varphi\in L$ and define $\eta\colon W\to\Hom_B(E,N)$ by $\eta(1)=\varphi$. Then, $\eta$ is injective and $\Im\eta=\Bbbk\varphi=L$. Since $\End_B(N)=\Bbbk\operatorname{id}_{N}$, each such $\eta$ gives a mixed brick $M=(N,W,\eta)$ by Proposition \ref{prop::mixed-brick-criterion}.
Moreover, $\Gr_1(\Hom_B(E,N))\cong\mathbb{P}^1$ has infinitely many one-dimensional subspaces, and the group $\Aut_B(N)=\Bbbk^{\times}$ acts on $\Hom_B(E,N)$ by scalar multiplication, so it fixes every one-dimensional subspace in $\Gr_1(\Hom_B(E,N))$. By Proposition~\ref{prop::iso-orbit}, the strict admissible pair $(\mathbf{m},\ell)=((1),1)$ gives infinitely many isomorphism classes of mixed bricks.
\end{example}

\subsection{A tangent-map interpretation}
We give a tangent-map interpretation of the stabilizer condition appearing in Proposition~\ref{prop::mixed-brick-criterion}. This viewpoint explains why the numerical inequality in Theorem~\ref{thm:dim-conditions} is the natural dimension condition, and it also clarifies the difference between critical and strict admissible pairs.

Fix a nonzero multiplicity vector $\mathbf{m}\in\mathbb{Z}_{\geq 0}^t$ and an integer $1\leq \ell\leq h(\mathbf{m})$. Let $L\in\Gr_\ell(\Hom_B(E,N_{\mathbf{m}}))$. For every $f\in\End_B(N_{\mathbf{m}})$, post-composition induces a linear endomorphism $f_*$ of $\Hom_B(E,N_{\mathbf{m}})$, as we mentioned in Definition \ref{def::E-faithful-module}. We define a $\Bbbk$-linear map
\[
\begin{aligned}
\partial_L\colon
\End_B(N_{\mathbf{m}})
&\longrightarrow
\Hom_{\Bbbk}\bigl(L,\Hom_B(E,N_{\mathbf{m}})/L\bigr)\\
f & \longmapsto\partial_L(f)(\varphi):=f_*(\varphi)+L \quad (\forall \; \varphi\in L).
\end{aligned}
\]
By definition, an endomorphism $f\in\End_B(N_{\mathbf{m}})$ belongs to $\ker\partial_L$ if and only if $f_*(L)\subseteq L$. Hence, $\ker\partial_L = \Stab_{\End_B(N_{\mathbf{m}})}(L)$. In particular, Proposition~\ref{prop::mixed-brick-criterion} says that the triple $(N_{\mathbf{m}},L,\iota)$, where $\iota\colon L\hookrightarrow\Hom_B(E,N_{\mathbf{m}})$ is the natural inclusion, is a mixed brick if and only if $\ker\partial_L=\Bbbk\operatorname{id}_{N_{\mathbf{m}}}$. 

Since every scalar multiple of $\operatorname{id}_{N_{\mathbf{m}}}$ preserves $L$, the map $\partial_L$ induces a linear map
\[
\overline{\partial}_L\colon
\End_B(N_{\mathbf{m}})/\Bbbk\operatorname{id}_{N_{\mathbf{m}}}
\longrightarrow
\Hom_{\Bbbk}\bigl(L,\Hom_B(E,N_{\mathbf{m}})/L\bigr)
\]
defined by $\overline{\partial}_L \left( f+\Bbbk\operatorname{id}_{N_{\mathbf{m}}} \right) (\varphi) = f_*(\varphi)+L$ for any $\varphi\in L$. It follows that $L\in\mathcal{L}(\mathbf{m},\ell)$ if and only if $\overline{\partial}_L$ is injective, or equivalently, $\ker\partial_L=\Bbbk\operatorname{id}_{N_{\mathbf{m}}}$. On the other hand, $\dim_{\Bbbk}\End_B(N_{\mathbf{m}})/\Bbbk\operatorname{id}_{N_{\mathbf{m}}}
= r(\mathbf{m})-1$ and $\dim_{\Bbbk}\Hom_{\Bbbk}\bigl(L,\Hom_B(E,N_{\mathbf{m}})/L\bigr)
=
\ell\bigl(h(\mathbf{m})-\ell\bigr)$ imply that the injectivity of $\overline{\partial}_L$ can occur only if $r(\mathbf{m})-1\leq \ell\bigl(h(\mathbf{m})-\ell\bigr)$. Thus, admissibility is precisely the necessary dimension condition for
$\overline{\partial}_L$ to be injective.

If $(\mathbf{m},\ell)$ is critical and $L\in\mathcal{L}(\mathbf{m},\ell)$, then $\overline{\partial}_L$ is an isomorphism, so the orbit of $L$ has full dimension in the Grassmannian. This is the reason behind Theorem~\ref{thm::critical-pair-one-orbit}. If $(\mathbf{m},\ell)$ is strict and $L\in\mathcal{L}(\mathbf{m},\ell)$, then $\overline{\partial}_L$ is injective but not surjective, and the orbit of $L$ has strictly smaller dimension than $\mathcal{L}(\mathbf{m},\ell)$. This is precisely the dimension gap used in Proposition~\ref{prop:strict-infinite-bricks}, in the next section.

\section{Brick-finiteness}\label{Section-4}
We now return from the numerical problem to the representation-theoretic problem. 
Theorem~\ref{thm:strict-pairs} identifies strict admissible pairs as the numerical source of infiniteness. To obtain actual mixed bricks, one must also realize the corresponding subspaces with a scalar stabilizer. Theorem~\ref{thm:strict-pairs} also says that the absence of strict admissible pairs is equivalent to the finiteness of all admissible pairs. In this situation, every admissible pair is critical, and the uniqueness theorem for realized critical pairs gives finiteness of mixed bricks.

\begin{proposition}\label{prop::mixed-bricks-finite}
Suppose that $B$ is $E$-visible finite and $\Lambda=B[E]$. If there is no strict admissible pair, then there are only finitely many isomorphism classes of mixed bricks in $\mod \Lambda$. If moreover, $B$ is brick-finite, then $\Lambda$ is brick-finite.
\end{proposition}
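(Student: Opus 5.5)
The plan is to combine Theorem~\ref{thm:strict-pairs}, Theorem~\ref{thm:dim-conditions}, Proposition~\ref{prop::mixed-bricks-orbits} and Theorem~\ref{thm::critical-pair-one-orbit}. First, every mixed brick $M=(N,W,\eta)$ has $N\cong N_{\mathbf{m}}$ for a nonzero $\mathbf{m}\in\mathbb{Z}_{\geq0}^t$; this follows from Corollary~\ref{cor::mixed-brick-summand} together with the Krull--Schmidt theorem, since $B$ is $E$-visible finite. By Lemma~\ref{lem:eta-injective}, $\eta$ is injective, so $\ell:=\dim_{\Bbbk}W$ satisfies $1\leq\ell\leq h(\mathbf{m})$. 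Theorem~\ref{thm:dim-conditions} then shows that $(\mathbf{m},\ell)$ is an admissible pair. In particular, assigning to each mixed brick its pair $(\mathbf{m},\ell)$ gives a well-defined map from isomorphism classes of mixed brick to realized admissible pairs. It is well defined because $\mathbf{m}$ is determined by $N$ up to isomorphism, again by Krull--Schmidt.

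Next, since there is no strict admissible pair, the implication (1)$\Rightarrow$(2) of Theorem~\ref{thm:strict-pairs} shows that there are only finitely many admissible pairs, and every one of them is critical. Fix such a pair $(\mathbf{m},\ell)$. By Proposition~\ref{prop::mixed-bricks-orbits}, the isomorphism classes of mixed bricks in its fiber are in bijection with the $\Aut_B(N_{\mathbf{m}})$-orbits in $\mathcal{L}(\mathbf{m},\ell)$. Theorem~\ref{thm::critical-pair-one-orbit} says there is at most one such orbit. So each fiber has at most one element, and the number of isomorphism classes of mixed bricks is at most the number of admissible pairs, which is finite.

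For the second assertion, we use the trichotomy recorded in the introduction and in the discussion before Lemma~\ref{lem::hom-non-zero}. Every brick $M=(N,W,\eta)$ falls into one of three cases. If $\eta=0$ and $W=0$, then $M=(N,0,0)$, and $N$ is a brick in $\mod B$. If $\eta=0$ and $N=0$, then $M\cong(0,\Bbbk,0)$. Indeed, $\End_\Lambda(0,W,0)=\End_{\Bbbk}(W)$, which forces $\dim W=1$. The case $\eta=0$ with both $N$ and $W$ nonzero cannot occur, because then $M$ is decomposable and so not a brick. Finally, if $\eta\neq0$, then $M$ is a mixed brick.

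In the first case, $\End_\Lambda(N,0,0)\cong\End_B(N)$, so $N$ is a brick of $B$. Brick-finiteness of $B$ therefore gives finitely many bricks of this kind. There is exactly one brick of the second kind, and finitely many of the third by the first part. Hence $\Lambda$ is brick-finite.

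None of these steps should pose a real obstacle, since all the substantive work is in the cited results. The only point needing care is checking that the brick-to-pair map is well defined and lands in admissible pairs with $1\leq\ell\leq h(\mathbf{m})$. This relies on the injectivity of $\eta$, which keeps $\ell$ in the valid range.
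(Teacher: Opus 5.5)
Your proposal is correct and follows the paper's proof essentially step by step. It uses Theorem~\ref{thm:strict-pairs} to get finitely many admissible pairs, all critical, and Theorem~\ref{thm::critical-pair-one-orbit} (with Proposition~\ref{prop::mixed-bricks-orbits}) to get at most one isomorphism class per pair. It then sorts bricks into old bricks $(N,0,0)$, the simple $(0,\Bbbk,0)$, and mixed bricks. Your explicit check that each mixed brick yields an admissible pair with $1\leq\ell\leq h(\mathbf{m})$, via Lemma~\ref{lem:eta-injective} and Theorem~\ref{thm:dim-conditions}, spells out what the paper leaves implicit.
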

\begin{proof}
If there is no admissible pair, then there is no mixed brick in $\mod \Lambda$ by Theorem~\ref{thm:dim-conditions}. If there exist admissible pairs, by Theorem~\ref{thm:strict-pairs}, the absence of strict admissible
pairs implies that there are only finitely many admissible pairs,
and all of them are critical. By
Theorem~\ref{thm::critical-pair-one-orbit}, each critical admissible
pair is realized by at most one isomorphism class of mixed bricks.
It follows that there are only finitely many isomorphism classes of
mixed bricks in $\mod\Lambda$.

Let $M=(N,W,\eta)$ be a brick in $\mod \Lambda$. If $M$ is mixed, then $M$ belongs to one of the finitely many isomorphism classes considered above. If $W=0$, then $M=(N,0,0)$ is a brick in $\mod \Lambda$ if and only if $N$ is a brick in $\mod B$. Since $B$ is brick-finite, there are only finitely many such isomorphism classes. If $N=0$, then $M=(0,W,0)$ is a brick if and only if $\dim_{\Bbbk} W=1$; this case contributes only the single isomorphism class $(0,\Bbbk,0)$. Therefore, every brick in $\mod \Lambda$ is either a mixed brick, a brick of the form $(N,0,0)$ coming from $\mod B$, or the simple module $(0,\Bbbk,0)$ at the extension vertex. Consequently, $\Lambda$ is brick-finite.
\end{proof}

We explain how Proposition~\ref{prop::mixed-bricks-finite} can be used in practice. For a fixed nonzero vector $\mathbf{m}\in\mathbb{Z}_{\geq 0}^t$, the maximum of $\ell(h(\mathbf{m})-\ell)$ over all integers $1\leq\ell\leq h(\mathbf{m})$ is $\left\lfloor h(\mathbf{m})^2/4\right\rfloor$. Hence, there exists an admissible pair with first component $\mathbf{m}$ if and only if $r(\mathbf{m})-1\leq \left\lfloor h(\mathbf{m})^2/4\right\rfloor$. It follows that the finiteness of admissible pairs is equivalent to the finiteness of the set 
\[
\mathcal{S}:=\{\mathbf{m}\in\mathbb{Z}_{\geq 0}^t\setminus\{0\}\mid r(\mathbf{m})-1\leq \left\lfloor h(\mathbf{m})^2/4\right\rfloor\}.
\]
This formulation is often more convenient for computations.

\begin{remark}
Recall $\Delta=\{\mathbf{x}\in\mathbb{R}_{\geq 0}^t\mid h(\mathbf{x})=1\}$ in the proof of Theorem~\ref{thm:strict-pairs}. Since $Q(\mathbf{x})=4r(\mathbf{x})-1$ for $\mathbf{x}\in\Delta$, $\mathcal{S}$ is finite if and only if $\min_{\mathbf{x}\in\Delta} r(\mathbf{x})>1/4$. In practice, this reduces the problem to a finite-dimensional quadratic minimization problem on the simplex $\Delta$.
One may compute the minimum by checking the faces of $\Delta$, or equivalently by fixing the support $I\subseteq\{1,\ldots,t\}$ and minimizing $\mathbf{x}_I^{\mathsf{T}}\mathsf{G}_I\mathbf{x}_I$ under the constraints $\mathbf{x}_I\geq0$ and $\mathbf{d}_I\cdot\mathbf{x}_I=1$.
A quicker sufficient test is to prove a lower bound $r(\mathbf{x})\geq c\,h(\mathbf{x})^2$ on $\mathbb{R}_{\geq 0}^t$ for some $c>1/4$; this will be used in Subsection \ref{subsec-example}. Conversely, to prove that $\mathcal{S}$ is infinite, it is enough to find a nonzero vector $\mathbf{x}\in\mathbb{R}_{\geq 0}^t$ with $Q(\mathbf{x})\leq0$, or more concretely a nonzero integer vector $\mathbf{m}\in\mathbb{Z}_{\geq 0}^t$ with $4r(\mathbf{m})\leq h(\mathbf{m})^2$.
\end{remark}
 
The next proposition shows that once a strict admissible pair is realized by a mixed brick, the strict dimension gap produces infinitely many isomorphism classes of mixed bricks.

\begin{proposition}\label{prop:strict-infinite-bricks}
Let $(\mathbf{m},\ell)$ be a strict admissible pair. If $(\mathbf{m},\ell)$ is realized by a mixed brick, then there are infinitely many isomorphism classes of mixed bricks in $\mod \Lambda$. In particular, $\Lambda$ is brick-infinite.
\end{proposition}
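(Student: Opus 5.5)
The plan is to combine the openness of $\mathcal{L}(\mathbf{m},\ell)$ (Lemma~\ref{lem::Lml-open}) with the orbit dimension count from the proof of Theorem~\ref{thm:dim-conditions}, and then pass to infinitely many orbits by a dimension argument. Since $(\mathbf{m},\ell)$ is realized, Proposition~\ref{prop::mixed-bricks-orbits} shows that $\mathcal{L}(\mathbf{m},\ell)\neq\varnothing$. By Lemma~\ref{lem::Lml-open}, it is then a nonempty Zariski-open subset of the irreducible Grassmannian $\Gr_\ell(\Hom_B(E,N_{\mathbf{m}}))$. Hence it is irreducible, and by Propositions~\ref{prop:open-subset-dim} and~\ref{prop::grassmannian-dimension} it has dimension $\ell(h(\mathbf{m})-\ell)$.

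For every $L\in\mathcal{L}(\mathbf{m},\ell)$, the stabilizer of $L$ in $\Aut_B(N_{\mathbf{m}})$ is $\Bbbk^\times\operatorname{id}$. As in the proof of Theorem~\ref{thm:dim-conditions}, the orbit $\Aut_B(N_{\mathbf{m}})\cdot L$ is therefore locally closed of dimension $r(\mathbf{m})-1$. By strictness, this is strictly less than $\dim\mathcal{L}(\mathbf{m},\ell)$.

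Suppose, for a contradiction, that $\mathcal{L}(\mathbf{m},\ell)$ were a union of finitely many orbits $\mathcal{O}_1,\ldots,\mathcal{O}_s$. Then $\mathcal{L}(\mathbf{m},\ell)=\overline{\mathcal{O}_1}\cap\mathcal{L}\cup\cdots\cup\overline{\mathcal{O}_s}\cap\mathcal{L}$, with closures taken in $\mathcal{L}(\mathbf{m},\ell)$. Irreducibility forces $\mathcal{L}(\mathbf{m},\ell)=\overline{\mathcal{O}_i}\cap\mathcal{L}(\mathbf{m},\ell)$ for some $i$. Since an orbit is open in its closure, this gives $\dim \mathcal{L}(\mathbf{m},\ell)=\dim\overline{\mathcal{O}_i}=\dim\mathcal{O}_i=r(\mathbf{m})-1$, contradicting the strict inequality. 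Thus there are infinitely many $\Aut_B(N_{\mathbf{m}})$-orbits in $\mathcal{L}(\mathbf{m},\ell)$, and Proposition~\ref{prop::mixed-bricks-orbits} turns them into infinitely many pairwise nonisomorphic mixed bricks. These bricks are bricks of $\Lambda$, so $\Lambda$ is brick-infinite.

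The main obstacle is the dimension bookkeeping: we need that the closure of an orbit has the same dimension as the orbit (true because the orbit is a nonempty open subset of its irreducible closure) and that dimension is monotone under closed inclusions. Both are standard facts about orbits of the connected group $\Aut_B(N_{\mathbf{m}})$, which is an open subset of an affine space. If needed, I would cite \cite{Humphreys-book-1975} for the fact that orbit closures are irreducible and that orbits are open in their closures.
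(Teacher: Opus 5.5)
Your proof is correct and follows essentially the same route as the paper's. Both arguments use that $\mathcal{L}(\mathbf{m},\ell)$ is nonempty open of dimension $\ell(h(\mathbf{m})-\ell)$, that every orbit is locally closed of dimension $r(\mathbf{m})-1$, that finitely many orbit closures cannot cover it, and then Proposition~\ref{prop::mixed-bricks-orbits}. The only cosmetic difference is that you use irreducibility of $\mathcal{L}(\mathbf{m},\ell)$ to single out one orbit closure, whereas the paper uses that a finite union of closed sets of dimension $r(\mathbf{m})-1$ still has dimension $r(\mathbf{m})-1$.
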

\begin{proof}
Since $(\mathbf{m},\ell)$ is realized by a mixed brick, Proposition~\ref{prop::mixed-bricks-orbits} implies that $\mathcal{L}(\mathbf{m},\ell)$ is nonempty. It is shown in the proof of Theorem \ref{thm::critical-pair-one-orbit} that $\dim \mathcal{L}(\mathbf{m},\ell)=\ell\bigl(h(\mathbf{m})-\ell\bigr)$. We have also shown in the proof of Theorem \ref{thm:dim-conditions} that every $\Aut_B(N_{\mathbf{m}})$-orbit in $\mathcal{L}(\mathbf{m},\ell)$ has dimension $r(\mathbf{m})-1$. Since $(\mathbf{m},\ell)$ is strict, we have $r(\mathbf{m})-1<\ell\bigl(h(\mathbf{m})-\ell\bigr)=\dim\mathcal{L}(\mathbf{m},\ell)$.

Suppose, for a contradiction, that $\mathcal{L}(\mathbf{m},\ell)$ contains only finitely many $\Aut_B(N_{\mathbf{m}})$-orbits. Since these orbits are locally closed (see the proof of Theorem \ref{thm:dim-conditions}), the closure of each orbit in $\mathcal{L}(\mathbf{m},\ell)$ has dimension $r(\mathbf{m})-1$. Hence, the finite union of these orbit closures has dimension $r(\mathbf{m})-1$. But this finite union covers $\mathcal{L}(\mathbf{m},\ell)$, contradicting $\dim\mathcal{L}(\mathbf{m},\ell)>r(\mathbf{m})-1$. Thus, $\mathcal{L}(\mathbf{m},\ell)$ contains infinitely many $\Aut_B(N_{\mathbf{m}})$-orbits.

Lastly, Proposition~\ref{prop::mixed-bricks-orbits} identifies the $\Aut_B(N_{\mathbf{m}})$-orbits in $\mathcal{L}(\mathbf{m},\ell)$ with the isomorphism classes of mixed bricks $M=(N',W,\eta)$ satisfying $N'\cong N_{\mathbf{m}}$ and $\dim_{\Bbbk} W=\ell$. It follows that there are infinitely many isomorphism classes of mixed bricks in $\mod \Lambda$.
\end{proof}

We point out that the existence of a strict admissible pair is not a sufficient condition for the existence of a realized strict admissible pair. There are various reasons why a strict admissible pair $(\mathbf{m},\ell)$ fails to be realized by a mixed brick. We give an example using the reason that $N_{\mathbf{m}}$ is not $E$-faithful. 

\subsection{Numerical strictness without geometric realization}\label{subsec-example}
In this subsection, we set $B=\Bbbk Q_n$ with 
\[
Q_n\colon \xymatrix@C=1cm{1\ar[r]&2\ar[r]&\cdots \ar[r]&n}, \quad n\geq 2.
\]
For $1\leq a\leq b\leq n$, let $M[a,b]$ be the interval module which is one-dimensional at the vertices $a,a+1,\ldots,b$, with identity maps along the arrows inside the interval and zero elsewhere. The indecomposable $B$-modules are precisely these interval modules. With our convention for right modules, the following interval Hom formula holds:
\[
\dim_{\Bbbk}\Hom_B(M[a,b],M[c,d])=
\begin{cases}
1, & c\leq a\leq d\leq b,\\
0, & \text{otherwise.}
\end{cases}
\]
It follows that for a single interval $E_0=M[a,b]$, the $E_0$-visible indecomposable modules are precisely the intervals $M[c,d]$ with $c\leq a\leq d\leq b$. Equivalently, they form the rectangle
$\{M[c,d]\mid 1\leq c\leq a,\ a\leq d\leq b\}$.

We consider $n=6$ as an example. Take $E=M[1,1]\oplus M[2,2]\oplus M[3,6]$ and set 
\[ 
N= M[1,1]\oplus M[1,6]\oplus M[2,2]\oplus M[2,5]\oplus M[3,3]\oplus M[3,4]. 
\] 
One may easily verify that $M[1,1]$ is visible from $M[1,1]$, $M[2,2]$ is visible from $M[2,2]$, and the remaining four summands are visible from $M[3,6]$. By the interval Hom formula, the only nonzero Hom-spaces between two distinct indecomposable summands of $N$ are 
\[ 
\Hom_B(M[1,6],M[1,1]),\qquad \Hom_B(M[2,5],M[2,2]),\qquad \Hom_B(M[3,4],M[3,3]). 
\] 
Hence, $\dim_{\Bbbk}\End_B(N)=9$. On the other hand, each indecomposable summand of $N$ contributes exactly one dimension to $\Hom_B(E,N)$, and therefore $\dim_{\Bbbk}\Hom_B(E,N)=6$. Taking $\ell=3$, we obtain 
\[ 
\dim_{\Bbbk}\End_B(N)-1=8<9=\ell\bigl(\dim_{\Bbbk}\Hom_B(E,N)-\ell\bigr). 
\] 
It turns out that the pair $(\mathbf{m},3)$ with $N_{\mathbf{m}}\cong N$ is a strict admissible pair. 

We first claim that this strict admissible pair $(\mathbf{m},3)$ is not realized by a mixed brick. Let $0\neq f\colon M[1,6]\to M[1,1]$ be the unique nonzero morphism, and regard $f$ as an endomorphism of $N$ whose only nonzero matrix component is this map. Then, $f$ is nonzero and non-scalar in $\End_B(N)$. We find that $f$ acts as zero on $\Hom_B(E,N)$ by post-composition. Indeed, the only possible nonzero contribution would come from composing a morphism $M[3,6]\to M[1,6]$ with $f$, but the resulting morphism lies in $\Hom_B(M[3,6],M[1,1])=0$; the summands $M[1,1]$ and $M[2,2]$ of $E$ do not map nontrivially to $M[1,6]$. Applying Lemma \ref{lem:zero-action}, the strict admissible pair $(\mathbf{m},3)$ is not realized by a mixed brick. 

It remains to explain why no strict admissible pair is realized in this example. Suppose that $M'=(N',W',\eta')$ is a mixed brick in this example. By Corollary~\ref{cor::mixed-brick-summand}, every indecomposable direct summand of $N'$ is $E$-visible. 
By Lemma~\ref{lem:zero-action}, the direct sum $N'$ must have the following property: no nonzero endomorphism in $\End_B(N')$ acts as zero on $\Hom_B(E,N')$ by post-composition. 
We now ignore multiplicities for the moment and list the maximal collections of $E$-visible interval modules whose direct sums have this property. This is harmless because the following construction only depends on the support of $N'$. Once the possible supports are restricted, arbitrary multiplicities are encoded by nonnegative real coordinates in the corresponding cone.

The $E$-visible indecomposable modules are 
\[ 
M[1,1],\quad M[1,2],\quad M[2,2], \quad\text{and}\quad M[p,q]\text{ with }p=1,2,3,\ 3\leq q\leq6. 
\] 
Set $\mathcal{R}_p:=\{M[p,q]\mid 3\leq q\leq6\}\; (p=1,2,3)$. One finds that the module $M[1,1]$ cannot occur together with $M[1,2]$ or with any module in $\mathcal{R}_1$; the module $M[1,2]$ cannot occur together with any module in $\mathcal{R}_1\cup\mathcal{R}_2$; the module $M[2,2]$ cannot occur together with any module in $\mathcal{R}_2$. In each of these cases, the interval Hom formula gives a nonzero morphism between two summands of $N'$ which acts as zero on $\Hom_B(E,N')$. Consequently, the support of every $E$-faithful module must avoid the forbidden pairs listed above. It follows that its support is contained in one of the following five maximal collections.
\[
\mathcal{R}_1\cup\mathcal{R}_2\cup\mathcal{R}_3, \quad \{M[1,1]\}\cup\mathcal{R}_2\cup\mathcal{R}_3,\quad \{M[1,1],M[2,2]\}\cup\mathcal{R}_3,
\]
\[
\{M[2,2]\}\cup\mathcal{R}_1\cup\mathcal{R}_3,\quad \{M[1,2],M[2,2]\}\cup\mathcal{R}_3.
\]

We now show that none of these five maximal collections can support a strict admissible pair. 
Recall that $Q(\mathbf{x})=4r(\mathbf{x})-h(\mathbf{x})^2$ for any $\mathbf{x}\in \mathbb{R}_{\geq 0}^t$. 
For each such collection $\mathcal{R}$, we restrict $Q$ to the coordinates corresponding to the indecomposable modules in $\mathcal{R}$; equivalently, we set all coordinates outside $\mathcal{R}$ equal to zero. 
We denote the resulting quadratic form by $Q_{\mathcal{R}}$. 
By Theorem~\ref{thm:strict-pairs}, it is enough to show that $Q_{\mathcal{R}}(\mathbf{x})>0$, for every nonzero $\mathbf{x}\in\mathbb{R}_{\geq 0}^{\mathcal{R}}$. Here, the vector $\mathbf{x}$ is only a real weight vector on the chosen collection, not necessarily a multiplicity vector. 
This causes no problem, since proving positivity on the whole real cone is stronger than proving it only on integer multiplicity vectors.

\begin{itemize}
\item Set $\mathcal{R}=\mathcal{R}_1\cup\mathcal{R}_2\cup\mathcal{R}_3$. 
It forms a $3\times4$ rectangle in the interval coordinates, namely it consists of the modules $M[p,q]$ with $p=1,2,3$ and $q=3,4,5,6$. 
Let $x_{p,q}$ be the coordinate corresponding to $M[p,q]$, and put $x_p=x_{p,3}+x_{p,4}+x_{p,5}+x_{p,6}$, $x_q=x_{1,q}+x_{2,q}+x_{3,q}$. Although $r(\mathbf{x})$ was defined using the symmetrized matrix $\mathsf{G}$, we may compute it using the directed Hom matrix $\mathsf{C}$, since $\mathbf{x}^{\mathsf{T}}\mathsf{G}\mathbf{x}=\mathbf{x}^{\mathsf{T}}\mathsf{C}\mathbf{x}$. By the interval Hom formula, for modules in this rectangle one has
\[
\Hom_B(M[p,q],M[p',q'])\neq0
\quad\Longleftrightarrow\quad
p'\leq p,\ q'\leq q.
\]
Here, the middle condition $p\leq q'$ is automatic because $p\leq3\leq q'$. Then, $r(\mathbf{x})$ contains all same-row and same-column contributions. After subtracting the diagonal terms once, we have 
\[
r(\mathbf{x}) =
\sum_{p=1}^3\sum_{q=3}^6
\sum_{p'=1}^{p}\sum_{q'=3}^{q}
x_{p,q}x_{p',q'}
\geq
\frac12\sum_{p=1}^3 x_p^2
+
\frac12\sum_{q=3}^6 x_q^2.
\]
Since each module $M[p,q]$ in this rectangle contributes exactly one dimension to $\Hom_B(E,M[p,q])$, we have $h(\mathbf{x})=\sum_p x_p=\sum_q x_q$. Using Cauchy's inequality, we get
\[
r(\mathbf{x})
\geq
\frac12\left(\frac13+\frac14\right)h(\mathbf{x})^2
=
\frac7{24}h(\mathbf{x})^2
>
\frac14h(\mathbf{x})^2.
\]
It follows that $Q_{\mathcal{R}}(\mathbf{x})>0$ for every nonzero $\mathbf{x}\in\mathbb{R}_{\geq 0}^{\mathcal{R}}$.

\item Set $\mathcal{R}=\{M[1,1]\}\cup\mathcal{R}_2\cup\mathcal{R}_3$. By the interval Hom formula, there are no nonzero Hom-spaces in either direction between $M[1,1]$ and any module in $\mathcal{R}_2\cup\mathcal{R}_3$. Hence, $r(\mathbf{x})$ splits as the sum of the two parts. Let $h_1(\mathbf{x})$ be the contribution to $h(\mathbf{x})$ from $M[1,1]$, and let $h_2(\mathbf{x})$ be the contribution to $h(\mathbf{x})$ from $\mathcal{R}_2\cup\mathcal{R}_3$. The isolated module $M[1,1]$ gives $r_1(\mathbf{x})=h_1(\mathbf{x})^2$. The rectangle $\mathcal{R}_2\cup\mathcal{R}_3$ is a $2\times4$ rectangle, so the same strategy as in the previous case gives
\[
r_2(\mathbf{x})\geq
\frac12\left(\frac12+\frac14\right)h_2(\mathbf{x})^2
=
\frac38h_2(\mathbf{x})^2.
\]
Therefore, for any nonzero $\mathbf{x}\in\mathbb{R}_{\geq 0}^{\mathcal{R}}$, $h(\mathbf{x})=h_1(\mathbf{x})+h_2(\mathbf{x})>0$ gives 
\[
\begin{aligned}
Q_{\mathcal{R}}(\mathbf{x})
&\geq
4h_1(\mathbf{x})^2+\frac32h_2(\mathbf{x})^2-(h_1(\mathbf{x})+h_2(\mathbf{x}))^2\\
&=
\frac12\left((h_2(\mathbf{x})-2h_1(\mathbf{x}))^2+2h_1(\mathbf{x})^2\right)>0.
\end{aligned}
\]

\item Similar arguments as above apply to $\{M[2,2]\}\cup\mathcal{R}_1\cup\mathcal{R}_3$ and $\{M[1,2],M[2,2]\}\cup\mathcal{R}_3$; we leave the details to readers. It remains to consider $\mathcal{R}=\{M[1,1],M[2,2]\}\cup\mathcal{R}_3$. Let $x$ and $y$ be the coordinates corresponding to $M[1,1]$ and $M[2,2]$, respectively. 
Let $z_1,z_2,z_3,z_4$ be the coordinates corresponding to the four modules in $\mathcal{R}_3$. Put $u=x+y$ and $v=z_1+z_2+z_3+z_4$. Since the two isolated modules $M[1,1]$ and $M[2,2]$ have no nonzero Hom-spaces between them, their contribution to $r(\mathbf{x})$ is
\[
x^2+y^2\geq\frac12(x+y)^2=\frac12u^2.
\]
On the other hand, $\mathcal{R}_3$ is a $1\times4$ rectangle, so the rectangle estimate gives
\[
r_{\mathcal{R}_3}(\mathbf{x})
\geq
\frac12\left(1+\frac14\right)v^2
=
\frac58v^2.
\]
Since each module in this collection contributes exactly one dimension to the Hom-space from $E$, we have $h(\mathbf{x})=u+v>0$. Hence, for every nonzero $\mathbf{x}\in\mathbb{R}_{\geq 0}^{\mathcal{R}}$,
\[
\begin{aligned}
Q_{\mathcal{R}}(\mathbf{x})
&\geq
4\left(\frac12u^2+\frac58v^2\right)-(u+v)^2=(u-v)^2+\frac12v^2
>0.
\end{aligned}
\]
\end{itemize}

We conclude that, in this example, we have found a strict admissible pair, but no strict admissible pair is realized by a mixed brick. Moreover, we claim that the algebra $\Lambda=B[E]$ is brick-finite. If $M'=(N',W',\eta')$ is a mixed brick in $\mod \Lambda$, then $N'$ is $E$-faithful, and hence its support is contained in one of the five collections considered above. By Theorem~\ref{thm:strict-pairs}, only finitely many admissible pairs can be realized by mixed bricks, and all of them are critical. By Theorem~\ref{thm::critical-pair-one-orbit}, there are only finitely many isomorphism classes of mixed bricks in $\mod \Lambda$. Since $B=\Bbbk Q_6$ is representation-finite, it is brick-finite, and consequently, $\Lambda$ is brick-finite.

\subsection{Necessary and sufficient conditions for brick-finiteness}
We now summarise the preceding results as a necessary and sufficient criterion for the finiteness of mixed bricks. 

\begin{theorem}\label{thm:mixed-finite}
Suppose that $B$ is $E$-visible finite. The following conditions are equivalent.
\begin{enumerate}
\item There are only finitely many isomorphism classes of mixed bricks in $\mod \Lambda$.

\item There are only finitely many realized admissible pairs, and each of them is critical.

\item There are only finitely many pairs $(\mathbf{m},\ell)$ with $\mathbf{m}\neq0$, $1\leq\ell\leq h(\mathbf{m})$, and $\mathcal{L}(\mathbf{m},\ell)\neq\varnothing$, and every such pair is critical.
\end{enumerate}
If one of these conditions holds, then $\Lambda$ is brick-finite if and only if $B$ is brick-finite. 
\end{theorem}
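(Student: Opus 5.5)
The proof runs through the equivalences $(2)\Leftrightarrow(3)$, $(2)\Rightarrow(1)$ and $(1)\Rightarrow(2)$, and then deduces the final assertion.

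First, $(2)\Leftrightarrow(3)$ is a matter of unwinding definitions. By Proposition~\ref{prop::mixed-bricks-orbits}, a pair $(\mathbf{m},\ell)$ with $\mathbf{m}\neq0$ and $1\leq\ell\leq h(\mathbf{m})$ is realized by a mixed brick if and only if $\mathcal{L}(\mathbf{m},\ell)\neq\varnothing$. Theorem~\ref{thm:dim-conditions} shows that any such pair is automatically admissible. Hence the set of realized admissible pairs coincides with the set of pairs in (3), and the two conditions say the same thing about this set.

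For $(2)\Rightarrow(1)$, recall from Corollary~\ref{cor::mixed-brick-summand} that every mixed brick $(N,W,\eta)$ has $N\cong N_{\mathbf{m}}$ for some nonzero $\mathbf{m}$. By Theorem~\ref{thm:dim-conditions}, $(\mathbf{m},\dim_{\Bbbk}W)$ is then a realized admissible pair. So the isomorphism classes of mixed bricks are partitioned according to the finitely many realized pairs. Each of these pairs is critical by hypothesis, so by Theorem~\ref{thm::critical-pair-one-orbit} it accounts for exactly one isomorphism class. Therefore there are only finitely many mixed bricks up to isomorphism.

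For $(1)\Rightarrow(2)$, argue by contraposition. If some realized admissible pair is strict, Proposition~\ref{prop:strict-infinite-bricks} gives infinitely many isomorphism classes of mixed bricks. If instead there are infinitely many realized pairs, pick a mixed brick for each. Bricks realizing distinct pairs $(\mathbf{m},\ell)$ are non-isomorphic, because the pair is an isomorphism invariant: $\ell=\dim_{\Bbbk}W$, and $\mathbf{m}$ is determined by $N$ through Krull--Schmidt. Again this yields infinitely many mixed bricks.

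The final assertion uses the trichotomy from the proof of Proposition~\ref{prop::mixed-bricks-finite}. Every brick of $\Lambda$ is either a mixed brick, or of the form $(N,0,0)$ with $N$ a brick of $B$, or the simple module $(0,\Bbbk,0)$. Moreover, $(N,0,0)\cong(N',0,0)$ if and only if $N\cong N'$. Under (1), the mixed bricks and $(0,\Bbbk,0)$ contribute only finitely many classes. Hence $\Lambda$ is brick-finite exactly when the bricks of $B$ are finite in number, that is, when $B$ is brick-finite.

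No step is a real obstacle, since all the geometric content sits in the cited results. The one point needing care is the invariance of $(\mathbf{m},\ell)$ under isomorphism in $(1)\Rightarrow(2)$. This holds because an isomorphism $(f,g)$ of $\Lambda$-modules has both $f$ and $g$ bijective.
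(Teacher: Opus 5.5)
Your proposal is correct and follows the same route as the paper. You get $(2)\Leftrightarrow(3)$ by unwinding definitions via Proposition~\ref{prop::mixed-bricks-orbits}, $(1)\Rightarrow(2)$ from Proposition~\ref{prop:strict-infinite-bricks}, $(2)\Rightarrow(1)$ from Theorem~\ref{thm::critical-pair-one-orbit}, and the final assertion from the split of bricks into mixed bricks, $(N,0,0)$ with $N$ a brick of $B$, and $(0,\Bbbk,0)$. You are slightly more complete than the paper: you explicitly handle the case of infinitely many realized pairs in $(1)\Rightarrow(2)$, using that $(\mathbf{m},\ell)$ is an isomorphism invariant, which the paper leaves implicit.
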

\begin{proof}
By Theorem~\ref{thm:dim-conditions}, every mixed brick gives an admissible pair $(\mathbf{m},\ell)$. By Proposition~\ref{prop::mixed-bricks-orbits}, mixed bricks are classified, for each fixed pair $(\mathbf{m},\ell)$, by the $\Aut_B(N_{\mathbf{m}})$-orbits in $\mathcal{L}(\mathbf{m},\ell)$. Then, $(1)\Rightarrow (2)$ follows from Proposition~\ref{prop:strict-infinite-bricks}, and $(2)\Rightarrow (1)$ follows from Theorem~\ref{thm::critical-pair-one-orbit}. Next, $(2)$ and $(3)$ are obviously equivalent by our constructions. 
We note that every brick $N\in\mod B$ gives a brick $(N,0,0)$ in $\mod \Lambda$. Then, through a simple classification of bricks in $\mod \Lambda$, one concludes that $\Lambda$ is brick-finite if and only if $B$ is brick-finite.
\end{proof}

In the following, we give some restrictions on the extension module $E$ from the assumption that $\Lambda=B[E]$ is brick-finite. 

\begin{proposition}\label{prop:quotient-bricks}
Let $K$ be a proper $B$-submodule of $E$ and $q_K\colon E\twoheadrightarrow E/K$ be the canonical epimorphism. We define $\eta_K\colon\Bbbk\longrightarrow\Hom_B(E,E/K)$ by $\eta_K(1)=q_K$. Then,
\[
M_K:=(E/K,\Bbbk,\eta_K)
\]
is a mixed brick in $\mod \Lambda$. Moreover, $M_K\cong M_{K'}$ if and only if $K=K'$.
\end{proposition}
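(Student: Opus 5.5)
To show that $M_K$ is a mixed brick, I will verify the two conditions of Proposition~\ref{prop::mixed-brick-criterion} for $N=E/K$, $W=\Bbbk$ and $\eta=\eta_K$.

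First, $M_K$ is mixed: $N=E/K\neq0$ because $K$ is proper, and $W=\Bbbk\neq0$. Next, $\eta_K$ is injective because $q_K\neq0$ (again since $K\neq E$). So $\Im\eta_K=\Bbbk q_K$, and it remains to show $\Stab_{\End_B(E/K)}(\Bbbk q_K)=\Bbbk\operatorname{id}_{E/K}$.

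Let $f\in\End_B(E/K)$ with $f\circ q_K\in\Bbbk q_K$. Then $f\circ q_K=\lambda q_K$ for some $\lambda\in\Bbbk$. Since $q_K$ is an epimorphism, it is right-cancellable, so $f=\lambda\operatorname{id}_{E/K}$. Conversely, every scalar multiple of the identity obviously stabilizes $\Bbbk q_K$. Proposition~\ref{prop::mixed-brick-criterion} then gives that $M_K$ is a mixed brick.

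For the isomorphism statement, I will not use Proposition~\ref{prop::iso-orbit}, since the $B$-parts $E/K$ and $E/K'$ differ a priori. Instead I will work directly with morphisms of triples. Suppose $(f,g)\colon M_K\to M_{K'}$ is an isomorphism. Then $f\colon E/K\to E/K'$ is a $B$-isomorphism, $g\colon\Bbbk\to\Bbbk$ is multiplication by some $\mu\neq0$, and the compatibility condition $f_*\circ\eta_K=\eta_{K'}\circ g$ evaluated at $1$ reads $f\circ q_K=\mu q_{K'}$. Taking kernels, $\ker(f\circ q_K)=\ker q_K=K$ because $f$ is injective, while $\ker(\mu q_{K'})=K'$ because $\mu\neq0$. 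Hence $K=K'$. The converse direction is trivial.

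I expect no real obstacle. The only point needing care is that the isomorphism criterion must compare modules with different $B$-parts, which is why I argue with the compatibility condition for morphisms of triples directly rather than through an orbit statement.
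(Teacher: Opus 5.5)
Your proof is correct and follows the paper's argument step for step. You get injectivity of $\eta_K$ from $q_K\neq0$, obtain the scalar stabilizer by right-cancelling the epimorphism $q_K$, and prove the isomorphism statement by taking kernels in $f\circ q_K=\mu q_{K'}$; the extra details you supply (why $E/K\neq0$, and why the injectivity of $f$ and $\mu\neq0$ give the kernel equalities) only spell out what the paper leaves implicit.
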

\begin{proof}
Since $q_K\neq0$, the map $\eta_K$ is injective and $\Im\eta_K=\Bbbk q_K$. If
$f\in\End_B(E/K)$ stabilizes $\Im\eta_K$, then $fq_K=\lambda q_K$ for some $\lambda\in\Bbbk$. The surjectivity of $q_K$ gives
$f=\lambda\operatorname{id}_{E/K}$. Then, Proposition~\ref{prop::mixed-brick-criterion} shows that $M_K$ is a
mixed brick. If $(f,g)\colon M_K\to M_{K'}$ is an isomorphism, the compatibility condition gives $fq_K=\lambda' q_{K'}$ for some $\lambda'\in\Bbbk^{\times}$. Taking kernels on both sides yields $K=K'$.
\end{proof}

Let $\operatorname{brick}\Lambda$ denote the set of bricks in $\mod\Lambda$, and denote its cardinality by $|\operatorname{brick}\Lambda|$.

\begin{corollary}\label{cor:submodule-bound}
If $\Lambda$ is brick-finite, then $E$ has only finitely many $B$-submodules. More precisely,
\[
|\operatorname{brick}\Lambda|
\geq
|\operatorname{brick}B|
+
|\operatorname{Sub}_B(E)|,
\]
where $\operatorname{Sub}_B(E)$ denotes the set of all $B$-submodules of $E$.
\end{corollary}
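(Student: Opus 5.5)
The plan is to partition $\operatorname{brick}\Lambda$ into three disjoint families and count each one separately. The families are the non-mixed bricks $(N,0,0)$ with $N$ a brick in $\mod B$, the simple module $(0,\Bbbk,0)$, and the mixed bricks. These classes cannot overlap, since the type of a brick is determined by which of $N=Me_B$ and $W=Me_0$ are nonzero, and this is an isomorphism invariant.

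First, the assignment $N\mapsto (N,0,0)$ is injective on isomorphism classes, because an isomorphism $(N,0,0)\cong(N',0,0)$ restricts to an isomorphism $N\cong N'$. As already noted, $(N,0,0)$ is a brick in $\mod\Lambda$ exactly when $N$ is a brick in $\mod B$. This family therefore contributes $|\operatorname{brick}B|$ classes.

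Second, I use Proposition~\ref{prop:quotient-bricks} to attach to every proper submodule $K\subsetneq E$ a mixed brick $M_K=(E/K,\Bbbk,\eta_K)$. Distinct proper submodules give nonisomorphic bricks, so this yields at least $|\operatorname{Sub}_B(E)|-1$ pairwise nonisomorphic mixed bricks. The one submodule left uncounted is $K=E$, and I compensate for it with the simple brick $(0,\Bbbk,0)$. This pairing is natural: formally $E/E=0$, and $M_E$ would be $(0,\Bbbk,0)$. I would record this matching explicitly to avoid an off-by-one error. Adding the three contributions gives
\[
|\operatorname{brick}\Lambda|\geq |\operatorname{brick}B|+\bigl(|\operatorname{Sub}_B(E)|-1\bigr)+1 .
\]

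Finally, if $\Lambda$ is brick-finite, the left-hand side is finite, so $\operatorname{Sub}_B(E)$ is finite. I expect no real obstacle here. The only care needed is the bookkeeping for $K=E$, and making sure the three families are genuinely disjoint so that the counts add, which follows from the type observation above.
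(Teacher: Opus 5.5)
Your proposal is correct and follows essentially the same route as the paper: it splits $\operatorname{brick}\Lambda$ into the old bricks $(N,0,0)$, the simple $(0,\Bbbk,0)$, and the mixed bricks $M_K$ from Proposition~\ref{prop:quotient-bricks}, with the proper submodules contributing $|\operatorname{Sub}_B(E)|-1$. Matching $K=E$ with $(0,\Bbbk,0)$ gives exactly the same count as the paper's.
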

\begin{proof}
By Proposition~\ref{prop:quotient-bricks}, distinct proper submodules $K$ of $E$ give pairwise nonisomorphic mixed bricks $M_K$. The old bricks $(X,0,0)$ in $\mod B$, the new simple brick $(0,\Bbbk,0)$, and the mixed bricks $M_K$ are pairwise distinct types. Since the proper submodules of $E$ contribute $|\operatorname{Sub}_B(E)|-1$ mixed bricks, the stated inequality follows.
\end{proof}

\begin{proposition}
Suppose that $B$ is $E$-visible finite. If there are two epimorphisms $\varphi,\varphi'\colon E\twoheadrightarrow N$ with different kernels, then $\Lambda$ is brick-infinite.
\end{proposition}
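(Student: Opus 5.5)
The plan is to produce infinitely many pairwise distinct proper submodules $K_t\subseteq E$ with $E/K_t\cong N$, and then to invoke Proposition~\ref{prop:quotient-bricks}, which says that distinct proper submodules give pairwise nonisomorphic mixed bricks $M_{K_t}$. The family comes from the pencil spanned by $\varphi$ and $\varphi'$ inside $\Hom_B(E,N)$. The hypothesis of $E$-visible finiteness will not be needed. We may assume $N\neq 0$: otherwise $\varphi$ and $\varphi'$ are both zero and have the same kernel $E$.

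\textbf{Step 1.} Set $K:=\ker\varphi$ and $K':=\ker\varphi'$, so that $K\neq K'$. For $t\in\Bbbk$, put $\psi_t:=\varphi+t\varphi'\in\Hom_B(E,N)$. After choosing bases, $\psi_t$ is a $\dim_{\Bbbk}N\times\dim_{\Bbbk}E$ matrix whose entries are polynomial in $t$. Surjectivity of $\psi_t$ means $\operatorname{rank}\psi_t\geq\dim_{\Bbbk}N$, and by Example~\ref{ex::open-set} this is a Zariski-open condition on $t\in\mathbb{A}^1$. It holds at $t=0$, since $\psi_0=\varphi$ is surjective. So the open set is nonempty, and it is cofinite because $\Bbbk$ is algebraically closed and hence infinite. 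Thus $\psi_t$ is an epimorphism for all $t$ in an infinite set $T\subseteq\Bbbk$. For $t\in T$, set $K_t:=\ker\psi_t$. Since $N\neq0$, each $K_t$ is a proper submodule of $E$, with $E/K_t\cong N$ and $\dim_{\Bbbk}K_t=\dim_{\Bbbk}E-\dim_{\Bbbk}N=\dim_{\Bbbk}K$.

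\textbf{Step 2, the key step.} We need infinitely many of the $K_t$ to be distinct. Suppose instead that only finitely many distinct kernels occur as $t$ ranges over $T$. Then one submodule $K_0$ equals $K_t$ for infinitely many $t\in T$, in particular for two distinct values $s\neq t$. For $k\in K_0$ we then have
\[
\varphi(k)+s\varphi'(k)=0=\varphi(k)+t\varphi'(k),
\]
which forces $\varphi'(k)=0$ and hence $\varphi(k)=0$. So $K_0\subseteq K\cap K'$. Since $\dim_{\Bbbk}K_0=\dim_{\Bbbk}K=\dim_{\Bbbk}K'$, this gives $K_0=K=K'$, contradicting $K\neq K'$. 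Hence $\{K_t\mid t\in T\}$ is infinite.

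\textbf{Step 3.} By Proposition~\ref{prop:quotient-bricks}, the modules $M_{K_t}=(E/K_t,\Bbbk,\eta_{K_t})$ are mixed bricks, and they are pairwise nonisomorphic for distinct $K_t$. Therefore $\Lambda$ has infinitely many isomorphism classes of bricks, so it is brick-infinite. Equivalently, by Corollary~\ref{cor:submodule-bound}, $E$ has infinitely many submodules.

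I expect the only real obstacle to be Step 2: distinct parameters $t$ need not a priori give distinct kernels. The linear-in-$t$ argument combined with the dimension count resolves it.
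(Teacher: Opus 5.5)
Your proof is correct, but it takes a different route from the paper.

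\textbf{The paper's route.} Since $\varphi$ is surjective, $f\mapsto f\circ\varphi$ embeds $\End_B(N)$ into $\Hom_B(E,N)$. The map $\varphi'$ is not in the image: a factorization $\varphi'=f\circ\varphi$ forces $f$ to be surjective, hence an automorphism, hence $\ker\varphi'=\ker\varphi$. This gives $\dim_{\Bbbk}\Hom_B(E,N)>\dim_{\Bbbk}\End_B(N)$. Writing $N\cong N_{\mathbf{m}}$ (this is where the $E$-visible finiteness framework is used), $(\mathbf{m},1)$ becomes a strict admissible pair realized by $M_{\ker\varphi}$ from Proposition~\ref{prop:quotient-bricks}. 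The orbit-dimension argument of Proposition~\ref{prop:strict-infinite-bricks} then yields infinitely many $\Aut_B(N_{\mathbf{m}})$-orbits in $\mathcal{L}(\mathbf{m},1)$, hence infinitely many mixed bricks.

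\textbf{Your route.} You construct the infinite family explicitly. The pencil $\varphi+t\varphi'$ stays surjective on a cofinite set of parameters. Your linear-in-$t$ argument, combined with the equal-dimension count, shows that distinct parameters give distinct kernels. After that, Proposition~\ref{prop:quotient-bricks} alone finishes the proof. In fact Step~2 shows directly that $K_s\neq K_t$ for all $s\neq t$ in $T$, so the pigeonhole detour is unnecessary.

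\textbf{What each approach buys.} Yours is elementary: it uses no algebraic group actions, orbit dimensions, or Grassmannian geometry, and it does not need the $E$-visible finite hypothesis. It also recovers the same-dimension-vector conclusion directly, since every $M_{K_t}$ has $B$-part isomorphic to $N$ and one-dimensional part at the extension vertex, i.e.\ dimension vector $(\underline{\dim}_B N,1)$. The paper's argument instead places the statement inside its general framework of realized strict admissible pairs. It also extracts the reusable numerical criterion $\dim_{\Bbbk}\Hom_B(E,N)>\dim_{\Bbbk}\End_B(N)$, which is the kind of test that framework is designed to apply.
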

\begin{proof}
Consider the linear map $\rho_{N}^\varphi \colon \End_B(N)\rightarrow \Hom_B(E,N)$ given by $f\mapsto f\circ \varphi$. Since $\varphi$ is an epimorphism, the map $\rho_{N}^\varphi $ is injective. Suppose that $\varphi'=f\circ \varphi$ for some $f\in\End_B(N)$. Since both $\varphi, \varphi'$ are surjective, we have $N=\varphi'(E)=f(\varphi(E))=f(N)$, and hence $f$ is an automorphism. It follows that $\ker\varphi'=\ker(f\circ \varphi)=\ker\varphi$, which contradicts the assumption. We obtain $\varphi'\notin\operatorname{Im}(\rho_{N}^\varphi )$, and then $\dim_{\Bbbk}\Hom_B(E,N)>
\dim_{\Bbbk}\End_B(N)$.

By Proposition~\ref{prop:quotient-bricks}, $(N,\Bbbk,\eta_\varphi)$ with $\eta_\varphi(1)=\varphi$ is a mixed brick in $\mod \Lambda$. We write $N\cong N_{\mathbf{m}}$ for some multiplicity vector $\mathbf{m}$, and then $(\mathbf{m}, 1)$ is a realized strict admissible pair.  Proposition~\ref{prop:strict-infinite-bricks} then implies that there are infinitely many isomorphism classes of mixed bricks.
\end{proof}

In order to prove brick-infiniteness of $\Lambda$, it is useful to have concrete criteria which guarantee the realization of a strict admissible pair. The next result gives such a criterion in terms of semibricks: a nonempty collection $\{X_i\mid i= 1,2,\ldots,s\}$ of pairwise nonisomorphic $B$-modules is called a \emph{semibrick} in $\mod B$ if each $X_i$ is a brick and $\Hom_B(X_i,X_j)=0$ for all distinct $1\le i,j\le s$. The point is that, when the $B$-module part $N$ is a direct sum of pairwise Hom-orthogonal $E$-visible bricks, its endomorphism algebra is diagonal. This makes it possible to choose a subspace of $\Hom_B(E,N)$ whose stabilizer is exactly the scalar endomorphisms.

\begin{proposition}\label{prop:semibrick-strict}
Let $I\subseteq\{1,2,\ldots,t\}$ be a nonempty subset such that $\{X_i\mid i\in I\}$ is a semibrick in $\mod B$. Set $\mathbf{m}_I:=\sum_{i\in I}\mathbf{e}_i$, where $\mathbf{e}_i$ denotes the $i$-th standard basis vector of $\mathbb{Z}^t$, and set $s:=|I|$ and $h(\mathbf{m}_I)=\sum_{i\in I}d_i$.
If there exists an integer $\ell$ such that $1\leq\ell\leq h(\mathbf{m}_I)-1$ and $s-1<\ell(h(\mathbf{m}_I)-\ell)$, then $(\mathbf{m}_I,\ell)$ is a strict admissible pair realized by a mixed brick. 
\end{proposition}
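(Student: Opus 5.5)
The plan is to reduce everything to the diagonal shape of $\End_B(N_{\mathbf{m}_I})$ and then show, by a dimension count in the Grassmannian, that a general $\ell$-dimensional subspace has scalar stabilizer. Set $N:=N_{\mathbf{m}_I}=\bigoplus_{i\in I}X_i$ and $H_i:=\Hom_B(E,X_i)$. Then $\dim_{\Bbbk}H_i=d_i\geq1$ because each $X_i$ is $E$-visible, and $H:=\Hom_B(E,N)=\bigoplus_{i\in I}H_i$ has dimension $h(\mathbf{m}_I)$.

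\textbf{Step 1: the numerics.} Since $\{X_i\}_{i\in I}$ is a semibrick, $\End_B(N)\cong\prod_{i\in I}\Bbbk\operatorname{id}_{X_i}$. Hence $r(\mathbf{m}_I)=s$, and the hypothesis $s-1<\ell(h(\mathbf{m}_I)-\ell)$ says exactly that $(\mathbf{m}_I,\ell)$ is a strict admissible pair. It remains to show $\mathcal{L}(\mathbf{m}_I,\ell)\neq\varnothing$; Proposition~\ref{prop::mixed-bricks-orbits} then gives a realizing mixed brick. If $s=1$, every $L\in\Gr_\ell(H)$ has scalar stabilizer, so we may assume $s\geq2$.

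\textbf{Step 2: the non-scalar stabilizer condition.} An endomorphism $f=(\lambda_i)_{i\in I}$ acts on $H$ by $\lambda_i$ on $H_i$. For each eigenvalue $\lambda$, put $J_\lambda=\{i\in I\mid \lambda_i=\lambda\}$ and $H_J:=\bigoplus_{i\in J}H_i$. The subspace $L$ is $f_*$-stable if and only if it is a sum of its intersections with the eigenspaces, that is,
\[
L=\bigoplus_\lambda \bigl(L\cap H_{J_\lambda}\bigr).
\]
So the stabilizer of $L$ is larger than $\Bbbk\operatorname{id}_N$ if and only if there is a partition $I=J\sqcup J^c$ with $J,J^c\neq\varnothing$ such that $L=(L\cap H_J)\oplus(L\cap H_{J^c})$. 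Conversely, such a splitting makes the idempotent $f=\operatorname{id}$ on $J$ and $0$ on $J^c$ lie in the stabilizer.

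\textbf{Step 3: a general $L$ avoids every bad set.} For each such $J$, the bad set $\mathcal{B}_J$ is the finite union over $a$ of the images of the morphisms
\[
\Gr_a(H_J)\times\Gr_{\ell-a}(H_{J^c})\longrightarrow\Gr_\ell(H),\qquad (L_1,L_2)\mapsto L_1\oplus L_2 .
\]
Each such image is closed, since the source is projective, and has dimension at most
\[
a(h_J-a)+(\ell-a)(h_{J^c}-\ell+a)=\ell(h-\ell)-\bigl[a(h_{J^c}-\ell+a)+(\ell-a)(h_J-a)\bigr],
\]
where $h=h(\mathbf{m}_I)$, $h_J=\dim_{\Bbbk}H_J$ and $h_{J^c}=\dim_{\Bbbk}H_{J^c}$. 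I will check that the bracket is positive. Both summands are nonnegative. If both vanished, we would need ($a=0$ or $\ell-a=h_{J^c}$) and ($a=\ell$ or $a=h_J$). Going through the four combinations, each forces one of $h_J=0$, $h_{J^c}=0$, $\ell=0$ or $\ell=h$, and all of these are excluded by $d_i\geq1$ and $1\leq\ell\leq h-1$.

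Therefore each $\mathcal{B}_J$ is a proper closed subset of $\Gr_\ell(H)$. Since $\Gr_\ell(H)$ is irreducible of dimension $\ell(h-\ell)$ (Proposition~\ref{prop::grassmannian-dimension}), the finite union $\bigcup_J\mathcal{B}_J$ is still proper. Its complement is exactly $\mathcal{L}(\mathbf{m}_I,\ell)$ by Step 2, so it is nonempty, which finishes the argument.

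I expect the main obstacle to be Step 2, namely the clean equivalence between having a non-scalar stabilizer and splitting along a nontrivial partition of $I$. Step 3 is then routine bookkeeping, apart from the care needed to use $d_i\geq1$ and $\ell\neq0,h$ so that the dimension drop is strictly positive.
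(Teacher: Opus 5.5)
Your proof is correct and follows the paper's argument. The semibrick condition gives $r(\mathbf{m}_I)=s$. Subspaces stable under a non-scalar, hence semisimple, $f_*$ split along its eigenspace blocks, and these split loci are closed images of products of Grassmannians of strictly smaller dimension. Irreducibility of $\Gr_\ell$ then yields a subspace with scalar stabilizer.

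The only difference is cosmetic. You reduce to two-block splittings $I=J\sqcup J^c$ via the idempotent on $J$, so the positivity of the dimension drop becomes an explicit four-case check. The paper instead handles the general $m$-block sum $\sum_{p\neq q}\ell_p(k_q-\ell_q)$.
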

\begin{proof}
Set $N_{\mathbf{m}_I}:=\bigoplus_{i\in I}X_i$. We have 
\[
\End_B(N_{\mathbf{m}_I})\cong\prod_{i\in I}\End_B(X_i)\cong\Bbbk^s.
\]
In particular, $r(\mathbf{m}_I)=s$. Thus, the inequality $s-1<\ell(h(\mathbf{m}_I)-\ell)$ says precisely that $(\mathbf{m}_I,\ell)$ is a strict admissible pair. It remains to show that $(\mathbf{m}_I,\ell)$ is realized by a mixed brick. An endomorphism of $N_{\mathbf{m}_I}$ is given by a tuple $(\lambda_i)_{i\in I}$, and its action on $\Hom_B(E,N_{\mathbf{m}_I})$ by post-composition is scalar multiplication by $\lambda_i$ on the direct summand $\Hom_B(E,X_i)$.

We now show that the subspaces stabilised by some non-scalar endomorphism of $N_{\mathbf{m}_I}$ do not cover the whole Grassmannian. Let $f\in\End_B(N_{\mathbf{m}_I})$ be a non-scalar endomorphism. Under the identification $\End_B(N_{\mathbf{m}_I})\cong\Bbbk^s$, the endomorphism $f$ acts on each direct summand $\Hom_B(E,X_i)$ of $\Hom_B(E,N_{\mathbf{m}_I})$ by a scalar.
Since $f$ is non-scalar, not all these scalars are equal.
Grouping the summands $\Hom_B(E,X_i)$ according to these scalars, we obtain a nontrivial decomposition
\[
\Hom_B(E,N_{\mathbf{m}_I})=K_1\oplus K_2\oplus\cdots\oplus K_m
\]
with $m\geq2$, such that $f$ acts on each $K_j$ by a scalar and these scalars are pairwise distinct.
If an $\ell$-dimensional subspace $L\subseteq\Hom_B(E,N_{\mathbf{m}_I})$ satisfies $f_*(L)\subseteq L$, then $L$ is stable under this semisimple operator $f$, and hence $L=(L\cap K_1)\oplus\cdots\oplus(L\cap K_m)$. We set $\ell_j:=\dim(L\cap K_j)$, then $\ell_1+\ell_2+\cdots+\ell_m=\ell$, and $L$ is obtained by choosing subspaces $L_j\in\Gr_{\ell_j}(K_j)$ and taking $L=L_1\oplus L_2\oplus\cdots\oplus L_m$.
For this fixed non-scalar endomorphism $f$, the set of $\ell$-dimensional subspaces stable under $f$ is contained in the finite union, over all tuples $(\ell_1,\ell_2,\ldots,\ell_m)$ with $\ell_1+\ell_2+\cdots+\ell_m=\ell$, of the images of the natural morphisms
\[
\begin{aligned}
\Gr_{\ell_1}(K_1)\times\cdots\times\Gr_{\ell_m}(K_m)
&\longrightarrow
\Gr_\ell(\Hom_B(E,N_{\mathbf{m}_I}))\\
(L_1,L_2,\ldots,L_m)&\longmapsto L_1\oplus L_2\oplus\cdots\oplus L_m .
\end{aligned}
\]
This finite union is closed, because each source is projective and the image of a projective variety under a morphism is closed; see \cite[Chapter~I, Section 5, Theorem~1.10]{Shafarevich-2013}. As $f$ varies over all non-scalar endomorphisms, these decompositions are indexed by the finitely many nontrivial partitions of the finite set $I$, because they are obtained by grouping the finitely many summands $\Hom_B(E,X_i)$.

It remains to show that this closed subset is proper.
Let $k_j:=\dim K_j$. For a fixed tuple $(\ell_1,\ell_2,\ldots,\ell_m)$, the dimension of the source is $\sum_{j=1}^m\ell_j(k_j-\ell_j)$. Since $\ell=\sum_{j=1}^m\ell_j$ and $h(\mathbf{m}_I)=\sum_{j=1}^m k_j$, we have
\[
\ell(h(\mathbf{m}_I)-\ell)-\sum_{j=1}^m\ell_j(k_j-\ell_j)
=
\sum_{p\neq q}\ell_p(k_q-\ell_q).
\]
The right-hand side is positive because the decomposition is nontrivial and $1\leq\ell\leq h(\mathbf{m}_I)-1$. Thus, every such image has dimension strictly smaller than $\dim\Gr_\ell(\Hom_B(E,N_{\mathbf{m}_I}))=\ell(h(\mathbf{m}_I)-\ell)$.
Consequently, the set of $\ell$-dimensional subspaces stable under a non-scalar endomorphism is contained in a proper closed subset of $\Gr_\ell(\Hom_B(E,N_{\mathbf{m}_I}))$. There are only finitely many decompositions of $\Hom_B(E,N_{\mathbf{m}_I})$ obtained by grouping the summands $\Hom_B(E,X_i)$.
Therefore, the subspaces stable under some non-scalar endomorphism of $N_{\mathbf{m}_I}$ are contained in a finite union of proper closed subsets of $\Gr_\ell(\Hom_B(E,N_{\mathbf{m}_I}))$.
Since $\Gr_\ell(\Hom_B(E,N_{\mathbf{m}_I}))$ is irreducible, this finite union cannot be the whole Grassmannian.
Hence, there exists a subspace $L\in\Gr_\ell(\Hom_B(E,N_{\mathbf{m}_I}))$ such that $\Stab_{\End_B(N_{\mathbf{m}_I})}(L)=\Bbbk\operatorname{id}_{N_{\mathbf{m}_I}}$.

Choose such a subspace $L$, and let $\iota\colon L\hookrightarrow \Hom_B(E,N_{\mathbf{m}_I})$ be the natural inclusion.
By Proposition~\ref{prop::mixed-brick-criterion}, the triple $(N_{\mathbf{m}_I},L,\iota)$ is a mixed brick.
Thus, $(\mathbf{m}_I,\ell)$ is realized by a mixed brick.
\end{proof}

\begin{corollary}\label{cor:visible-obstructions}
Suppose that $\Lambda=B[E]$ is brick-finite. Then the following statements hold.
\begin{enumerate}
\item If $X_i$ is an $E$-visible brick, then $\dim_{\Bbbk}\Hom_B(E,X_i)=1$. 

\item Every $E$-visible semibrick contains at most three elements.
\end{enumerate}
\end{corollary}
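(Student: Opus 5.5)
Both parts follow from Proposition~\ref{prop:semibrick-strict} combined with Proposition~\ref{prop:strict-infinite-bricks}, argued by contraposition. In each case I will produce a subset $I$ such that $\{X_i\mid i\in I\}$ is an $E$-visible semibrick and an integer $\ell$ with $1\leq \ell\leq h(\mathbf{m}_I)-1$ and $s-1<\ell(h(\mathbf{m}_I)-\ell)$, where $s=|I|$. Proposition~\ref{prop:semibrick-strict} then gives a strict admissible pair realized by a mixed brick. By Proposition~\ref{prop:strict-infinite-bricks}, $\Lambda$ is then brick-infinite, which contradicts the hypothesis.

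For (1), let $X_i$ be an $E$-visible brick, so $d_i\geq 1$. Suppose $d_i\geq 2$. The singleton $\{X_i\}$ is a semibrick with $s=1$ and $h(\mathbf{m}_{\{i\}})=d_i$. Take $\ell=1$; then $1\leq \ell\leq d_i-1$, and the required inequality reads $0<d_i-1$, which holds. This gives the contradiction, so $d_i=1$.

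For (2), let $\{X_i\mid i\in I\}$ be an $E$-visible semibrick with $s=|I|\geq 4$. Each $X_i$ is an $E$-visible brick, so $d_i\geq 1$ and $h:=h(\mathbf{m}_I)=\sum_{i\in I}d_i\geq s$. (By (1) we could even take $d_i=1$ and $h=s$, but this is not needed.) Take $\ell=\lfloor h/2\rfloor$. Since $h\geq 4$, we have $1\leq \ell\leq h-1$. It remains to check $s-1<\lfloor h^2/4\rfloor$, and since $h\geq s$ it suffices to check $s-1<\lfloor s^2/4\rfloor$ for $s\geq 4$. For $s=4$ this is $3<4$. For larger $s$ the right-hand side grows quadratically, so the inequality continues to hold; concretely, $\lfloor s^2/4\rfloor\geq (s^2-1)/4>s-1$ is equivalent to $s^2-4s+3>0$, i.e.\ $(s-1)(s-3)>0$, which is true for $s\geq 4$. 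Proposition~\ref{prop:semibrick-strict} then gives the contradiction.

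The argument has no genuine obstacle. The only points to check are the elementary inequality in (2) and the range condition $\ell\leq h-1$, which is exactly where the bounds $s\geq 4$ and $d_i\geq 2$ enter. Note that for $s=3$ and $h=3$ we get $\lfloor 9/4\rfloor=2=s-1$, so the inequality is not strict, which explains why the bound in (2) is three.
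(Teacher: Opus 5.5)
Your proof is correct and matches the paper's argument: for (1) you apply Proposition~\ref{prop:semibrick-strict} with $I=\{i\}$ and $\ell=1$, for (2) with $\ell=\lfloor h(\mathbf{m}_I)/2\rfloor$ and $h(\mathbf{m}_I)\geq s\geq 4$, and in both cases you conclude via Proposition~\ref{prop:strict-infinite-bricks}. Your check $\lfloor s^2/4\rfloor\geq (s^2-1)/4>s-1 \iff (s-1)(s-3)>0$ simply makes explicit the elementary inequality that the paper states without justification.
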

\begin{proof}
If $X_i$ is an $E$-visible brick with $d_i\geq2$, then applying Proposition~\ref{prop:semibrick-strict} to $I=\{i\}$ and $\ell=1$, we obtain a realized strict admissible pair. Proposition~\ref{prop:strict-infinite-bricks} would then imply that $\Lambda$ is brick-infinite.

Let $\{X_i\mid i\in I\}$ be an $E$-visible semibrick and set $s:=|I|$. Suppose, for a contradiction, that $s\geq4$. We have $d_i\geq1$ and $h(\mathbf{m}_I)=\sum_{i\in I}d_i\geq s$. Set $\ell:=\left\lfloor h(\mathbf{m}_I)/2\right\rfloor$, we have $1\leq\ell\leq h(\mathbf{m}_I)-1$. Moreover,
\[
\begin{aligned}
\ell\bigl(h(\mathbf{m}_I)-\ell\bigr)=
\left\lfloor\frac{h(\mathbf{m}_I)^2}{4}\right\rfloor\geq
\left\lfloor\frac{s^2}{4}\right\rfloor
>
s-1,
\end{aligned}
\]
holds for every $s\geq4$. Then, Proposition~\ref{prop:semibrick-strict} and Proposition~\ref{prop:strict-infinite-bricks} show that $\Lambda$ is brick-infinite, contradicting our assumption.
\end{proof}

The preceding restrictions, together with the finiteness of the submodule lattice of $E$, strongly restrict the Krull--Schmidt decomposition of $E$.

\begin{proposition}\label{prop:E-decomposition}
Suppose that $\Lambda=B[E]$ is brick-finite, and write $E\cong E_1^{a_1}\oplus\cdots\oplus E_s^{a_s}$, where $E_1,\ldots,E_s$ are pairwise nonisomorphic indecomposable $B$-modules and $a_i\geq1$. Then, the following statements hold.
\begin{enumerate}
\item We have $a_i=1$ for every $1\leq i\leq s$, and $s\leq3$.

\item If $i\neq j$, then there is no nonzero homomorphism from a subfactor of $E_i$ to a subfactor of $E_j$. In particular, $\Hom_B(E_i,E_j)=\Hom_B(E_j,E_i)=0$ for any $i\neq j$.
\end{enumerate}
\end{proposition}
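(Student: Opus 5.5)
The plan is to derive everything from Corollary~\ref{cor:submodule-bound}: since $\Lambda$ is brick-finite, $E$ has only finitely many $B$-submodules. Each of the forbidden configurations will be shown to produce a one-parameter family of pairwise distinct submodules of $E$. Part~(2) is proved first, because the bound $s\leq3$ will then follow from Corollary~\ref{cor:visible-obstructions}(2).

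\emph{Step 1 (the key construction).} Let $i\neq j$, let $V_i\subseteq U_i\subseteq E_i$ and $V_j\subseteq U_j\subseteq E_j$ be submodules, and let $0\neq g\colon U_i/V_i\to U_j/V_j$ be a $B$-homomorphism. For $\lambda\in\Bbbk$ define
\[
K_\lambda:=\bigl\{(u,w)\in U_i\oplus U_j \ \big|\ \lambda\, g(u+V_i)=w+V_j\bigr\}\subseteq E_i\oplus E_j\subseteq E .
\]
Equivalently, $K_\lambda$ is the preimage under $U_i\oplus U_j\twoheadrightarrow U_i/V_i\oplus U_j/V_j$ of the graph of $\lambda g$. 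That graph is a $B$-submodule, so $K_\lambda$ is a $B$-submodule of $E$.

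To see that the $K_\lambda$ are pairwise distinct, choose $u\in U_i$ with $g(u+V_i)\neq0$, and choose $w\in U_j$ with $w+V_j=\lambda g(u+V_i)$. Then $(u,w)\in K_\lambda$. Moreover, $(u,w)\in K_\mu$ forces $(\lambda-\mu)g(u+V_i)=0$, hence $\lambda=\mu$. Since $\Bbbk$ is infinite, $E$ then has infinitely many submodules, contradicting Corollary~\ref{cor:submodule-bound}.

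This proves (2). The statement $\Hom_B(E_i,E_j)=0$ is the special case $U_i=E_i$, $U_j=E_j$, $V_i=V_j=0$.

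\emph{Step 2 ($a_i=1$).} Suppose $a_i\geq2$. Then $E_i\oplus E_i$ is a direct summand of $E$. Apply the same construction with $U=E_i$, $V=0$ and $g=\operatorname{id}_{E_i}$: the graphs $\{(x,\lambda x)\mid x\in E_i\}$, for $\lambda\in\Bbbk$, are pairwise distinct submodules of $E$, because $E_i\neq0$. This again contradicts Corollary~\ref{cor:submodule-bound}.

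\emph{Step 3 ($s\leq3$).} For each $i$, choose a simple quotient $S_i$ of $E_i$. Composing the projection $E\twoheadrightarrow E_i$ with $E_i\twoheadrightarrow S_i$ gives a nonzero map, so $\Hom_B(E,S_i)\neq0$ and each $S_i$ is $E$-visible.

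If $S_i\cong S_j$ for some $i\neq j$, the isomorphism is a nonzero homomorphism from a subfactor of $E_i$ to a subfactor of $E_j$, which Step~1 rules out. Hence $S_1,\ldots,S_s$ are pairwise nonisomorphic simple modules. They are therefore bricks with $\Hom_B(S_i,S_j)=0$ for $i\neq j$, i.e.\ they form an $E$-visible semibrick of size $s$. Corollary~\ref{cor:visible-obstructions}(2) then gives $s\leq3$.

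\emph{Main obstacle.} The only delicate point is checking in Step~1 that $K_\lambda$ really is a submodule and that distinct $\lambda$ give distinct submodules. Both follow from working with the pullback of the graph of $\lambda g$, as described above, so no serious obstacle is expected.
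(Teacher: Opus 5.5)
Your proof is correct, but its key step follows a different route from the paper's. The paper also starts from Corollary~\ref{cor:submodule-bound}. It then invokes \cite[Theorem~2.2]{Iovanov-Koffi-2022} to conclude that $E$ has no subfactor isomorphic to $S^{\oplus 2}$ for a simple $S$, and argues entirely through composition factors:
\begin{itemize}
\item two copies of $E_i$ would produce such a subfactor;
\item so would a simple composition factor shared by $E_i$ and $E_j$;
\item a nonzero map between subfactors of $E_i$ and $E_j$ has an image with a common composition factor, so it is ruled out.
\end{itemize}
You instead build the infinite family of submodules directly, as preimages of the graphs of $\lambda g$. This is exactly the elementary mechanism behind the direction of the cited theorem that the paper uses (pull back the lines in $S\oplus S$ to $E$). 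You apply it to an arbitrary nonzero homomorphism between subfactors, so you need neither the external reference nor the detour through composition factors. Your Step~2 then becomes the special case $g=\operatorname{id}_{E_i}$ between two copies of $E_i$.

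What the paper's route buys is an explicit intermediate structural statement: no subfactor $S^{\oplus 2}$, which in particular forces distinct $E_i$ to share no composition factor. Your Step~1 recovers the latter as well, by taking $g$ to be an isomorphism between simple subfactors. What yours buys is that the argument is self-contained and uses only that $\Bbbk$ is infinite.

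The final step, $s\leq 3$ via simple quotients of the $E_i$ and Corollary~\ref{cor:visible-obstructions}(2), is the same in both.
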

\begin{proof}
By Corollary~\ref{cor:submodule-bound}, the module $E$ has only finitely many $B$-submodules. Then, \cite[Theorem~2.2]{Iovanov-Koffi-2022} implies that $E$ has no subfactor isomorphic to $S^{\oplus 2}$ for any simple $B$-module $S$. Suppose that $a_i\geq2$ for some $i$. Choose a simple composition factor $S$ of $E_i$, and choose submodules $V\subseteq U\subseteq E_i$ such that $U/V\cong S$. Using two copies of $E_i$ in $E_i^{a_i}$, we obtain a subfactor $S^{\oplus 2}$ of $E$, which is a contradiction. Therefore, $a_i=1$ for every $1\leq i\leq s$.

Suppose that distinct summands $E_i$ and $E_j$ have a common simple composition factor $S$. Choose subfactors $U_i/V_i$ of $E_i$ and $U_j/V_j$ of $E_j$ that are both isomorphic to $S$. Then, $(U_i\oplus U_j)/(V_i\oplus V_j)\cong S^{\oplus 2}$ is a subfactor of $E$, again a contradiction. Hence, distinct summands $E_i$ and $E_j$ have no common simple composition factor. More generally, let $U/V$ and $U'/V'$ be subfactors of $E_i$ and $E_j$, respectively, where $i\neq j$. If there is a nonzero homomorphism $U/V\longrightarrow U'/V'$, then its nonzero image would have a simple composition factor occurring in both $E_i$ and $E_j$, a contradiction. Therefore, every such homomorphism is zero. 

Choose a simple quotient $S_i$ of each $E_i$. Since distinct $E_i$ have no common simple composition factor, the simple modules $S_1,\ldots,S_s$ are pairwise nonisomorphic. Moreover, the composite epimorphism $E\twoheadrightarrow E_i\twoheadrightarrow S_i$ shows that every $S_i$ is $E$-visible. Thus, $\{S_1,\ldots,S_s\}$ is an $E$-visible semibrick and  Corollary~\ref{cor:visible-obstructions}$(2)$ gives $s\leq3$.
\end{proof}

\begin{corollary}
Suppose that $\Lambda=B[E]$ is brick-finite. In the Gabriel quiver of $\Lambda$, the extension vertex is a source from which at most three arrows start, and no two of these arrows have the same target.
\end{corollary}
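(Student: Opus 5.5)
The plan is to read the corollary directly off Proposition~\ref{prop:E-decomposition}, using the standard description of the quiver of a one-point extension. First I note why the extension vertex $0$ is a source. In $\Lambda=B[E]$ we have $e_B\Lambda e_0=0$ from the matrix shape. Arrows of the Gabriel quiver between $0$ and a vertex $i$ of $B$ are computed by $e_0(\operatorname{rad}\Lambda/\operatorname{rad}^2\Lambda)e_i$ and $e_i(\operatorname{rad}\Lambda/\operatorname{rad}^2\Lambda)e_0$. The second of these vanishes, and $e_0\Lambda e_0=\Bbbk$ contributes no loops. So $0$ admits only outgoing arrows, which makes it a source.

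Next I count the arrows leaving $0$. The radical of $\Lambda$ is the matrix $\begin{pmatrix}\operatorname{rad}B&0\\ E&0\end{pmatrix}$, so $e_0\operatorname{rad}^2\Lambda\, e_B=E\operatorname{rad}B$. Hence the arrows from $0$ to a vertex $i$ of $B$ are counted by
\[
\dim_{\Bbbk}(E/E\operatorname{rad}B)e_i=\dim_{\Bbbk}\Hom_B(E,S_i),
\]
which is the multiplicity of $S_i$ in $\top E$. This matches the introduction's remark that the new arrows are determined by the top of $E$, and I will follow the paper's right-module convention for arrow directions.

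Now I apply Proposition~\ref{prop:E-decomposition}, assuming $\Lambda$ is brick-finite. It gives $E=E_1\oplus\cdots\oplus E_s$ with $s\le3$, the $E_i$ pairwise nonisomorphic, and distinct $E_i$ sharing no simple composition factor. I then need to show $\top E_i$ is simple for each $i$. The proof of Proposition~\ref{prop:E-decomposition} shows that $E$ has no subfactor isomorphic to $S^{\oplus2}$. If $\top E_i$ contained $S\oplus S'$ with $S\ne S'$, I would not get $S^{\oplus 2}$, so that route fails.

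Instead I use an $E$-visible semibrick. All simple summands of $\top E$ are $E$-visible, since each is a quotient of $E$. Distinct simples are Hom-orthogonal bricks, so the pairwise nonisomorphic simples in $\top E$ form an $E$-visible semibrick. Corollary~\ref{cor:visible-obstructions}(2) then bounds their number by three. Each of them appears with multiplicity one, because a repeated simple would give a subfactor $S^{\oplus2}$ of $\top E$, and that is forbidden. Alternatively, Corollary~\ref{cor:visible-obstructions}(1) gives $\dim\Hom_B(E,S)=1$ for each simple brick $S$.

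Together these give $\top E=S_1\oplus\cdots\oplus S_k$ with pairwise distinct $S_j$ and $k\le3$. Therefore the source $0$ has at most three outgoing arrows, each to a different vertex. The main care point is the identification of the arrows with $\top E$ under the paper's conventions. The rest follows immediately from Corollary~\ref{cor:visible-obstructions}, and the decomposition into the $E_i$ is not really needed.
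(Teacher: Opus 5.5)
Your proof is correct and follows the paper's argument: you identify the arrows leaving the extension vertex with the multiplicities of simples in $\top E$, get multiplicity one from the absence of subfactors $S^{\oplus 2}$ (which the paper obtains from Corollary~\ref{cor:submodule-bound} and the Iovanov--Koffi theorem), and bound the number of distinct simples by three using Corollary~\ref{cor:visible-obstructions}(2). Your alternative route to multiplicity one is also valid: since $\dim_{\Bbbk}\Hom_B(E,S)$ equals the multiplicity of $S$ in $\top E$, Corollary~\ref{cor:visible-obstructions}(1) gives it directly and avoids the external citation.
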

\begin{proof}
The number of arrows from the extension vertex to the vertex
corresponding to a simple $B$-module $S$ is the multiplicity of $S$
in $\top E$. Write $\top E\cong S_1^{a_1}\oplus\cdots\oplus S_q^{a_q}$, where the $S_i$ are pairwise nonisomorphic and $a_i\geq1$. By
Corollary~\ref{cor:submodule-bound}, the module $E$ has only finitely
many submodules. Hence, \cite[Theorem~2.2]{Iovanov-Koffi-2022}
implies that $E$ has no subfactor isomorphic to $S^{\oplus2}$ for
any simple $B$-module $S$. Therefore, $a_i=1$ for every $i$.
The set $\{S_1,\ldots,S_q\}$ is an $E$-visible semibrick, so
Corollary~\ref{cor:visible-obstructions}$(2)$ gives $q\leq3$.
\end{proof}

\subsection{The second brick--Brauer--Thrall conjecture}
Recall that the second brick--Brauer--Thrall conjecture (2bBTC) predicts that every brick-infinite algebra admits infinitely many bricks of the same dimension. For one-point extension algebras, a realized strict admissible pair produces such a family directly.

\begin{proposition}\label{prop:strict-second-bbt}
Let $\Lambda=B[E]$. If $(\mathbf{m},\ell)$ is a realized strict admissible pair, then $\Lambda$ admits infinitely many mixed bricks with the same dimension vector.
\end{proposition}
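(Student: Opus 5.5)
The plan is to combine Proposition~\ref{prop:strict-infinite-bricks} with the observation that the dimension vector of a mixed brick is determined by its pair $(\mathbf{m},\ell)$. First, since $(\mathbf{m},\ell)$ is strict and realized, $\mathcal{L}(\mathbf{m},\ell)$ is nonempty. The proof of Proposition~\ref{prop:strict-infinite-bricks} shows, by comparing $\dim\mathcal{L}(\mathbf{m},\ell)=\ell\bigl(h(\mathbf{m})-\ell\bigr)$ with the orbit dimension $r(\mathbf{m})-1$, that $\mathcal{L}(\mathbf{m},\ell)$ contains infinitely many $\Aut_B(N_{\mathbf{m}})$-orbits. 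By Proposition~\ref{prop::mixed-bricks-orbits}, these orbits correspond bijectively to isomorphism classes of mixed bricks $(N,W,\eta)$ with $N\cong N_{\mathbf{m}}$ and $\dim_{\Bbbk}W=\ell$.

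Concretely, for each orbit I would take a representative $L$ and form the brick $(N_{\mathbf{m}},L,\iota)$. This gives infinitely many pairwise nonisomorphic mixed bricks, all sharing the same $B$-module part $N_{\mathbf{m}}$ and the same space dimension $\ell$.

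It remains to compute the dimension vector of such a module. Under the triple description, $Me_B=N$ and $Me_0=W$. So for each vertex $i$ of $B$, the dimension at $i$ is $\dim_{\Bbbk} N e_i=\dim_{\Bbbk} N_{\mathbf{m}}e_i$, and the dimension at the extension vertex is $\dim_{\Bbbk}W=\ell$. Hence every one of these bricks has dimension vector $\bigl(\underline{\dim}_B N_{\mathbf{m}},\ell\bigr)$, which is independent of $L$.

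No step here is a genuine obstacle, since the geometric input (infinitely many orbits) is already supplied by Proposition~\ref{prop:strict-infinite-bricks}. The only point to state carefully is that the identification of the idempotents $e_B$, $e_0$ with the $B$-part and the extension vertex makes the dimension vector depend only on $(N,\dim W)$. I would also remark that the resulting bricks form infinitely many bricks of a single dimension vector, which is exactly what 2bBTC asserts for $\Lambda$.
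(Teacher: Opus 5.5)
Your proposal is correct and follows essentially the same route as the paper. Both use the orbit-counting argument in the proof of Proposition~\ref{prop:strict-infinite-bricks} to obtain infinitely many $\Aut_B(N_{\mathbf{m}})$-orbits in $\mathcal{L}(\mathbf{m},\ell)$, hence infinitely many pairwise nonisomorphic mixed bricks realizing $(\mathbf{m},\ell)$, and then note that they all have dimension vector $\bigl(\underline{\dim}_B N_{\mathbf{m}},\ell\bigr)$.
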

\begin{proof}
By Proposition~\ref{prop:strict-infinite-bricks}, there are infinitely many pairwise nonisomorphic mixed bricks realizing $(\mathbf{m},\ell)$. Every such mixed brick has $B$-module part isomorphic to $N_{\mathbf{m}}$ and vector-space part of dimension $\ell$. Hence, they all have dimension vector $\bigl(\underline{\dim}_B N_{\mathbf{m}},\ell\bigr)$.
\end{proof}

Every brick in $\mod B$ gives a brick of the form $(N,0,0)$ in $\mod \Lambda$. Thus, if $B$ is brick-infinite and already satisfies the 2bBTC, then so does $\Lambda$. The essential remaining case is therefore when $B$ is brick-finite. In this case, the old bricks in $\mod B$ contribute only finitely many isomorphism classes, while $(0,\Bbbk,0)$ is the only brick supported at the extension vertex. It is reasonable to give the following conjecture, which is equivalent to the 2bBTC.

\begin{conjecture}\label{conj:critical-to-strict}
Suppose that $B$ is $E$-visible finite and brick-finite. If $\Lambda$ is brick-infinite, then there exists a realized strict admissible pair. Equivalently, if infinitely many critical admissible pairs are realized, then at least one strict admissible pair is realized.
\end{conjecture}

We now verify Conjecture~\ref{conj:critical-to-strict} in an elementary family of algebras.

\begin{theorem}\label{thm:interval-classification}
Let $B=\Bbbk Q_n$ as in Subsection \ref{subsec-example}, and take $E=M[a,b]$ for $1\leq a\leq b\leq n$. Then, $\Lambda=B[E]$ is brick-finite if and only if
\begin{equation}\label{Dynkin-class}
\frac1a+\frac1{b-a+1}>\frac12.
\end{equation}
\end{theorem}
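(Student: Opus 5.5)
Set $p:=a$ and $q:=b-a+1$. By the interval Hom formula, the $E$-visible indecomposables are the $M[c,d]$ with $1\le c\le a\le d\le b$. These form a $p\times q$ rectangle, each with $d_i=1$. For two modules in the rectangle, $\Hom_B(M[c,d],M[c',d'])\neq0$ if and only if $c'\le c$ and $d'\le d$, since the middle condition $c\le d'$ is automatic ($c\le a\le d'$). So the numerical data $h,r$ depend only on the poset $\mathcal{P}_{p,q}=[p]\times[q]$ with the product order. Moreover $\Lambda=B[E]$ is the path algebra of the quiver obtained from $Q_n$ by adding a source $0$ with one arrow to vertex $b$, modulo the relation that the path from $0$ through $b$ onward vanishes. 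I would also note that $\Lambda$ is a tree algebra whose underlying graph is the star with arms of lengths $a$, $b-a+1$ (through $0$) and $n-b$, though the relation cuts the third arm. Condition \eqref{Dynkin-class} is exactly the condition that the star $T_{p,q,2}$ is Dynkin, i.e.\ $(p,q)$ with $\min\le 2$, or $\{p,q\}=\{3,3\},\{3,4\},\{3,5\}$.

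For the \emph{finite} direction, I would invoke Theorem~\ref{thm:mixed-finite} and Theorem~\ref{thm:strict-pairs}. Since $B$ is representation-finite, it suffices to show that there are no strict admissible pairs, i.e.\ $Q(\mathbf{x})>0$ on $\mathbb{R}_{\ge0}^{pq}\setminus\{0\}$. The rectangle estimate from Subsection~\ref{subsec-example} gives
\[
r(\mathbf{x})\ge\tfrac12\Bigl(\tfrac1p+\tfrac1q\Bigr)h(\mathbf{x})^2,
\]
so $Q>0$ whenever $\frac1p+\frac1q>\frac12$. This is exactly \eqref{Dynkin-class}, and the finite direction then follows from Proposition~\ref{prop::mixed-bricks-finite}. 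I need to double-check that the estimate is valid on the full rectangle, including the same-row and same-column double counting. It is: $r$ counts all comparable pairs, which include same-row and same-column pairs, and the diagonal terms are counted once in each of the two groupings. So the estimate holds.

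For the \emph{infinite} direction, assume $\frac1p+\frac1q\le\frac12$, so that $p,q\ge3$ and $(p,q)$ contains one of $(3,6),(4,4),(6,3)$. I must exhibit a \emph{realized} strict admissible pair and then apply Proposition~\ref{prop:strict-infinite-bricks}. Proposition~\ref{prop:semibrick-strict} needs a semibrick of size at least $4$, but the rectangle has antichains of size $\min(p,q)$, which are Hom-orthogonal. This only helps when $\min(p,q)\ge4$ (taking $s=4$, $\ell=2$ gives $3<4$), and it fails for $(3,q)$ with $q\ge6$. So for $p=3$, or in general, I would instead construct an explicit mixed brick on a module with multiplicities. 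My first choice is to use the fact that $\mathcal{P}_{p,q}$ contains $\mathcal{P}_{3,6}$ or $\mathcal{P}_{4,4}$ as a convex subposet, so it is enough to treat those minimal cases. For each of them, I would pick the real radical vector of the extended Dynkin star $\widetilde E_7$ or $\widetilde E_8$ type. The idea is that mixed bricks in this setting are representations of the star quiver, and the null root of $\widetilde E_8=T_{2,3,6}$ or $\widetilde E_7=T_{2,4,4}$ gives a one-parameter family of bricks. The restriction of a generic regular simple at the null root to $B$, with $W$ the vertex-$0$ space, yields a mixed brick whose pair $(\mathbf{m},\ell)$ must be strict by Theorem~\ref{thm::critical-pair-one-orbit}, because infinitely many nonisomorphic bricks share it. Concretely, I would realize $\Lambda$ restricted to the vertices $a-p+1,\dots,b$ together with $0$ as a quotient of a tame hereditary star algebra of type $\widetilde E_7$ or $\widetilde E_8$, where the relation kills the arm beyond $b$, and check that the homogeneous family of bricks at the null root avoids that arm.

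\textbf{Main obstacle.} The hard step is showing that these bricks actually are $\Lambda$-modules, i.e.\ that they are supported on the part of the quiver where the relation is harmless. The alternative is to verify directly the scalar-stabilizer condition of Proposition~\ref{prop::mixed-brick-criterion} for a generic $L$ in the $(3,6)$ and $(4,4)$ cases, which amounts to a linear-algebra computation on an explicit $N_{\mathbf{m}}$.
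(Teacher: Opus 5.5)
Your finite direction is correct, but it follows a different route from the paper. The paper argues as follows. By Corollary~\ref{cor::mixed-brick-summand} and the interval Hom formula, the $B$-part of every mixed brick is supported on the vertices $\le b$. So every mixed brick is a module over $\Pi_{a,b}=\Lambda/\Lambda(1-e_{\le b})\Lambda$, the path algebra of type $T_{2,a,b-a+1}$. This algebra is representation-finite under \eqref{Dynkin-class} by Gabriel's theorem.

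You stay inside the numerical framework instead. The bound $r(\mathbf{x})\ge\frac12(\frac1p+\frac1q)h(\mathbf{x})^2$ holds for every $p\times q$ rectangle, by the same row/column count as in Subsection~\ref{subsec-example}. It gives $Q>0$ on $\mathbb{R}_{\ge0}^{pq}\setminus\{0\}$, so Theorem~\ref{thm:strict-pairs} and Proposition~\ref{prop::mixed-bricks-finite} apply. This avoids the classification of representation-finite quivers and proves slightly more: no strict admissible pair exists even numerically. The paper's argument is shorter.

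The converse direction, however, is not established. You leave it as an open ``main obstacle'', and that obstacle comes from two misreadings of $\Lambda$.

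\emph{The arrow target.} The arrows out of $0$ are given by $\top E=\top M[a,b]=S_a$. So the new arrow is $0\to a$, not $0\to b$. The only relation of $\Lambda$ is that the path $0\to a\to\cdots\to b\to b+1$ vanishes (when $b<n$).

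\emph{The direction of the quotient.} The restriction of $\Lambda$ to the vertices $0,1,\ldots,b$ is not a proper quotient of a hereditary star algebra by a relation. It is itself a quotient \emph{of} $\Lambda$, namely $\Pi_{a,b}$. It has no relations at all, because the only relation of $\Lambda$ passes through the vertex $b+1$, which has been killed. Every $\Pi_{a,b}$-module is a $\Lambda$-module by inflation, with the same endomorphism ring. Hence $\Pi_{a,b}$-bricks are $\Lambda$-bricks, and there is nothing to check about supports.

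With this, the proof is immediate. When \eqref{Dynkin-class} fails, $\Pi_{a,b}$ is hereditary of non-Dynkin type $T_{2,a,b-a+1}$, hence representation-infinite and therefore brick-infinite, and so is $\Lambda$. You do not need to produce a realized strict admissible pair, nor invoke Theorem~\ref{thm::critical-pair-one-orbit} and Proposition~\ref{prop:strict-infinite-bricks}. That longer route through the $\widetilde{\mathbb{E}}_7$/$\widetilde{\mathbb{E}}_8$ homogeneous tubes is what the paper uses afterwards to verify Conjecture~\ref{conj:critical-to-strict}, not to prove this theorem.

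Your semibrick argument via Corollary~\ref{cor:visible-obstructions}(2) is valid when $\min(p,q)\ge4$. It cannot reach $(3,q)$ with $q\ge6$, or $(p,3)$ with $p\ge6$. So the hereditary-quotient observation is the missing ingredient.
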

\begin{proof}
Let $e_{\leq b}:=e_0+e_1+\cdots+e_b$ and set $\Pi_{a,b}:=\Lambda/\Lambda(1-e_{\leq b})\Lambda$. Then, $\Pi_{a,b}$ is the path algebra of the acyclic quiver obtained from $1\to2\to\cdots\to b$ by adjoining the arrow $0\to a$. Its underlying graph is the three-armed tree $T_{2,a,b-a+1}$. By the well-known classification of three-armed trees, $T_{2,a,b-a+1}$ is Dynkin if and only if \eqref{Dynkin-class} holds. 

Suppose first that \eqref{Dynkin-class} does not hold. Then, $\Pi_{a,b}$ is a hereditary algebra of non-Dynkin type and hence brick-infinite, see, for example, \cite[Remark 2.9]{Mousavand-biserial}. Since bricks over a quotient algebra give rise to bricks over $\Lambda$, it follows that $\Lambda$ is brick-infinite.

Suppose that \eqref{Dynkin-class} holds. Then, $\Pi_{a,b}$ is a representation-finite hereditary algebra. Let $M=(N,W,\eta)$ be a mixed brick over $\Lambda$. By Corollary~\ref{cor::mixed-brick-summand} and  the interval Hom formula, $M$ is actually a $\Pi_{a,b}$-module. Therefore, $\Lambda$ has only finitely many mixed bricks, and moreover, $\Lambda$ is brick-finite by Theorem \ref{thm:mixed-finite}.
\end{proof}

Since every brick-infinite one-interval extension admits an affine hereditary quotient algebra, we may carry infinitely many bricks of one fixed dimension vector. Then, the preceding classification allows us to verify Conjecture~\ref{conj:critical-to-strict} for this case.

\begin{corollary}
Let $B=\Bbbk Q_n$ and $E=M[a,b]$ for $1\leq a\leq b\leq n$. Then, Conjecture~\ref{conj:critical-to-strict} holds for $\Lambda=B[E]$.
\end{corollary}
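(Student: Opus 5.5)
Conjecture~\ref{conj:critical-to-strict} for $\Lambda=B[E]$ with $B=\Bbbk Q_n$ and $E=M[a,b]$ has two hypotheses. First, $B$ is representation-finite, hence $E$-visible finite and brick-finite, so they hold automatically. It therefore suffices to show: if $\Lambda$ is brick-infinite, then some strict admissible pair is realized. By Theorem~\ref{thm:interval-classification}, brick-infiniteness means that \eqref{Dynkin-class} fails. Then the quotient $\Pi_{a,b}=\Lambda/\Lambda(1-e_{\leq b})\Lambda$ is a path algebra of a non-Dynkin three-armed tree $T_{2,a,b-a+1}$.

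The plan is to produce infinitely many mixed bricks of one fixed dimension vector, and then invert Proposition~\ref{prop:strict-second-bbt}.

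\emph{Step 1: a one-parameter family.} A non-Dynkin tree contains a Euclidean subgraph; concretely $T_{2,a,b-a+1}$ contains one of $\widetilde{E}_6=T_{3,3,3}$, $\widetilde{E}_7=T_{2,4,4}$, $\widetilde{E}_8=T_{2,3,6}$, and none of these (nor $T_{2,2,k}$) is excluded by the failure of \eqref{Dynkin-class}. Restricting to this full subquiver gives a further quotient algebra that is a tame hereditary algebra of Euclidean type. Its homogeneous tubes give infinitely many pairwise nonisomorphic bricks of dimension vector equal to the null root $\delta$. Each such brick is a brick over $\Lambda$. The null root has nonzero entry at every vertex of the Euclidean subquiver, and the vertex $0$ lies on it (it is the end of the short arm). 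So these are mixed bricks $(N_\lambda,W_\lambda,\eta_\lambda)$ with $\dim W_\lambda=\delta_0=\ell$ and $\underline{\dim}_B N_\lambda$ fixed.

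\emph{Step 2: pigeonhole to a single pair $(\mathbf{m},\ell)$.} Since $B$ is representation-finite, there are only finitely many isomorphism classes of $B$-modules of a fixed dimension vector. Hence infinitely many $N_\lambda$ are isomorphic to a single $N_{\mathbf{m}}$ (Corollary~\ref{cor::mixed-brick-summand} ensures it has the form $N_{\mathbf{m}}$). These give infinitely many isomorphism classes of mixed bricks realizing the same pair $(\mathbf{m},\ell)$. The pair is admissible by Theorem~\ref{thm:dim-conditions}. By Theorem~\ref{thm::critical-pair-one-orbit} it cannot be critical, since a critical pair carries at most one class. Hence $(\mathbf{m},\ell)$ is a realized strict admissible pair.

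\emph{Main obstacle.} The only real point to check is Step 1. One must verify that every non-Dynkin $T_{2,a,b-a+1}$ really contains a Euclidean subtree through vertex $0$, or else, if the subtree avoids $0$, pick the family differently. The fallback is to apply the same pigeonhole argument to the $\delta$-family of the whole (possibly wild) hereditary quotient $\Pi_{a,b}$. By Kac's theorem, any wild non-Dynkin connected quiver has an imaginary Schur root with full support, including vertex $0$. This root yields infinitely many bricks of one dimension vector, and therefore again mixed bricks. Step 2 is then purely formal.
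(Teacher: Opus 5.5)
Your proposal is correct and follows essentially the paper's proof. Like the paper, you pass to a Euclidean quotient $T_{2,p,q}$ with $(p,q)\in\{(3,6),(4,4),(6,3)\}$, take the homogeneous-tube bricks (sincere of dimension vector $\delta$, hence mixed), pigeonhole the $B$-parts using representation-finiteness of $B$, and rule out criticality by Theorem~\ref{thm::critical-pair-one-orbit}. Your flagged obstacle is in fact immediate: any Euclidean subtree of $T_{2,a,b-a+1}$ must be centred at the unique branch vertex $a$ and use all three arms, so it contains $0$ and is $\widetilde{\mathbb{E}}_7$ or $\widetilde{\mathbb{E}}_8$ (never $\widetilde{\mathbb{E}}_6$), which makes the Kac-theorem fallback unnecessary.
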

\begin{proof}
Suppose that $\Lambda$ is brick-infinite. Set $c:=b-a+1$. By Theorem~\ref{thm:interval-classification}, \eqref{Dynkin-class} does not hold and then $a,c\geq3$. Moreover, if $a=3$, then $c\geq6$; if $c=3$, then $a\geq6$; otherwise $a,c\geq4$. We may choose $p\leq a$ and $q\leq c$ such
that
\[
(p,q)\in\{(3,6),(4,4),(6,3)\}.
\]
Let $\Gamma$ be the quotient of $\Lambda$ obtained by deleting all vertices except the new vertex $0$ and the old vertices $a-p+1,a-p+2,\ldots,a+q-1$. Then, $\Gamma$ is the path algebra of an acyclic quiver whose graph is $T_{2,p,q}$, which is a hereditary algebra of affine type $\widetilde{\mathbb{E}}_7$ or $\widetilde{\mathbb{E}}_8$. The structure theory of regular $\Gamma$-modules implies that $\Gamma$ has infinitely many homogeneous tubes in the Auslander-Reiten quiver of $\Gamma$. Choosing one simple regular module from each homogeneous tube gives infinitely many pairwise non-isomorphic bricks with the same dimension vector; see, for example, \cite[XIII.2]{SS-II} or \cite[Chapter~3]{RingelTame}. These bricks must have nonzero components both at the extension vertex and at the old vertices: otherwise they would form an infinite family of bricks over a Dynkin quiver of type $\mathbb{A}$, or would all be supported at the single vertex $0$. Hence, all of them are mixed. Their inflations along $\Lambda\twoheadrightarrow\Gamma$ give infinitely many pairwise nonisomorphic mixed $\Lambda$-bricks with the same dimension vector.

Since $B$ is representation-finite, only finitely many isomorphism classes can occur among the $B$-module parts of these mixed bricks. After passing to an infinite subfamily, all their $B$-module parts are isomorphic to some fixed module $N_{\mathbf{m}}$. Their dimensions at the extension vertex also have a common value $\ell$. Thus, $(\mathbf{m},\ell)$ is realized by infinitely many pairwise nonisomorphic mixed bricks. It cannot be critical by Theorem~\ref{thm::critical-pair-one-orbit}. Hence, $(\mathbf{m},\ell)$ is a realized strict admissible pair, proving
Conjecture~\ref{conj:critical-to-strict}.
\end{proof}

This provides a first verification of Conjecture~\ref{conj:critical-to-strict} for one-point extensions of path algebras of type $\mathbb{A}$. We expect the conjecture to hold more generally for arbitrary extension modules over path algebras of type $\mathbb{A}$; a systematic treatment of that problem will be developed separately.

\section{Applications}\label{Section-5}
\subsection{Projective extension modules}
We now consider the special case where the extension module is projective. Let $P$ be a projective right $B$-module and $\Lambda=B[P]$. 

One-point extensions by projective modules have previously been studied
from the viewpoint of $\tau$-tilting theory. In \cite{Suarez-projective-one-point}, the author proved that the extension and restriction functors transfer support $\tau$-tilting modules between $B$ and $\Lambda$, and that the extension functor induces a full embedding between the corresponding support $\tau$-tilting quivers. Our purpose here is not to extend these functorial results. Instead, we explain how the mixed brick framework developed in this paper goes beyond the maximal framing construction.

With our convention for right modules, we define the restriction functor $\mathcal{R}\colon\mod \Lambda\rightarrow\mod B$ by $\mathcal{R}(N,W,\eta):=N$ and the extension functor $\mathcal{E}\colon\mod B\rightarrow\mod \Lambda$ by 
\[
\mathcal{E}(N) := \bigl(
N,\Hom_B(P,N),\operatorname{id}_{\Hom_B(P,N)} \bigr). 
\]
On morphisms, we set $\mathcal{R}(f,g):=f$ and $\mathcal{E}(f):=(f,f_*)$. Thus, $\mathcal{E}(N)$ is obtained by equipping $N$ with the maximal framing space $\Hom_B(P,N)$. The following observation explains the behavior of mixed bricks under this construction.

\begin{proposition}\label{prop:max-framing}
Let $P$ be a projective right $B$-module and $\Lambda=B[P]$.
Then, the following statements hold.
\begin{enumerate}
\item The functor $\mathcal{E}$ is an exact and fully faithful right
adjoint to $\mathcal{R}$.

\item For all $N,N'\in\mod B$, there is a natural isomorphism $\Hom_\Lambda\bigl(\mathcal{E}(N),\mathcal{E}(N')\bigr) \cong \Hom_B(N,N')$. In particular, $\mathcal{E}(N)$ is a brick if and only if $N$ is a brick.
\end{enumerate}
\end{proposition}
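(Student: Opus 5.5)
We will prove the adjunction directly from the description of morphisms between triples, and then derive everything else from it. For $M=(N',W,\eta)\in\mod\Lambda$ and $N\in\mod B$, we exhibit a natural bijection
\[
\Hom_\Lambda\bigl(M,\mathcal{E}(N)\bigr)\cong\Hom_B\bigl(\mathcal{R}(M),N\bigr)=\Hom_B(N',N).
\]
A morphism $M\to\mathcal{E}(N)$ is a pair $(f,g)$ with $f\colon N'\to N$ and $g\colon W\to\Hom_B(P,N)$, subject to $f_*\circ\eta=\operatorname{id}\circ g$. This condition says exactly that $g=f_*\circ\eta$, so $g$ is determined by $f$. Conversely, every $f\in\Hom_B(N',N)$ yields the morphism $(f,f_*\circ\eta)$. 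Hence $(f,g)\mapsto f$ is a bijection. Naturality in $M$ and in $N$ is a routine check using $(f\circ u)_*=f_*\circ u_*$. This shows that $\mathcal{E}$ is right adjoint to $\mathcal{R}$.

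Full faithfulness and part (2) follow by applying the bijection with $M=\mathcal{E}(N')$. Since $\mathcal{R}\mathcal{E}(N')=N'$, we get $\Hom_\Lambda(\mathcal{E}(N'),\mathcal{E}(N))\cong\Hom_B(N',N)$, and under this identification $f\mapsto(f,f_*)$ is exactly $\mathcal{E}$ on morphisms. One can also see this directly: a morphism $(f,g)\colon\mathcal{E}(N')\to\mathcal{E}(N)$ must satisfy $g=f_*$. This isomorphism is compatible with composition, so $\End_\Lambda(\mathcal{E}(N))\cong\End_B(N)$ as algebras. In particular $\mathcal{E}(N)$ is a brick exactly when $N$ is. The case $N=0$ needs no separate treatment, because then $\mathcal{E}(0)=0$, and neither $0$ nor $\mathcal{E}(0)$ is a brick.

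Exactness is the only step that uses projectivity of $P$. Take a short exact sequence $0\to N_1\to N_2\to N_3\to0$ in $\mod B$. Its image under $\mathcal{E}$ has $B$-components equal to the original sequence and vector space components $0\to\Hom_B(P,N_1)\to\Hom_B(P,N_2)\to\Hom_B(P,N_3)\to0$. The latter is exact because $\Hom_B(P,-)$ is exact. A sequence of $\Lambda$-modules is exact if and only if it is exact after applying $-e_B$ and $-e_0$, that is, componentwise. This is the main point that needs care. The compatibility maps $\eta$ are identities, so the triples form a genuine sequence of $\Lambda$-morphisms, and componentwise exactness then gives exactness in $\mod\Lambda$.

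I expect no real obstacle. The only subtlety is the direction of the adjunction: $\mathcal{E}$ is the right adjoint because the framing space is maximal, so any map into it is forced.
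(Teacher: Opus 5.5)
Your proposal is correct and follows essentially the same route as the paper. In both, the compatibility condition $f_*\circ\eta=g$ forces $g$ to be determined by $f$, which gives the adjunction $\Hom_\Lambda(\mathcal{E}(N'),\mathcal{E}(N))$-style bijection $\Hom_\Lambda(M,\mathcal{E}(N))\cong\Hom_B(\mathcal{R}(M),N)$; full faithfulness then follows by taking $M=\mathcal{E}(N')$, and exactness comes from exactness of $\Hom_B(P,-)$. Your extra remarks (componentwise exactness, compatibility with composition giving an algebra isomorphism of endomorphism rings, and the case $N=0$) fill in details the paper leaves implicit.
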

\begin{proof}
Let $M=(N_0,W,\eta)\in\mod \Lambda$ and $N\in\mod B$. A morphism
\[
(f,g)\colon
(N_0,W,\eta)
\longrightarrow
\bigl(N,\Hom_B(P,N),\operatorname{id}_{\Hom_B(P,N)}\bigr)
\]
satisfies $f_*\circ\eta=g$. Hence, $g$ is uniquely determined by the $B$-module homomorphism $f\colon N_0\to N$. This gives a natural isomorphism $\Hom_\Lambda\bigl(M,\mathcal{E}(N)\bigr) \cong \Hom_B\bigl(\mathcal{R}(M),N\bigr)$, and therefore $(\mathcal{R},\mathcal{E})$ is an adjoint pair. Taking $M=\mathcal{E}(N_0)$ shows that $\mathcal{E}$ is fully faithful. Moreover, $\mathcal{E}$ is exact because $P$ is projective and hence $\Hom_B(P,-)$ is exact. The remaining assertions follow immediately.
\end{proof}

Proposition~\ref{prop:max-framing} shows that maximal framing preserves the entire endomorphism algebra of $N$ and therefore cannot turn a non-brick $B$-module into a mixed brick in $\mod \Lambda$. However, our mixed brick framework allows arbitrary nonzero partial framings $0\neq L\subseteq\Hom_B(P,N)$. A suitable partial framing may eliminate the non-scalar endomorphisms of $N$, even when maximal framing does not produce a brick. We give an example as follows.

\begin{example}
Let $B=\Bbbk$, $P=\Bbbk^3$ and $\Lambda=B[P]$. Then, $\Lambda$ is the path algebra of the 3-arrow Kronecker quiver, with the extension vertex as a source. Let $N=\Bbbk^2$. Since $\End_B(N)\cong\operatorname{Mat}_{2\times2}(\Bbbk)$, the module $N$ is not a brick, and hence its maximal framing $\mathcal{E}(N)$ is not a mixed brick by Proposition~\ref{prop:max-framing}. 

Choose bases $\{p_1,p_2,p_3\}$ of $P$ and $\{v_1,v_2\}$ of $N$. Define
$\epsilon_1,\epsilon_2\in\Hom_{\Bbbk}(P,N)$ by $\epsilon_1(p_1)=\epsilon_1(p_2)=0$, $\epsilon_1(p_3)=v_2$ and $\epsilon_2(p_1)=v_2$, $\epsilon_2(p_2)=v_1$, $\epsilon_2(p_3)=0$. Set $L:=\Bbbk\epsilon_1+\Bbbk\epsilon_2$. Let $f\in\End_B(N)$ be given by
$f(v_1)=av_1+cv_2$ and $f(v_2)=bv_1+dv_2$, and suppose that
$f_*(L)\subseteq L$. Since $f_*(\epsilon_1)(p_1)=f_*(\epsilon_1)(p_2)=0$ and $f_*(\epsilon_1)(p_3)=bv_1+dv_2$, the condition $f_*(\epsilon_1)\in L$ implies $b=0$. Moreover, $f_*(\epsilon_2)\in L$ implies, by evaluating at $p_1$ and $p_2$, that $d=a$ and $c=0$. Thus, $f=a\operatorname{id}_{N}$, proving that $\Stab_{\End_B(N)}(L)=\Bbbk\operatorname{id}_{N}$. Hence, Proposition~\ref{prop::mixed-brick-criterion} shows that $(N,L,\iota_L)$ is a mixed brick.

We note that the unique indecomposable $B$-module is $X_1=\Bbbk$. Then,  $\mathbf{m}=(2)$, $h(\mathbf{m})=6$, $r(\mathbf{m})=4$, $\ell=2$. It turns out that $(\mathbf{m},2)$ is a realized strict admissible pair. Consequently, $\Lambda$ admits infinitely many pairwise nonisomorphic mixed bricks with dimension vector $(2,2)$. By contrast, the maximal framing corresponds to $\ell=h(\mathbf{m})=6$, which gives a non-admissible pair.
\end{example}

\subsection{Simple extension modules}
We consider one-point extensions by simple modules. In \cite{Gao-simple-one-point}, the author studied $\tau$-tilting modules over one-point extensions by a simple module corresponding to a source vertex, obtaining formulas for the numbers of (support) $\tau$-tilting modules. Here, we translate the simple-extension setting into our mixed brick framework.

Let $S$ be a simple right $B$-module and $\Lambda=B[S]$. For every $N\in\mod B$, the vector space $\Hom_B(S,N)$ records the multiplicity space of $S$ in $\soc N$. A mixed brick over $\Lambda$ may be viewed as an $S$-socle-framed module $(N,L,\iota_L)$ with $0\neq L\subseteq\Hom_B(S,N)$, where $\iota_L$ is the natural inclusion. The situation is particularly restrictive when $S$ is injective, i.e., $S$ corresponds to a source vertex in the quiver of $B$.

\begin{proposition}\label{prop:injective-simple}
Let $S$ be a simple injective right $B$-module and let $\Lambda=B[S]$. Then, up to isomorphism, the only mixed brick in $\mod \Lambda$ is $P_0:=(S,\Bbbk,\eta_0),\;\eta_0(1)=\operatorname{id}_S$, the indecomposable projective $\Lambda$-module associated with the extension
vertex. 
\end{proposition}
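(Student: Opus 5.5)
The plan is to pin down the $B$-module part $N$ of any mixed brick $(N,W,\eta)$ over $\Lambda=B[S]$, and then apply Proposition~\ref{prop::mixed-brick-criterion}. By Corollary~\ref{cor::mixed-brick-summand}, every indecomposable summand $X$ of $N$ satisfies $\Hom_B(S,X)\neq0$. Since $S$ is simple, any nonzero map $S\to X$ is a monomorphism. Since $S$ is injective, this monomorphism splits. As $X$ is indecomposable, this forces $X\cong S$. Hence $N\cong S^{m}$ for some $m\geq1$, so in the notation of Section~\ref{Section-3} we have $t=1$, $X_1=S$ and $\mathbf{m}=(m)$. Moreover $d_1=\dim_{\Bbbk}\Hom_B(S,S)=1$ because $\Bbbk$ is algebraically closed.

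Next I compute the invariants: $h(\mathbf{m})=m$ and $r(\mathbf{m})=\dim\End_B(S^m)=m^2$. Theorem~\ref{thm:dim-conditions} then requires $m^2-1\leq\ell(m-\ell)\leq m^2/4$. This inequality forces $m=1$, and then $\ell=1$. Alternatively, without the dimension count: $\Hom_B(S,S^m)\cong\Bbbk^m$, and $\End_B(S^m)\cong\Mat_{m\times m}(\Bbbk)$ acts on it as the full matrix algebra. Any nonzero proper subspace $L$ is then stabilized by a non-scalar matrix, for instance a projection onto $L$. Also $L=\Hom_B(S,S^m)$ is stabilized by everything, which is non-scalar when $m\geq2$. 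Either way we get $N\cong S$, $\dim W=1$, and $\eta$ injective, so $\Im\eta=\Hom_B(S,S)=\Bbbk\operatorname{id}_S$.

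For existence and uniqueness, $\mathcal{L}((1),1)$ is the single point $\Hom_B(S,S)$, and $\End_B(S)=\Bbbk$ makes the stabilizer condition automatic. By Proposition~\ref{prop::mixed-bricks-orbits}, there is exactly one isomorphism class of mixed brick, namely $(S,\Bbbk,\eta_0)$ with $\eta_0(1)=\operatorname{id}_S$. Alternatively, this also follows from Proposition~\ref{prop:quotient-bricks} with $K=0$.

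It remains to identify $(S,\Bbbk,\eta_0)$ with $P_0=e_0\Lambda$. As a right $\Lambda$-module, $e_0\Lambda$ has $e_0\Lambda e_0=\Bbbk$ and $e_0\Lambda e_B=E=S$. The structure map $\Bbbk\to\Hom_B(S,S)$ sends $1$ to left multiplication in $E$, which is $\operatorname{id}_S$. I expect this bookkeeping with the triple conventions to be the only mildly delicate step. The main step is the splitting argument giving $N\cong S^m$.
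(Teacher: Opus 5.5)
Your proposal is correct and follows essentially the same route as the paper. You reduce to $N\cong S^{\oplus m}$ via Corollary~\ref{cor::mixed-brick-summand}, use $r(\mathbf{m})=m^2$, $h(\mathbf{m})=m$ and Theorem~\ref{thm:dim-conditions} to force $m=\ell=1$, and then get existence from Proposition~\ref{prop:quotient-bricks} with $K=0$ and uniqueness from $\dim_{\Bbbk}\Hom_B(S,S)=1$; your splitting argument showing that $S$ is the only $S$-visible indecomposable, and your check that $(S,\Bbbk,\eta_0)\cong e_0\Lambda$, fill in details the paper leaves implicit.
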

\begin{proof}
Since $S$ is an injective simple, $S$ is the unique $S$-visible indecomposable $B$-module. Let $M=(N,W,\eta)$ be a mixed brick. By Corollary~\ref{cor::mixed-brick-summand}, $N\cong S^{\oplus m}$ for some
$m\geq1$. Set $\ell:=\dim_{\Bbbk} W$. Then, $r(\mathbf{m})=m^2$, $h(\mathbf{m})=m$, and Theorem \ref{thm:dim-conditions} forces $m=1$ and $\ell=1$. Applying Proposition~\ref{prop:quotient-bricks} to the proper submodule $0\subsetneq S$, $P_0=(S,\Bbbk,\eta_0)$ is a mixed brick. Since $\dim_{\Bbbk}\Hom_B(S,S)=1$, every mixed brick with $B$-module part isomorphic to $S$ is isomorphic to $P_0$. 
\end{proof}

In the setting of Proposition~\ref{prop:injective-simple}, the only $B$-submodules of $S$ are $0$ and $S$, and the bricks in $\mod \Lambda$ consist precisely of the bricks inherited from $\mod B$, the simple module supported at the extension vertex, and the unique mixed brick $P_0$. Hence, whenever these sets are finite, $|\operatorname{brick}\Lambda| = |\operatorname{brick}B| + |\operatorname{Sub}_B(S)|$. This may be viewed as a brick counterpart to the explicit classification and counting results for $\tau$-tilting modules obtained in \cite{Gao-simple-one-point}.

The injective hypothesis in Proposition~\ref{prop:injective-simple} is essential: for an arbitrary simple extension module $S$, the $S$-visible region may contain several indecomposable modules. The following example gives a complete description in a basic hereditary case.

\begin{example}
Let $B=\Bbbk Q_n$ as in Subsection \ref{subsec-example} and take $S=M[a,a]$ with $1\leq a\leq n$. The $S$-visible indecomposable $B$-modules are precisely $M[c,a], 1\leq c\leq a$. Let $(\mathbf{m},\ell)$ be a realized admissible pair with $\mathbf{m}=(m_1,\ldots,m_a)\in \mathbb{Z}_{\geq 0}^a$. Then, $h(\mathbf{m})=m_1+\cdots +m_a$ and, if $h(\mathbf{m})\geq2$, then
\[
r(\mathbf{m})=\sum_{c\geq d}m_cm_d
\geq
\frac{h(\mathbf{m})^2+h(\mathbf{m})}{2}
>
\left\lfloor\frac{h(\mathbf{m})^2}{4}\right\rfloor+1,
\]
a contradiction. Hence,
$h(\mathbf{m})=1$, and this forces $r(\mathbf{m})=\ell=1$. One finds that $(\mathbf{m},1)$ is critical. Since the unique nonzero subspace of $\Hom_B(S,M[c,a])$ has scalar stabilizer, $(\mathbf{m},1)$ is realized by $\bigl(M[c,a],\Bbbk,\eta_c\bigr)$ with $0\neq \eta_c\colon \Bbbk\rightarrow\Hom_B(S,M[c,a])$, for each $1\leq c\leq a$. It follows from Theorem \ref{thm::critical-pair-one-orbit} that $\Lambda=B[S]$ has exactly $a$ isomorphism classes of mixed bricks. Using Theorem \ref{thm:interval-classification}, $\Lambda$ is brick-finite and $|\operatorname{brick}\Lambda| = |\operatorname{brick}B| + a+1$.
\end{example}

\subsection{Semibricks and epibricks}
In \cite{GaoXie}, the authors used semibricks to construct support $\tau$-tilting modules over one-point extensions $\Lambda=B[E]$. We explain how their construction appears in our framework and contrast it with the mixed brick part of $\mod \Lambda$.

Let $S_0:=(0,\Bbbk,0)$ be the simple module supported at the extension vertex. For $X\in\mod B$, write $\overline X:=(X,0,0)$, and $\overline{\mathcal{S}}:=\{\overline X\mid X\in\mathcal{S}\}$ for a collection $\mathcal{S}$ of $B$-modules.

\begin{proposition}\label{prop:boundary-semibricks}
Let $\mathcal{S}$ be a semibrick in $\mod B$. Then, both $\overline{\mathcal{S}}$ and $\overline{\mathcal{S}}\cup\{S_0\}$ are semibricks in $\mod \Lambda$. Moreover, no semibrick in $\mod \Lambda$ contains both $S_0$ and a mixed brick.
\end{proposition}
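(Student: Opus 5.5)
The plan is to compute Hom-spaces between the three kinds of modules directly from the description of morphisms in $\mod\Lambda$: a morphism $(N,W,\eta)\to(N',W',\eta')$ is a pair $(f,g)$ with $f\in\Hom_B(N,N')$, $g\in\Hom_\Bbbk(W,W')$ and $f_*\circ\eta=\eta'\circ g$. All three assertions will follow from this single description.

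\textbf{Bricks.} Each $\overline X=(X,0,0)$ with $X\in\mathcal{S}$ is a brick in $\mod\Lambda$, since its endomorphisms are exactly the pairs $(f,0)$ with $f\in\End_B(X)=\Bbbk\operatorname{id}_X$. The module $S_0$ is a brick because $\End_\Lambda(S_0)\cong\Bbbk$.

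\textbf{Hom-orthogonality of $\overline{\mathcal{S}}$.} For distinct $X,Y\in\mathcal{S}$, a morphism $\overline X\to\overline Y$ is $(f,0)$ with $f\in\Hom_B(X,Y)=0$. Hence $\Hom_\Lambda(\overline X,\overline Y)=0$. Pairwise nonisomorphism of the $\overline X$ is inherited from $\mathcal{S}$, so $\overline{\mathcal{S}}$ is a semibrick.

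\textbf{Adding $S_0$.} A morphism $\overline X\to S_0$ has the form $(0,g)$ with $g\colon 0\to\Bbbk$, so it is zero. A morphism $S_0\to\overline X$ has the form $(0,g)$ with $g\colon\Bbbk\to 0$, so it is also zero. Moreover $S_0\not\cong\overline X$, since $X\neq0$. Therefore $\overline{\mathcal{S}}\cup\{S_0\}$ is a semibrick.

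\textbf{No semibrick contains $S_0$ and a mixed brick.} Let $M=(N,W,\eta)$ be a mixed brick. By Lemma~\ref{lem:eta-injective}, $\eta$ is injective, and $W\neq0$. Pick any nonzero $w\in W$ and define $g\colon W\to\Bbbk$ to be a linear functional with $g(w)=1$. Then the pair $(0,g)$ is a morphism $M\to S_0$: the compatibility condition $0_*\circ\eta=0\circ g$ holds trivially, because the $B$-part of $S_0$ is zero and its structure map is zero. Since $g\neq0$, this morphism is nonzero, so $\Hom_\Lambda(M,S_0)\neq0$. Hence $M$ and $S_0$ can never both belong to a semibrick.

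The only point needing care is the direction of the compatibility condition in that last step. I expect it to be harmless: in $(0,g)\colon M\to S_0$ both sides of $f_*\circ\eta=\eta'\circ g$ vanish, because $\eta'=0$ for $S_0$. Equivalently, $S_0$ is a quotient of any module with $W\neq0$, since $Me_0\Lambda e_B\subseteq Me_B$ is a submodule with quotient supported at vertex $0$. Injectivity of $\eta$ is not even needed for this argument.
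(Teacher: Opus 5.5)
Your proposal is correct and follows essentially the same route as the paper. Both arguments read off the relevant Hom-spaces from the compatibility condition $f_*\circ\eta=\eta'\circ g$, and the last assertion rests on $\Hom_\Lambda(M,S_0)\cong\Hom_{\Bbbk}(W,\Bbbk)\neq0$; the paper also checks $\Hom_\Lambda(S_0,M)=0$ using injectivity of $\eta$, which, as you correctly observe, is not needed for the conclusion.
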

\begin{proof}
For $X,Y\in\mod B$, the compatibility condition for morphisms of triple notations gives $\Hom_\Lambda(\overline X,\overline Y) \cong \Hom_B(X,Y)$ and $\Hom_\Lambda(S_0,\overline X) = \Hom_\Lambda(\overline X,S_0)=0$. Hence, $\overline{\mathcal{S}}$ and $\overline{\mathcal{S}}\cup\{S_0\}$ are semibricks in $\mod \Lambda$.

Let $M=(N,W,\eta)$ be a mixed brick. Since $\eta$ is injective, a morphism $(f,g)\colon S_0\rightarrow M$ satisfies $\eta \circ g=0$. Hence $g=0$, and $\Hom_\Lambda(S_0,M)=0$. On the other hand, a morphism $(f,g)\colon M\rightarrow S_0$ is determined by an arbitrary linear map $g\colon W\rightarrow\Bbbk$, since its $B$-module component is necessarily zero and the compatibility condition is automatic. Thus, $\Hom_\Lambda(M,S_0) \cong \Hom_{\Bbbk}(W,\Bbbk)$. Since $M$ is mixed, one has $W\neq0$, and hence $\Hom_\Lambda(M,S_0)\neq0$. Therefore, $S_0$ and $M$ cannot occur in the same semibrick.
\end{proof}

The first assertion of Proposition~\ref{prop:boundary-semibricks} recovers, in the triple notation, the semibrick construction in \cite[Proposition~1.1]{GaoXie}. More precisely, they
proved that if $\mathcal{S}$ is a left-finite semibrick in $\mod B$, then $\overline{\mathcal{S}}\cup\{S_0\}$ is a left-finite semibrick in $\mod \Lambda$ (\cite[Proposition~3.4]{GaoXie}), and then produced support $\tau$-tilting $\Lambda$-modules associated with both $\overline{\mathcal{S}}$ and $\overline{\mathcal{S}}\cup\{S_0\}$. Thus, their construction uses only the boundary cases $W=0$ and $N=0$.

Let $M=(N,W,\eta)$ and $M'=(N',W',\eta')$ be mixed bricks. Since $\eta$ and $\eta'$ are injective, a $B$-module homomorphism
$f\colon N\to N'$ extends uniquely to a morphism $(f,g)\colon M\to M'$ if and only if $f_*(\operatorname{Im}\eta) \subseteq \operatorname{Im}\eta'$. Consequently,
\[
\Hom_\Lambda(M,M')
\cong
\left\{
f\in\Hom_B(N,N')
\;\middle|\;
f_*(\operatorname{Im}\eta)
\subseteq
\operatorname{Im}\eta'
\right\}.
\]
Hence, a collection of mixed bricks is a semibrick precisely when this space vanishes for every ordered pair of distinct members. This
additional Hom-orthogonality condition has no counterpart in the
boundary construction. 

\begin{example}
Let $B=\Bbbk Q_2$ as in Subsection \ref{subsec-example} and set $E=M[1,2]$, $S_i=M[i,i]$ for $i=1,2$. Then, $\Lambda=B[E]$ is the path algebra of $0\rightarrow1\rightarrow2$. One finds that $\mathcal{S}=\{S_2\}$ is a semibrick in $\mod B$, and then $\{\overline{S_2}\}$, $\{\overline{S_2},S_0\}$ are the boundary
semibricks in $\mod \Lambda$, mentioned in Proposition~\ref{prop:boundary-semibricks}.

By Proposition~\ref{prop:quotient-bricks}, the module $M_{S_2}:=(S_1,\Bbbk,\eta_{S_2})$ is a mixed brick. Moreover, $\Hom_\Lambda(\overline{S_2},M_{S_2}) \cong \Hom_B(S_2,S_1) = 0$, whereas
\[
\Hom_\Lambda(M_{S_2},\overline{S_2})
\cong
\{f\in\Hom_B(S_1,S_2)\mid f \circ q_{S_2}=0\}
=
0.
\]
It follows that $\{\overline{S_2},M_{S_2}\}$ is a semibrick in $\mod \Lambda$. 
\end{example}

Recall that a collection of bricks is called an \emph{epibrick} if every morphism between two bricks is either zero or an epimorphism. In \cite[Theorem~3.5]{LiGaoEpibricks}, the authors proved that if $\mathcal{E}$ is an epibrick in $\mod B$, then $\overline{\mathcal{E}} :=\{\overline X\mid X\in\mathcal{E}\}$ and $\overline{\mathcal{E}}\cup\{S_0\}$ are epibricks in $\mod \Lambda$. Similar to the semibrick construction discussed above, these are boundary epibricks consisting only of old bricks and, possibly, the simple module $S_0$ at the extension vertex.

Our canonical quotient construction in Proposition~\ref{prop:quotient-bricks} produces a different class of epibricks lying entirely in the mixed setting. Recall that $M_K=(E/K,\Bbbk,\eta_K)$ and $q_K\colon E\twoheadrightarrow E/K$ is the canonical epimorphism. Let $K$ and $K'$ be proper $B$-submodules of $E$. A morphism $(f,\lambda):M_K\rightarrow M_{K'}$ satisfies $f \circ q_K=\lambda q_{K'}$. If $\lambda=0$, then $f=0$ because $q_K$ is surjective. If
$\lambda\neq0$, $(f,\lambda)$ exists if and only if $K\subseteq K'$; this inclusion is precisely the condition that $q_{K'}$
factor through $q_K$. Writing $q_{K,K'}:E/K\rightarrow E/K'$ for the induced canonical epimorphism, every morphism $(f,\lambda)$ is of the form $(\lambda q_{K,K'},\lambda)$. Consequently,
\[
\Hom_\Lambda(M_K,M_{K'})
\cong
\begin{cases}
\Bbbk, & K\subseteq K',\\
0, & K\not\subseteq K'.
\end{cases}
\]
Every nonzero morphism occurring in the first case is an epimorphism,
since both $q_{K,K'}$ and multiplication by $\lambda$ are surjective. It follows that, for every collection $\mathcal{K}$ of proper
$B$-submodules of $E$, the collection $\mathcal{E}_{\mathcal{K}} := \{M_K\mid K\in\mathcal{K}\}$ is an epibrick in $\mod \Lambda$. 

A systematic study of mixed semibricks and epibricks, including their
morphism-theoretic structure and their parameter spaces in products
of Grassmannians, lies beyond the scope of the present paper.

\subsection{An almost gentle example}
We conclude with an example showing that our mixed brick framework can determine brick-finiteness outside the gentle and special biserial settings. Let $B=\Bbbk Q$, where $Q$ is the Dynkin quiver of type
$\mathbb{D}_4$ given by
\[
Q:\quad \vcenter{\xymatrix@C=1cm@R=0.7cm{&4\ar[d]&\\1\ar[r]^{\alpha_1}&3&2\ar[l]_{\alpha_2}}}
\]
and let $E=S_1\oplus S_2$. Since $B$ is representation-finite, it is brick-finite and $E$-visible finite. Then, the one-point extension $\Lambda=B[E]$ is obtained by adjoining a new vertex $0$ and two arrows $\rho_1\colon 0\rightarrow1$,  $\rho_2\colon 0\rightarrow2$ subject to the relations $\rho_1\alpha_1=\rho_2\alpha_2=0$. One may check that $\Lambda$ is an almost gentle algebra in the sense of \cite{GS-almost-gentle}. It is not special biserial, since three arrows end at the vertex $3$. In particular, the available brick-finiteness
criteria for gentle (\cite{Plamondon-gentle}) and special biserial (\cite{Schroll-Treffinger-Valdivieso-band}) algebras do not apply directly to this example.

We first claim that the $E$-visible indecomposable $B$-modules are precisely $S_1$ and $S_2$, since they both are injective simple modules. Using Corollary~\ref{cor::mixed-brick-summand}, the $B$-module part of every mixed brick is of the form
\[
N_{\mathbf{m}} = S_1^{\oplus m_1}\oplus S_2^{\oplus m_2},
\qquad
\mathbf{m}=(m_1,m_2)\in\mathbb{Z}_{\geq 0}^2.
\]
Since $S_1$ and $S_2$ are Hom-orthogonal, we have $\Hom_B(E,N_{\mathbf{m}}) \cong \Bbbk^{m_1}\oplus\Bbbk^{m_2}$ and $\End_B(N_{\mathbf{m}}) \cong \Mat_{m_1\times m_1}(\Bbbk)\oplus \Mat_{m_2\times m_2}(\Bbbk)$. Therefore, $h(\mathbf{m})=m_1+m_2$ and $r(\mathbf{m})=m_1^2+m_2^2$. 

We next show that no strict admissible pair exists. Note that
\[
r(\mathbf{m})
=
m_1^2+m_2^2
=
\frac{h(\mathbf{m})^2+(m_1-m_2)^2}{2}
\geq
\frac{h(\mathbf{m})^2}{2}.
\]
If $(\mathbf{m},\ell)$ is admissible and $h(\mathbf{m})\geq3$, then
\[
r(\mathbf{m})-1
\geq
\frac{h(\mathbf{m})^2}{2}-1
>
\frac{h(\mathbf{m})^2}{4},
\]
contradicting admissibility. Hence, every admissible pair satisfies $h(\mathbf{m})\leq2$.
\begin{itemize}
\item Suppose $h(\mathbf{m})=1$. Then, $\mathbf{m}=(1,0)$ or $(0,1)$ and necessarily $\ell=1$. In either case, the pair $(\mathbf{m}, \ell)$ is critical.

\item Suppose $h(\mathbf{m})=2$. If $\mathbf{m}=(2,0)$ or $\mathbf{m}=(0,2)$, then $r(\mathbf{m})-1=3>1\geq \ell(2-\ell)$ and no admissible pair occurs. If $\mathbf{m}=(1,1)$, then $r(\mathbf{m})=2$ and admissibility forces $\ell=1$. It follows that $(\mathbf{m}, \ell)$ is again critical.
\end{itemize}
We have therefore obtained exactly three admissible pairs: $((1,0),1)$, $((0,1),1)$, $((1,1),1)$, and all of them are critical. Our
Theorem~\ref{thm::critical-pair-one-orbit} implies that only finitely
many isomorphism classes of mixed bricks occur in $\mod \Lambda$. It is then obvious that $\Lambda$ is brick-finite.

We finally determine the exact number of mixed bricks. The pair $((1,0),1)$ is realized uniquely: $N_{\mathbf{m}}=S_1$ and $\Gr_1\bigl(\Hom_B(E,S_1)\bigr) = \Gr_1(\Bbbk)$ consists of a single point. Since $\End_B(S_1)=\Bbbk$, the mixed module $(S_1, \Bbbk, \eta)$ with a linear map $0\neq \eta\colon \Bbbk \rightarrow \Bbbk$ is a brick. All such nonzero maps give isomorphic mixed bricks. Similarly, $((0,1),1)$ gives exactly one mixed brick
$(S_2,\Bbbk,\eta)$, where
$\eta\colon\Bbbk\rightarrow\Bbbk$ is nonzero.

It remains to consider $((1,1),1)$. In this case, $N_{\mathbf{m}}=S_1\oplus S_2$, $\Hom_B(E,N_{\mathbf{m}})\cong\Bbbk^2$ and $\End_B(N_{\mathbf{m}}) \cong \Bbbk\oplus\Bbbk$. Write a line in $\Bbbk^2$ as $L_{[a:b]}=\Bbbk(a,b)$ for some $[a:b]\in\mathbb{P}^1$. An endomorphism $(\lambda_1,\lambda_2)\in\Bbbk\oplus\Bbbk$
stabilizes $L_{[a:b]}$ precisely when $(\lambda_1a,\lambda_2b)\in\Bbbk(a,b)$. If $a=0$ or $b=0$, then the stabilizer is the whole algebra
$\Bbbk\oplus\Bbbk$, so the corresponding mixed module is not a
brick. If $ab\neq0$, then the stabilizer condition forces $\lambda_1=\lambda_2$. Consequently, $\Stab_{\End_B(N_{\mathbf{m}})}(L_{[a:b]}) = \Bbbk\operatorname{id}_{N_{\mathbf{m}}}$ if and only if $ab\neq0$. Moreover, $\Aut_B(N_{\mathbf{m}}) = \Bbbk^{\times}\times\Bbbk^{\times}$ acts transitively on the open subset $\{[a:b]\in\mathbb{P}^1\mid ab\neq0\}$. Hence, all scalar-stabilizer lines with $ab\neq0$ define a single isomorphism class of mixed bricks. 

It follows that the three admissible pairs are all realized and that each gives exactly one isomorphism class. Therefore, $\Lambda$ has exactly three isomorphism classes of mixed bricks. 

\ \\
\section*{Acknowledgements}
The author is supported by the National Natural Science Foundation of China No. 12401048 and the Fundamental Research Funds for the Central Universities No. DUT25RC(3)132.

\ \\

\bibliographystyle{alpha}
\bibliography{reference}
\ \\

\end{document}